\newtheorem*{acknowledgements}{Acknowledgements}
\newtheorem{theorem}{Theorem}
\newtheorem{corollary}[theorem]{Corollary}
\newtheorem{definition}{Definition}
\newtheorem{lemma}[theorem]{Lemma}
\newtheorem{proposition}[theorem]{Proposition}
\newtheorem{remark}{Remark}
\numberwithin{equation}{section}
\numberwithin{theorem}{section}
\let\oldsqrt\sqrt
\def\sqrt{\mathpalette\DHLhksqrt}
\def\DHLhksqrt#1#2{%
\setbox0=\hbox{$#1\oldsqrt{#2\,}$}\dimen0=\ht0
\advance\dimen0-0.2\ht0
\setbox2=\hbox{\vrule height\ht0 depth -\dimen0}%
{\box0\lower0.4pt\box2}}
\renewcommand{\Re}{\operatorname{Re}}
\renewcommand{\hat}[1]{\widehat{#1}}
\newcommand{\wt}[1]{\widetilde{#1}}
\newcommand{\rest}[1]{\big\rvert_{#1}} % restriction e.g. to boundary
\newcommand\lra{\longrightarrow}
\newcommand\pa{\partial}
\newcommand\cf{cf\@. }
\newcommand\eps\varepsilon
\renewcommand\epsilon\varepsilon
\newcommand\hc{\operatorname{hc}}
\newcommand\CI{{\mathcal{C}}^{\infty}}
\renewcommand\det{\operatorname{det}}
\newcommand\dvol{\operatorname{dvol}}
\newcommand\Id{\operatorname{Id}}
\renewcommand\Re{\operatorname{Re}}
\newcommand\Spec{\operatorname{Spec}}
\newcommand\supp{\operatorname{supp}}
\newcommand\Tr{\operatorname{Tr}}
\newcommand\Vol{\operatorname{Vol}}
\newcommand\dist{\operatorname{dist}}
\newcommand\Fcb{\operatorname{Fcb}}
\newcommand\pr{\operatorname{pr}}
\newcommand\horn{\operatorname{hc}}
\newcommand\HS{\operatorname{HS}}
\newcommand\Mforall{\text{ for all }}
\newcommand\Mif{\text{ if }}
\newcommand\Mor{\text{ or }}
\newcommand\Mwith{\text{ with }}
\newcommand\paperintro%
\newcommand\paperbody%
\newcommand\bbC{\mathbb{C}}
\newcommand\bbH{\mathbb{H}}
\newcommand\bbN{\mathbb{N}}
\newcommand\bbR{\mathbb{R}}
\newcommand\bbS{\mathbb{S}}
\newcommand\bbZ{\mathbb{Z}}
\newcommand\cS{\mathcal{S}}
\newcommand\cU{\mathcal{U}}
\newcommand\cV{\mathcal{V}}
\newcommand\cW{\mathcal{W}}
\newcommand\hW{\widehat{\cW}}
\newcommand\ty{\widetilde{y}}
\newcommand\td{\widetilde{d}}
\newcommand\tG{\widetilde{G}}
\DeclareMathAlphabet{\mathpzc}{OT1}{pzc}{m}{it}
\begin{document}

\title[Compactness via conformal surgeries]{Compactness of  relatively isospectral sets of surfaces via conformal surgeries}
\author{Pierre Albin}
\author{Clara L. Aldana}
\author{Fr\'ed\'eric Rochon}
\address{Department of Mathematics, University of Illinois at Urbana-Champaign}
\email{palbin@illinois.edu}
\address{Max Planck Institut f\"ur Gravitationsphysik (AEI)}
\email{clara.aldana@aei.mpg.de}
\address{Département de Mathématiques, UQÀM}
\email{rochon.frederic@uqam.ca }
\thanks{PA was partially funded by NSF grant DMS-1104533.\\ \indent
CA was partially funded by FCT project ptdc/mat/101007/2008. Registered at MPG, AEI-2012-059.\\ \indent
FR was supported by grant DP120102019}

\begin{abstract}
We introduce a notion of relative isospectrality for surfaces with boundary having possibly non-compact ends either conformally compact or asymptotic to cusps.  We obtain a compactness result for such families via a conformal surgery that allows us to reduce to the case of surfaces hyperbolic near infinity recently studied by Borthwick and Perry, or to the closed case by Osgood, Phillips and Sarnak if there are only cusps.
\end{abstract}

\maketitle

\tableofcontents

%%%%%%%%%%%%%%%%%%%%%%%%%%%%%%%%%%%%%%%%%%%
\paperintro
\section*{Introduction}
%%%%%%%%%%%%%%%%%%%%%%%%%%%%%%%%%%%%%%%%%%%

Despite the negative answer to Mark Kac's famous question
\begin{quote} Can one hear the shape of a drum? \end{quote}
there have been many interesting positive results.
The line most relevant to our current investigation starts with a seminal
paper by Melrose \cite{Melrose:Drumheads}.
For bounded planar domains, or `drumheads',  the geometry is encoded in
the geodesic curvature of the boundary and Melrose showed that
the short-time asymptotics of the trace of the heat kernel
determines this curvature to within a compact subset of the space
of smooth functions.

A stronger compactness result was obtained by Osgood, Phillips, and Sarnak
\cite{OPS1, OPS2, OPS3, OPS4} by making use of the determinant of the Laplacian,
in addition to the short-time asymptotics of the heat trace.
Indeed, they were able to show that on a given closed surface a family of
isospectral metrics is compact in the space of smooth metrics (and similarly
for planar domains). Their proof can be conveniently understood in terms of
the Cheeger compactness theorem:
a set of metrics with lower bounds on the injectivity radius and volume, and
uniform pointwise upper bounds on the curvature and its covariant derivatives is compact.
The short-time asymptotics of the heat trace determine the volume and upper bounds
on the Sobolev norms of the curvature \cite{Melrose:Drumheads, OPS2, Gilkey:Leading, Brooks-Glezen}
while for surfaces the determinant of the Laplacian gives a lower bound on the injectivity radius
\cite{OPS2}. The control on the injectivity radius and the Sobolev norms of the curvature can then be parlayed into uniform pointwise bounds on the curvature and its covariant derivatives.

There have been many extensions of the Osgood-Phillips-Sarnak results.  For instance, it is only recently that their result for planar domains has been extended to flat surfaces with boundary by Kim \cite{Kim2008}.
In higher dimensions there are results of Brooks-Perry-Yang \cite{Brooks-Perry-Yang}, Chang-Yang \cite{Chang-Yang}, and Chang-Qing \cite{Chang-Qing} studying compactness of isospectral metrics in a given conformal class, as well as results establishing compactness of isospectral metrics with an additional assumption on the curvature or the injectivity radius, e.g., by Anderson \cite{Anderson}, Brooks-Glezen \cite{Brooks-Glezen}, Brooks-Perry-Petersen \cite{Brooks-Perry-Petersen}, Chen-Xu \cite{Chen-Xu}, and Zhou \cite{Zhou}. For a well-written survey of positive and negative isospectral results we refer the reader to \cite{Gordon-Perry-Schueth}.

The first extension of these compactness results to non-compact surfaces
is the paper by Hassell and Zelditch \cite{Hassell-Zelditch}.
These authors consider exterior domains on $\bbR^2,$ i.e., each of their
surfaces is the exterior of a compact subset of $\bbR^2$ and they
consider the Laplacian with Dirichlet boundary conditions.
The spectrum of such a Laplacian is always equal to $[0,\infty),$
so they propose a replacement of the isospectrality condition,
isophasality. For manifolds that coincide with $\bbR^n$ outside a compact
set, or more generally for non-compact manifolds with an asymptotically
regular structure at infinity, one can define a scattering operator
$\lambda \mapsto \cS(\lambda)$
(see e.g., \cite{Melrose:Scattering})
and the scattering phase is defined to be $s(\lambda) = -i\log\det \cS(\lambda).$
In the context of exterior domains, the scattering phase is a natural replacement
for the counting function of the spectrum, and requiring that two manifolds have
the same scattering phase is a natural replacement for isospectrality.
Hassell and Zelditch show that a family of isophasal exterior domains is compact
in the space of smooth domains.

More recently, Borthwick and Perry \cite{Borthwick-Perry} consider non-compact
surfaces whose ends are hyperbolic funnels.
For these metrics the resolvent $R(s) = (\Delta - s(1-s))^{-1}$ admits a meromorphic
continuation to the whole complex plane \cite{Mazzeo-Melrose:Zero, Guillarmou:Mero, Guillope-Zworski}.
Two metrics whose resolvents have the same poles (with multiplicity) are called
isoresonant.
Borthwick and Perry prove that any set of isoresonant metrics that coincide,
and are hyperbolic funnel metrics, in a fixed
`neighborhood of infinity' form a $\CI$-compact subset in the space of metrics.
This generalizes earlier work of Borthwick, Judge, and Perry \cite{Borthwick-Judge-Perry:DetLap}.

One of the authors \cite{Aldana:Isoresonant} has shown that metrics that are conformally equivalent to a hyperbolic surface with cusps, with conformal factors supported in a fixed compact set, and mutually isoresonant form a $\CI$-compact set. Note that due to the vanishing of the injectivity radius, one cannot use Sobolev inequalities to transform Sobolev bounds on the curvature to uniform bounds. We shall face the same problem below (see, e.g., Lemma~\ref{del.1}).

The results in the noncompact setting are similar in the sense that the metrics are assumed to coincide outside of a compact set, but they differ in the definition of isospectrality.
We propose a notion of isospectrality for noncompact manifolds that unifies these approaches.
Let us say that two Riemannian manifolds $(M_1,g_1)$ and $(M_2,g_2)$
{\em coincide cocompactly} if there exist compact subsets $K_i \subseteq M_i\setminus \pa M_i$ and an isometry $M_1 \setminus K_1 \lra M_2 \setminus K_2.$
In this case, we say that the manifolds coincide on $\cU_\infty= M_i \setminus K_i.$
By embedding each $L^2(M_i,g_i)$ into
\begin{equation*}
    L^2(K_1, g_1) \oplus L^2(K_2, g_2) \oplus L^2(\cU_\infty, g_i),
\end{equation*}
we can consider the difference of the heat kernels
\begin{equation*}
    e^{-t\Delta_{g_1}} - e^{-t\Delta_{g_2}}.
\end{equation*}
Often this difference is trace-class even if the individual heat kernels are not.
Indeed, results of Bunke \cite{Bunke1992} and Carron \cite{Carron:DetRel} guarantee that for complete metrics the difference of heat kernels is trace-class.

\begin{definition}
Two Riemannian manifolds $(M_1,g_1)$ and $(M_2,g_2)$ are {\bf relatively isospectral} if they coincide on $M_i\setminus K_i$ with $K_i\subset M_i\setminus \pa M_i$ compact sets and if their relative heat trace is (defined and) identically zero,
\begin{equation}
    \Tr( e^{-t\Delta_1}- e^{-t\Delta_2} ) =0 \quad \Mforall t >0.
\label{RelIsoCond}\end{equation}
We also say that they are isospectral relative to $\cU_\infty$ to emphasize the neighborhood where they coincide.
\end{definition}

Notice that, on a closed manifold, two metrics are isospectral if and only if the traces of their heat kernels coincide for all positive time, motivating this definition.
Indeed asking that the spectrum of two Laplacians coincide with multiplicity
is the same as asking that the trace of their spectral measures coincide as
measures on the positive real line. Taking Laplace transforms this is equivalent
to asking that the trace of the heat kernels coincide.

%As with Definition \ref{def:Isospec}, this extends the notion of isospectrality from closed manifolds. Notice that when the renormalized traces of the heat kernels are defined, we have $\Tr( e^{-t\Delta_1}- e^{-t\Delta_2} ) =
%{}^R\Tr( e^{-t\Delta_1} ) - {}^R\Tr( e^{-t\Delta_2} ).$

If the manifolds are complete, then \eqref{RelIsoCond} can be interpreted as a condition on the
`Krein spectral shift function' $\xi$ of the pair $\Delta_1,$ $\Delta_2,$ as one has, e.g., \cite[Theorem 3.3]{Carron:DetRel}\footnote{The function $\xi$ is closely related to the `scattering phase', see \cite{Carron:DetRel}.}
\begin{equation}\label{RelKrein}
    \Tr( e^{-t\Delta_1}- e^{-t\Delta_2} ) =  - \int_{\bbR} \xi(\lambda) t e^{-t\lambda} \; d\lambda.
\end{equation}
Thus with mild regularity assumptions on $\xi,$ relatively isospectral complete metrics have $\xi \equiv 0,$ i.e., are isophasal.

As mentioned above, the metrics considered by Borthwick-Perry
are relatively isospectral, as are those considered by Hassell-Zelditch,
if one allows the compact sets $K_i$ to have boundary.
In the case of Borthwick-Perry this is a consequence of a Poisson formula
\cite[Theorem 2.1]{Borthwick-Perry}, due in this context to Borthwick,
that shows that the resonance set determines the (renormalized) trace of the
wave kernel, and hence the (renormalized) trace of the heat kernel.
Similarly, in \cite[(1.6)]{Hassell-Zelditch}, it is pointed out that the scattering phase determines
a renormalized trace of the heat kernel.
It is straightforward to write the relative trace of the heat kernels as the difference of their renormalized traces.

%This means relatively isospectral\clara{I think "complete" does not make sense here} $\Fcb$-metrics have $\xi \equiv 0,$ i.e., are isophasal.
%We also point out that the metrics considered by Borthwick and Perry
%are relatively isospectral.  This is is a consequence of a Poisson formula
%\cite[Theorem 2.1]{Borthwick-Perry}, due in this context to Borthwick,
%that shows that the resonance set determines the (renormalized) trace of the
%wave kernel, and hence the (renormalized) trace of the heat kernel.

Let us now introduce the type of Riemannian metrics considered here.
Let $\overline{M}$ be a smooth compact surface with boundary $\pa \overline{M}$ with $n$ marked points $p_1,\ldots, p_n\in \overline{M}\setminus\pa \overline{M}$.  We suppose $\pa \overline{M}$ is the disjoint union of two types of boundaries, $\pa_F\overline{M}$ and $\pa_b\overline{M}$, each of which being a finite union of circles.  Consider then the possibly non-compact surface $M= \overline{M}\setminus(\pa_F\overline{M}\cup\{p_1,\ldots,p_n\})$.

\begin{definition}
Let $M$ be as described above. A \textbf{Funnel-cusp-boundary metric} ($\Fcb$-metric for short)
is a Riemannian metric $g$ on $M$ such that:
\begin{itemize}
\item[(i)] there exist a collar neighborhood
$$
   c_F: \pa_F\overline{M}\times [0,\nu)_x\to \overline{M},
$$
a Riemannian metric $h_F$ on $\pa_F\overline{M}$ and  a smooth function $\varphi_F\in \CI(\pa_F\overline{M}\times [0,\nu)_x)$  locally constant on $\pa_F\overline{M}\times \{0\}$  such that
$$
      c_F^*g= e^{\varphi_F}\left(\frac{dx^2+ \pr_F^*h_F}{x^2}\right),
$$
where $\pr_{F}$ is the natural projection onto $\pa_F\overline{M}$;
\item[(ii)] For each marked point $p_i$,  there exist a neighborhood $\cV_i\subset \overline{M}$, coordinates $u,v$ on $\cV_i$ with $u(p_i)=v(p_i)=0$, and a function $\varphi_i\in\CI(\overline{M})$
such that near $p_i$,
$$
        g= e^{\varphi_i}\left( \frac{du^2+dv^2}{r^2(\log r)^2} \right),
$$
where $r= \sqrt{u^2+v^2}$.
\end{itemize}
In other words, the metric is asymptotically hyperbolic near $\pa_F\overline{M}$ while it is conformal to a cusp near each marked point $p_i$.  We say $(M,g)$ is a $\Fcb$-Riemannian surface.
\label{Fcb.1}\end{definition}

%%%%%%%%%%%%%%%%%%%%%%%%%%%%%%%%%%%%%%%%%%%%%%%
\begin{figure}[h]
\centerline{\includegraphics[height=5cm]{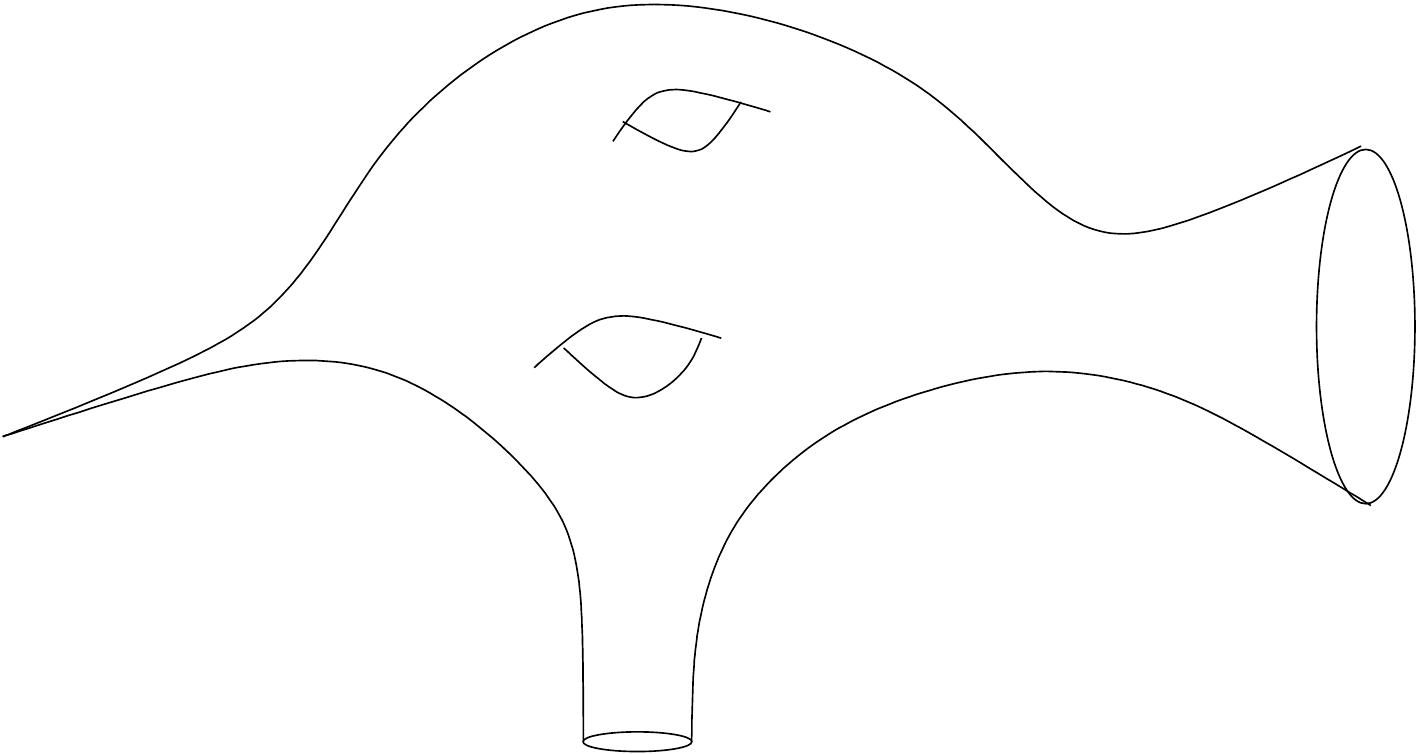}}
\caption{$(M,g)$ a $\Fcb$-surface} \label{fig1}
\end{figure}
%%%%%%%%%%%%%%%%%%%%%%%%%%%%%%%%%%%%%%%%%%%%%%

For such a $\Fcb$-metric $g$ on $M$, we will consider the corresponding (positive) Laplacian $\Delta_g$ with Dirichlet boundary condition on $\pa M:= \pa_b\overline{M}$.  When $\pa M=\emptyset$, $\Fcb$-metrics correspond to some of the $F-\hc$ metrics considered in \cite{AAR}.  If $\pa_F\overline{M}=\emptyset$ and $\overline{M}$ has no marked points, the spectrum of $\Delta_g$ is discrete.  Otherwise, each boundary component $Y$ of $\pa_F\overline{M}$  gives rise to a band of continuous spectrum $[\frac{e^{-c_Y}}{4},\infty)$ (of infinite multiplicity), where the constant $c_Y$ is the restriction of $\varphi_F$ %or $\varphi_c$ 
to $Y$, while each marked point $p_i$ gives rise to a band of continuous spectrum $[\frac{e^{-\varphi_i(p_i)}}{4}, \infty)$ (of multiplicity one). In particular, the continuous spectrum is bounded below by a positive constant.  Notice also that $0$ is in the spectrum if and only if $\pa_F\overline{M}=\pa_b\overline{M}=\emptyset$.

The main result of this paper is to establish compactness for sets of relatively isospectral $\Fcb$-Riemannian surfaces.  More precisely, we prove the following theorem.
\begin{theorem}\label{thm:RelIso}
Let $(M_i,g_i)$ be a sequence of $\Fcb$-Riemannian surfaces, isospectral relative to $\cU_{\infty}$.
Then there is a Riemannian surface $(M, g_{\infty}),$ a subsequence $(M_{i_k},g_{i_k}),$ and a sequence of diffeomorphisms
\begin{equation*}
    \phi_k: M \lra M_{i_k},
    \Mwith \phi_{\ell}\circ \phi_{\ell'}^{-1}\rest{\cU_{\infty}} = \Id \text{ for any } \ell, \ell'
\end{equation*}
such that the metrics $\phi_k^*g_{i_k}$ converge to $g_{\infty}$ in $\CI.$
\end{theorem}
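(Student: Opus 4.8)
The strategy is to reduce, via a conformal surgery on the non-compact ends, to one of the two compactness theorems already available: the Borthwick--Perry theorem for surfaces hyperbolic near infinity, or the Osgood--Phillips--Sarnak theorem for closed surfaces (when there are only cusps). The first task is to record the invariants of the family that follow from relative isospectrality. From the condition $\Tr(e^{-t\Delta_1}-e^{-t\Delta_2})=0$ for all $t>0$ one extracts, via the short-time asymptotic expansion of the relative heat trace, equality of the relative heat invariants. In dimension two these control the total Gauss curvature, the area (relative to the model), and the Sobolev norms of the curvature on the compact piece where the metrics are allowed to vary; this is the $\Fcb$-analogue of the computations in \cite{Melrose:Drumheads, OPS2, Gilkey:Leading, Brooks-Glezen}. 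The relative determinant of the Laplacian (or, equivalently under relative isospectrality, a fixed number) then plays the role it plays in \cite{OPS2} and \cite{Borthwick-Judge-Perry:DetLap, Borthwick-Perry}: combined with a Polyakov-type formula in the conformal class it yields a lower bound on the injectivity radius away from the cusps and the funnels.

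\smallskip

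The heart of the argument is the conformal surgery. Near each funnel end an $\Fcb$-metric is $e^{\varphi_F}(dx^2 + \pr_F^* h_F)/x^2$, and near each marked point it is $e^{\varphi_i}(du^2+dv^2)/(r^2(\log r)^2)$; in both cases the metric is a prescribed conformal multiple of a fixed model metric. The plan is to interpolate the conformal factor $\varphi_F$ (respectively $\varphi_i$) to a constant as one moves out the end, producing for each $i$ a new metric $\widetilde g_i$ that agrees with $g_i$ on a slightly larger compact set, is exactly a hyperbolic funnel (resp. an exact hyperbolic cusp) near infinity, and lies in a bounded family of conformal modifications of $g_i$. One must check that this surgery (a) preserves the property of being in a fixed ``neighborhood of infinity'' in the sense required by Borthwick--Perry --- this is why the surgery is performed uniformly in $i$, using that the $g_i$ coincide on $\cU_\infty$; (b) changes the relevant spectral/heat data in a controlled, $i$-independent way, so that relative isospectrality of the $g_i$ transfers to a suitable ``bounded relative spectral data'' statement for the $\widetilde g_i$; and (c) when all ends are cusps, can instead be used to cap off or double the surface to a genuine closed surface, reducing to \cite{OPS1, OPS2}. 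Step (b) is where the relative heat trace formalism, the embedding into $L^2(K_1)\oplus L^2(K_2)\oplus L^2(\cU_\infty)$, and the trace-class results of Bunke \cite{Bunke1992} and Carron \cite{Carron:DetRel} are needed.

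\smallskip

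Once the surgery is in place one invokes the appropriate external compactness theorem to extract a subsequence $\widetilde g_{i_k}$ and diffeomorphisms $\phi_k$ with $\phi_k^*\widetilde g_{i_k}\to \widetilde g_\infty$ in $\CI$, with $\phi_\ell\circ\phi_{\ell'}^{-1}=\Id$ on $\cU_\infty$ since the metrics were arranged to coincide there. It then remains to undo the surgery: on the compact region the surgery was the identity, so $\phi_k^* g_{i_k}$ and $\phi_k^*\widetilde g_{i_k}$ differ only on the ends by the interpolated conformal factors, which converge because the underlying spectral data that pinned them down converge. Passing to a further subsequence if necessary, one obtains $\phi_k^* g_{i_k}\to g_\infty$ in $\CI$ on all of $M$.

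\smallskip

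\textbf{Main obstacle.} The delicate point, already flagged in the text near Lemma~\ref{del.1}, is the breakdown of Sobolev embedding where the injectivity radius degenerates --- precisely at the cusps. Controlling the conformal factor, its derivatives, and hence the geometry uniformly in a neighborhood of each marked point, so that the surgery is genuinely uniform in $i$ and the limit metric is again an honest $\Fcb$-metric with a cusp (rather than, say, developing an orbifold point or a collapsing neck), is the step I expect to require the most work; it is presumably handled by the conformal-geometry estimates of \cite{Aldana:Isoresonant} adapted to the mixed funnel-cusp-boundary setting, together with the elliptic regularity needed to upgrade bounds on curvature and injectivity radius to $\CI$ convergence.
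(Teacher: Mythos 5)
Your high-level strategy coincides with the paper's, but there are two genuine gaps in the way you've set up the surgery and in the key technical invariance that makes the reduction work.

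First, your description of the surgery at a cusp is wrong. You propose to interpolate $\varphi_i$ to a constant so that the end becomes an \emph{exact} hyperbolic cusp; but Borthwick--Perry's theorem requires all ends to be hyperbolic funnels, so retaining cusps gets you nowhere. The surgery the paper performs at a marked point (the conformal factor $\psi(\epsilon,r)$ of \eqref{fpsa.1}, with $\epsilon>0$) \emph{fills in} the puncture, converting each cusp end into a smooth disc with an interior point. Similarly, each Dirichlet boundary component is opened up into a funnel via \eqref{fpsa.1b}. Only after all cusps are filled and all boundaries are funnelled does one land in the hypotheses of Theorem~\ref{ExtendedBP} (or, when there are no funnels or boundaries, in the closed case of Osgood--Phillips--Sarnak). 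You also skip the preliminary step of showing all the $M_i$ are diffeomorphic: the paper does this by a surgery that makes the $\hat g_i$ flat-ended metrics on compact surfaces with boundary and then reads off the Euler characteristic from the McKean--Singer heat invariant.

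Second, and more seriously, you do not identify the actual mechanism by which relative isospectrality survives the surgery. You speak of the surgery ``changing spectral/heat data in a controlled, $i$-independent way'' yielding a ``bounded relative spectral data statement'' -- but the restated Borthwick--Perry theorem (Theorem~\ref{ExtendedBP}) needs exact equalities: $a_k(g_i,g_j)=0$ and $\det(\Delta_{g_i},\Delta_{g_j})=1$. The nontrivial content of the paper, occupying Sections~2--5, is precisely Theorem~\ref{csp.3}: the relative determinant $\det(\Delta_{g_\epsilon},\Delta_{h_\epsilon})$ is \emph{independent of $\epsilon$}. The argument is: for $\epsilon>0$ the $\epsilon$-derivative vanishes by the Polyakov formula, because the conformal factor $\psi_\epsilon$ is the same for $g$ and $h$ and supported where $g=h$ (Lemma~\ref{const.1}); and then one proves continuity of the relative determinant at $\epsilon=0$. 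That continuity is where the real work lies -- uniform-in-$\epsilon$ trace-class bounds (Proposition~\ref{utn.1}), uniform exponential decay for large $t$ from the spectral-gap lower bound (Theorem~\ref{csp.1} and Corollary~\ref{ed.1}), and uniform continuity in $t$ for small $t$ via Donnelly-style finite-propagation-speed estimates (Theorem~\ref{fpsa.3}, Lemmas~\ref{sta.1}--\ref{fpas.8}). The Sobolev-at-cusps obstacle you correctly flag is met there, not in ``controlling the conformal factor'': it is Lemma~\ref{del.1}, a quantitative Sobolev estimate on the hyperbolic cusp that tracks how the Hilbert--Schmidt-to-$L^\infty$ constant grows as the region $G$ slides out to infinity, allowing the $e^{-d^2/8t}$ factor in the finite-speed estimates to beat the Sobolev constant. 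Without isolating this invariance-of-the-relative-determinant theorem and its proof, the reduction to Borthwick--Perry or Osgood--Phillips--Sarnak is not actually carried out.
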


The proof of Theorem \ref{thm:RelIso} consists in reducing to the case treated by Borthwick and Perry (or by Osgood, Phillips and Sarnak if there are only cusps) via a conformal surgery.  Namely, using the fact that a cusp is conformal to a punctured disk, we conformally modify the metrics near each cusp to obtain punctured disks that we fill in.  Similarly, near each boundary component of $M$, we modify the metric conformally from an incomplete cylinder to a complete funnel hyperbolic near infinity.  Doing another conformal transformation, we can assume that these metrics are also hyperbolic at the other funnel ends.  Under these conformal surgeries, the new metrics will generally no longer be relatively isospectral.  However, since each metric undergoes the same conformal transformation in a region where all the metrics were the same, the relative local heat invariants stay the same.  From the local nature of Polyakov's formula for the variation of the determinant, one can also hope that the relative determinant will remain unchanged.  Using a finite speed propagation argument, we are able to show that this is indeed the case -- even though for a conformal surgery at a point, the deformation does not have uniformly bounded curvature nor global Sobolev inequalities. This reduces the problem to the situations treated in \cite{OPS2, Borthwick-Perry}, from which the compactness of the set of relatively isospectral $\Fcb$-metrics follows.

\begin{acknowledgements}
The authors are grateful to David Borthwick, Gilles Carron, Andrew Hassell, Rafe Mazzeo and Richard Melrose for helpful conversations. 
\end{acknowledgements}

\paperbody

%%%%%%%%%%%%%%%%%%%%%%%%%%%%%%%%%%%%%%
%%%%%%%%%%%%%%%%%%%%%%%%%%%%%%%%%%%%%%
\section{Conformal surgeries}\label{fpsa.0}
%%%%%%%%%%%%%%%%%%%%%%%%%%%%%%%%%%%%%%
%%%%%%%%%%%%%%%%%%%%%%%%%%%%%%%%%%%%%%

Let $(M,g)$ be a $\Fcb$-Riemannian surface.  
  Fix a marked point $p\in \overline{M}\setminus \pa \overline{M}$ and choose coordinates $u$ and $v$ in a neighborhood $\cV$ of $p$ such that $u(p)=v(p)=0$ and such that in these coordinates the metric takes the form
$$
               g= e^{f}\frac{du^2+dv^2}{r^2(\log r)^2}=e^{f}\frac{dr^2 + r^2 d\theta^2}{r^2(\log r)^2}= e^f\left(\frac{d\rho^2}{\rho^2}+ \rho^2 d\theta^2\right),  \quad f\in\CI(\cV),
$$
where we are using the polar coordinates $u=r\cos \theta$, $v=r\sin\theta$ and $\rho= \frac{-1}{\log r}$.Without loss of generality, we can assume that the neighborhood $\cV$ is given by the open disk of radius $r=\frac45$.  Let $\sigma\in \CI_c(\cV)$ be a function taking values between $0$ and $1$ with $\sigma\equiv 1$ for $r<\frac14$ and $\sigma\equiv 0$ for $r>\frac12$.  Consider then the function $\psi(\eps,r)$ given by
\begin{equation}
  \psi(\epsilon,r)= \sigma(r)\left( \frac{r^2(\log r)^2}{\epsilon^2+(\epsilon^2+r^2)(\log \sqrt{r^2+\epsilon^2})^2} \right) +(1-\sigma(r))
\label{fpsa.1}\end{equation}
Extending $\psi_{\eps}(r)=\psi(\eps,r)$ by 1 outside the neighborhood $\cV$ allows us to consider the family of metrics
\[
      g_{\eps}= \psi_{\eps}g.
\]
For $\epsilon>0$, this metric is smooth at the marked point $p$.  However,
as $\eps$ approaches $0$, the point $p$ is pushed to infinity so that in the limit we recover the metric $g$.
We say the metric $g$ undergoes a \textbf{conformal surgery at the marked point $p$}.

\begin{figure}[h]
\setlength{\unitlength}{1.5cm}
\begin{picture}(9,3)
\thicklines

\qbezier(0,1)(2,1)(2,1.5)
\qbezier(0,1)(2,1)(2,0.5)
\put(0.7,0){$\epsilon=0$}

\qbezier(3.5,1.2)(5,1.2)(5,1.5)
\qbezier(3.5,0.8)(5,0.8)(5,0.5)
\qbezier(3.5,0.8)(3,0.8)(3,1)
\qbezier(3.5,1.2)(3,1.2)(3,1)
\put(3,1){\circle*{0.1}}
\put(2.7,1){$p$}
\put(3.7,0){$\epsilon=\frac12$}

\qbezier(7.2,1.5)(6.7,1)(7.2,0.5)
\put(6.95,1){\circle*{0.1}}
\put(6.7,1){$p$}
\put(7,0){$\epsilon=1$}

\thinlines
\qbezier(2,1.5)(1.9,1)(2,0.5)
\qbezier(2,1.5)(2.1,1)(2,0.5)

\qbezier(5,1.5)(4.9,1)(5,0.5)
\qbezier(5,1.5)(5.1,1)(5,0.5)

\qbezier(7.2,1.5)(7.1,1)(7.2,0.5)
\qbezier(7.2,1.5)(7.3,1)(7.2,0.5)

\end{picture}
\caption{Conformal surgery at a point p}\label{csap}
\end{figure}
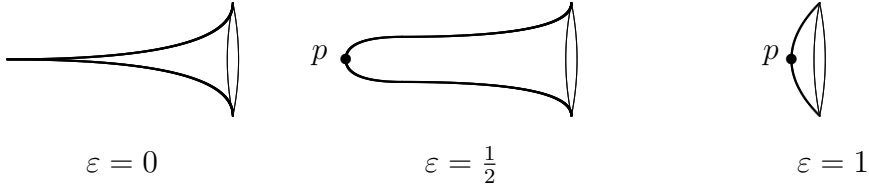

Similarly, suppose now that $\pa M$ is nonempty and fix a boundary component $\pa_i M$.  Instead of a conformal surgery at $p_i$,
we can consider one at $\pa_i M$.  Thus, we take now $\cV\cong \pa_i M\times [0,1)_r$ to be a collar neighborhood of  $\pa_i M$ in $M$
and let $\theta$ be the angular variable on $\pa_i M\cong \bbS^1$.  Near the boundary, the metric is of the form
$$
       g=  e^{f}(dr^2 + d\theta^2),   \quad f\in\CI(\cV).
$$

In $\cV$, consider a function $\psi(\eps,r)$ such that
\begin{equation}
\begin{gathered}
    (\eps^2+r^2) \psi(\eps,r) \in \CI(\cV ),\\
     \psi(\eps, r) = 1 \Mif \eps >1/2 \Mor r > 1/2,\\
    (\eps^2+r^2) \psi(\eps, r) = e^{w} \Mif \eps^2+r^2 \approx 0,
\end{gathered}
\label{fpsa.1b}\end{equation}
where $w\in \CI(\cV)$ is such that $w+f$ is constant on $\pa_iM$.  For instance, we can take $w=-f$.
Extending $\psi_{\eps}(r)=\psi(\eps,r)$ by 1 outside the neighborhood $\cV$ allows us to consider the family of metrics
\[
      g_{\eps}= \psi_{\eps}g.
\]
In this case, the boundary component $\pa_i M$ is pushed to infinity as $\epsilon$ approaches zero, so that in the limit the metric becomes asymptotically hyperbolic in that end,
$$
   g_0=     e^{w+f}\frac{dr^2+d\theta^2}{r^2}, \quad r \ \mbox{small.}
$$
If we pick $w=-f$, then $g_0$ is in fact hyperbolic near infinity.
We think of this family of metrics as a \textbf{conformal surgery at $\pa_i M$}.

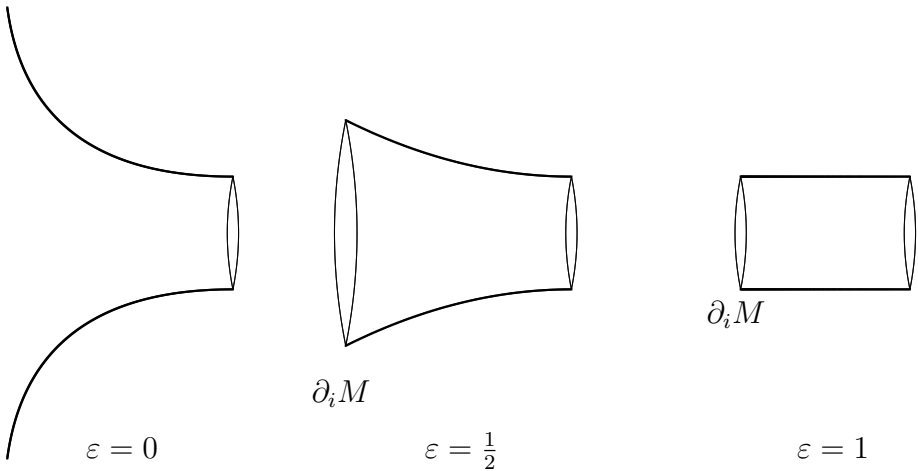
\begin{figure}[h]
\setlength{\unitlength}{1.5cm}
\begin{picture}(9,4)
\thicklines

\qbezier(0,4)(0.2,2.5)(2,2.5)
\qbezier(0,0)(0.2,1.5)(2,1.5)
\put(0.7,0){$\epsilon=0$}

\qbezier(3,3)(4,2.5)(5,2.5)
\qbezier(3,1)(4,1.5)(5,1.5)
\put(2.7,0.5){$\pa_i M$}
\put(3.7,0){$\epsilon=\frac12$}

\qbezier(6.5,1.5)(7,1.5)(8,1.5)
\qbezier(6.5,2.5)(7,2.5)(8,2.5)
\put(6.2,1.2){$\pa_i M$}
\put(7,0){$\epsilon=1$}

\thinlines
\qbezier(2,1.5)(1.9,2)(2,2.5)
\qbezier(2,1.5)(2.1,2)(2,2.5)

\qbezier(5,1.5)(4.9,2)(5,2.5)
\qbezier(5,1.5)(5.1,2)(5,2.5)
\qbezier(3,1)(2.8,2)(3,3)
\qbezier(3,1)(3.2,2)(3,3)

\qbezier(8,1.5)(7.9,2)(8,2.5)
\qbezier(8,1.5)(8.1,2)(8,2.5)
\qbezier(6.5,1.5)(6.4,2)(6.5,2.5)
\qbezier(6.5,1.5)(6.6,2)(6.5,2.5)

\end{picture}
\caption{Conformal surgery at a boundary component $\pa_i M$}\label{csabc}
\end{figure}

To study the determinant, it will be important for us to know that the spectra stay away from zero under conformal surgeries at a point
or at a boundary component.

\begin{theorem}
Let $(M,g)$ be a $\Fcb$-Riemannian surface.   Let the family $g_{\epsilon}$ be  a conformal surgery at point $p$ or at boundary component $\pa_i M$.  Then the smallest non-zero eigenvalue of $\Delta_{g_{\epsilon}}$ is bounded below by a constant $c>0$ independent of $\epsilon$.
\label{csp.1}\end{theorem}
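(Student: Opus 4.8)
The plan is to reduce everything, via the conformal invariance of the Dirichlet integral in dimension two, to a Poincar\'e inequality with constants uniform in $\epsilon$. Indeed $\mathcal{E}_g(u):=\int_M|\nabla_g u|_g^2\,\dvol_g$ is unchanged under $g\mapsto\psi_\epsilon g$, while $\dvol_{g_\epsilon}=\psi_\epsilon\,\dvol_g$; since a point, or a circle carrying a Dirichlet condition, has zero capacity in two dimensions, the form domains of $\Delta_{g_\epsilon}$ on the surgered surface $\hat M_\epsilon$ and of $\mathcal{E}_g$ on $L^2(M,\mu_\epsilon)$, $\mu_\epsilon:=\psi_\epsilon\,\dvol_g$ (with Dirichlet conditions on $\pa_b\overline M$), agree, so $\lambda_1(\Delta_{g_\epsilon})=\inf\bigl\{\,\mathcal{E}_g(u)/\|u\|_{L^2(\mu_\epsilon)}^2\ :\ u\perp_{\mu_\epsilon}\ker\Delta_{g_\epsilon}\,\bigr\}$. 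Because the ends of $(M,g)$ are untouched, the essential spectrum of $\Delta_{g_\epsilon}$ stays in $[\Lambda_0,\infty)$ with $\Lambda_0>0$ independent of $\epsilon$, and $0\in\spec\Delta_{g_\epsilon}$ only when $(\hat M_\epsilon,g_\epsilon)$ has finite volume and empty Dirichlet boundary, in which case $\ker\Delta_{g_\epsilon}$ is the constants. For $\epsilon$ in a compact parameter subinterval the $g_\epsilon$ form a smooth family on a fixed surface, so $\lambda_1$ is continuous and positive there; thus it suffices to get a bound uniform as $\epsilon\to0$, i.e.\ to prove $\|u-u_B\|_{L^2(\mu_\epsilon)}^2\le C\,\mathcal{E}_g(u)$ with $C$ uniform in small $\epsilon$, where $u_B$ is the $\mu_\epsilon$-average of $u$ over a fixed region $B$ straddling $\pa\Omega_\epsilon$ (on which $\psi_\epsilon$ is bounded above and below uniformly).

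Write $\hat M_\epsilon=\Omega_\epsilon\cup N$, where $\Omega_\epsilon$ is the surgery region --- the disk $\{r<\tfrac12\}\cup\{p\}$ in the point case, the collar $\{r\le\tfrac12\}$ carrying the Dirichlet circle $\pa_iM=\{r=0\}$ in the boundary case --- and $N=\hat M_\epsilon\setminus\Omega_\epsilon$, on which $\psi_\epsilon\equiv1$ so that $(N,g_\epsilon)=(N,g)$ is $\epsilon$-independent. On $N$ the desired inequality is an $\epsilon$-free statement: since $(M,g)$ is an $\Fcb$-surface, $\Delta_g$ has a positive gap above the bottom of its spectrum, which together with the $\tfrac14$-threshold of the model cusp and funnel ends and a fixed trace inequality yields $\int_N|u-u_{B'}|^2\,\dvol_g\le C_N\,\mathcal{E}_g(u|_N)$ for a fixed ball $B'\subset N$ (and the stronger bound without $u_{B'}$ when $(M,g)$ has infinite volume or $N$ contains a Dirichlet boundary component).

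The heart of the matter is the corresponding uniform bound on $\Omega_\epsilon$: $\int_{\Omega_\epsilon}|u-u_B|^2\,d\mu_\epsilon\le C_\Omega\,\mathcal{E}_g(u|_{\Omega_\epsilon})$ for all small $\epsilon$, and $\int_{\Omega_\epsilon}|u|^2\,d\mu_\epsilon\le C_\Omega\,\mathcal{E}_g(u|_{\Omega_\epsilon})$ when $u|_{\pa_iM}=0$. Here $(\Omega_\epsilon,g_\epsilon)$ splits into a ``terminal'' piece and a long ``neck''. The terminal piece is a flat disk of radius $O(1/\log\tfrac1\epsilon)$ around $p$ (point case), whose Poincar\'e constant tends to $0$; or a cylinder near $\pa_iM$ of bounded $g_\epsilon$-radial extent carrying a Dirichlet condition (boundary case), whose Poincar\'e constant is $O(1)$. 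The neck is a truncation of the original cusp of $g$, respectively of a hyperbolic funnel; although its $g_\epsilon$-length diverges as $\epsilon\to0$, decomposing into Fourier modes in the circle variable shows that its zero mode is, after conjugation by the square-root of the volume density, unitarily equivalent to $-\pa_t^2+\tfrac14$ on an interval or half-line with the natural boundary conditions --- which has no spectrum in $(0,\tfrac14)$ whatever the length --- while the nonzero modes feel a potential tending to $+\infty$; hence the Laplacian of the neck has spectrum $\{0\}\cup[c,\infty)$ with $c>0$ independent of $\epsilon$. A standard patching argument across the uniformly fat overlap of neck and terminal piece gives the Poincar\'e inequality on $\Omega_\epsilon$, the Dirichlet datum on the terminal piece moreover killing the zero mode in the boundary case. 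Combining with the estimate on $N$ --- $B$ and $B'$ being joined through $\epsilon$-independent geometry, so a fixed trace inequality bounds $|u_B-u_{B'}|$ by $\mathcal{E}_g(u)^{1/2}$, and using $\Vol(N,g)<\infty$ or the stronger estimates above --- produces $\|u-u_B\|_{L^2(\mu_\epsilon)}^2\le C\,\mathcal{E}_g(u)$ (resp.\ $\|u\|_{L^2(\mu_\epsilon)}^2\le C\,\mathcal{E}_g(u)$ when $0\notin\spec\Delta_{g_\epsilon}$) uniformly in small $\epsilon$. When $0\notin\spec\Delta_{g_\epsilon}$ this gives $\lambda_1(\Delta_{g_\epsilon})\ge 1/C$ at once; when $0\in\spec\Delta_{g_\epsilon}$, for $u\perp_{\mu_\epsilon}1$ one has $\|u\|_{L^2(\mu_\epsilon)}^2=\min_{a}\|u-a\|_{L^2(\mu_\epsilon)}^2\le\|u-u_B\|_{L^2(\mu_\epsilon)}^2\le C\,\mathcal{E}_g(u)$, so again $\lambda_1(\Delta_{g_\epsilon})\ge 1/C$; with the first paragraph this proves the theorem.

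The one genuinely delicate point is the uniform neck estimate above. Naively a neck of diverging length ought, like a long straight cylinder, to carry eigenvalues tending to $0$; what rescues us is that these necks have cross-sections of shrinking length, so the relevant one-dimensional operator is not $-\pa_t^2$ but $-\pa_t^2+\tfrac14$, and the constant $\tfrac14$ excludes small eigenvalues uniformly in the length.
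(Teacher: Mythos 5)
Your plan is viable and shares with the paper only its opening move --- the conformal invariance of $\int_M |du|_g^2\,\dvol_g$ in dimension two --- but after that the two arguments diverge completely. The paper never decomposes the surface: it observes the one-sided volume comparison $\dvol_{g_\epsilon}\le C\,\dvol_{g_0}$ uniformly in $\epsilon$, notes that an eigenfunction of $\Delta_{g_\epsilon}$ (smooth up to $p$ for $\epsilon>0$, resp.\ vanishing on $\pa_i M$) is an admissible test function for $\Delta_{g_0}$, and concludes that every Rayleigh quotient for $g_\epsilon$ dominates $1/C$ times the corresponding quotient for $g_0$; min--max (over $2$-dimensional subspaces of $\CI_c(M)$ when $0$ is an eigenvalue) then gives $\lambda_\epsilon\ge\lambda_0/C$, where the positivity of $\lambda_0=\inf(\Spec(\Delta_{g_0})\setminus\{0\})$ is simply quoted from the known description of the spectrum of $\Fcb$-metrics. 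You instead reprove that spectral gap from scratch, uniformly in $\epsilon$, via a neck-plus-terminal-piece decomposition and a patching argument. What your route buys is a self-contained, in principle quantitative, Poincar\'e inequality, and your identification of the mechanism --- shrinking cross-sections turn the long neck's zero mode into $-\pa_t^2+\tfrac14$ rather than $-\pa_t^2$, so length divergence is harmless --- is exactly right. What it costs is that the two places you wave at ``standard'' arguments are precisely where the work lives: the conjugation $v=e^{\mp t/2}u$ produces $-\pa_t^2+\tfrac14$ with \emph{Robin}, not Neumann, conditions, whose boundary terms have mixed sign, so ``no spectrum in $(0,\tfrac14)$ whatever the length'' is true but requires the short ODE computation rather than a form estimate; and the patching must be carried out across an overlap annulus whose injectivity radius and cross-sectional length degenerate as $\epsilon\to0$ (it works because the overlap's measure stays comparable to that of the terminal piece, but this should be said). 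Two small corrections: a circle does \emph{not} have zero capacity in two dimensions --- you do not need that claim, since in the boundary case the form domain is simply $H^1_0$ and no removability argument is involved; and the uniform two-sided comparability of $\psi_\epsilon$ that you invoke only on $B$ is, used globally and one-sidedly against $g_0$, already the whole of the paper's three-line proof.
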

\begin{proof}
Suppose first that $g_{\epsilon}$ is a conformal surgery at a point $p$.
 Let $\lambda_{\epsilon}>0$ be the smallest positive eigenvalue of $\Delta_{g_{\epsilon}}$ and let $u\in \CI(M)\cap L^2(M,g_{\epsilon})$ be the corresponding eigenfunction, so that $\Delta_{g_{\epsilon}}u=\lambda_{\epsilon} u$.  When $\epsilon>0$, notice that $u\in \CI(\overline{M})$,  so its restriction to $M\setminus\{p\}$ will be in $L^2(M,g_0)$, in fact, in the domain of $\Delta_{g_0}$.  Now, we can find a constant $C>0$ such that
$$
                \dvol_{g_{\epsilon}}\le C \dvol_{g_0},  \quad \forall \, \epsilon \ge0.
$$
On the other hand, since our change of metric is conformal, $\| du \|^2_{L^2(M,g_{\epsilon})}= \| du \|^2_{L^2(M,g_{0})}$.   Thus, if zero is not in the spectrum, that is, if $(M,g)$ has at least one funnel end or if it has a non-empty boundary, we have that
$$
  \lambda_{\epsilon} =   \frac{\| du \|^2_{L^2(M,g_{\epsilon})}}{\| u \|^2_{L^2(M,g_{\epsilon})}} \ge
  \frac{1}{C} \frac{\| du \|^2_{L^2(M,g_{0})}}{\| u \|^2_{L^2(M,g_{0})}}\ge \frac{\lambda_0}{C},
$$
where $\lambda_0= \inf\Spec(\Delta_0)$.  It therefore suffices to take $c=   \frac{\lambda_0}{C}$ in this case.  If instead $(M,g)$ has an empty boundary and no funnel end, we know that zero is an eigenvalue.  Thus, in this case, we have instead
$$
 \lambda_{\epsilon}=  \inf_{\Pi} \max_{v\in\Pi} \frac{\| dv \|^2_{L^2(M,g_{\epsilon})}}{\| v \|^2_{L^2(M,g_{\epsilon})}},
$$
where $\Pi$ runs over 2-dimensional subspaces of $\CI_c(M)$.  Since $\CI_c(M)$ is densely contained in the domain of $\Delta_{g_0}$, we have again
$$
 \lambda_{\epsilon}=  \inf_{\Pi} \max_{v\in\Pi} \frac{\| dv \|^2_{L^2(M,g_{\epsilon})}}{\| v \|^2_{L^2(M,g_{\epsilon})}} \ge    \frac{1}{C} \inf_{\Pi}\max_{v\in\Pi} \frac{\| dv \|^2_{L^2(M,g_{0})}}{\| v \|^2_{L^2(M,g_{0})}}\ge \frac{\lambda_0}{C},
 $$
 where $\lambda_0= \inf (\Spec(\Delta_0)\setminus \{0\})$,
so that we can still take $c=\frac{\lambda_0}{C}$ to obtain the result.

If instead $g_{\epsilon}$ is a conformal surgery at a boundary component $\pa_i M$, notice that  if $u\in\CI(M)$ is an eigenfunction of $\Delta_{g_{\epsilon}}$ for $\epsilon>0$, then, since $u$ is zero on $\pa_i M$, it is in $L^2(M,g_0)$, in fact in the domain of $\Delta_{g_0}$.  On the other hand, we can clearly find a constant $C>0$ such that
$$
      \dvol_{g_{\epsilon}}\le C \dvol_{g_0}  \quad \forall \epsilon\in [0,1].
$$
With these two facts, we can proceed as before to obtain the result.
\end{proof}

\section{$L^2$-estimates for the heat kernel}

In \cite{CGT1982}, some $L^2$ estimates are obtained for the heat kernel using a finite speed propagation argument.  More precisely, they obtained a bound on the norm of the heat kernel acting on $L^2$-functions.  As observed by Donnelly in \cite{Donnelly1987}, using the Sobolev embedding, this also gives an estimate for the Hilbert-Schmidt norm of  the heat kernel.  Since we will use this observation several times, we will, for the convenience of the reader, go through this argument in detail.

\begin{lemma}
Let $(M,g)$ be a $\Fcb$-Riemannian surface.  Let $\cU$ and $G$ be two open sets in $M$ and let $d=d_{g}(\cU,G)\ge 0$ be the distance between them.  Suppose the closure of $G$ is compact in $M\setminus\pa M$.  If $d>0$ there exists a constant $C_G$ depending on $G$ such that 
$$
     \int_{\cU}  |e^{-t\Delta_g}(x,y)e^{-t\Delta_g}(x,y')| dx  \le C_G e^{\frac{-d^2}{8t}} \quad \forall \; y,y'\in G, \; \forall \; t>0.
$$
If instead $d=0$ then, given $\nu>0,$ there exists a constant $C_{G,\nu}>0$ such that
$$
\int_{\cU}  |e^{-t\Delta_g}(x,y)e^{-t\Delta_g}(x,y')| dx  \le C_{G,\nu} \quad \forall \; y,y'\in G, \; \forall \; t\ge\nu.$$  
\label{don.1}\end{lemma}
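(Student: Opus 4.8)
\emph{Plan.} The plan is to reduce the bilinear quantity to a quadratic one by Cauchy--Schwarz in $x$,
$$\int_{\cU}|e^{-t\Delta_g}(x,y)\,e^{-t\Delta_g}(x,y')|\,dx\le \Phi(t,y)^{1/2}\,\Phi(t,y')^{1/2},\qquad \Phi(t,y):=\int_{\cU}|e^{-t\Delta_g}(x,y)|^2\,dx,$$
so that it suffices to bound $\Phi(t,y)$, uniformly for $y$ in the compact set $\overline G\subset M\setminus\pa M$, by $C_G\,e^{-d^2/(8t)}$ when $d>0$ and by $C_{G,\nu}$ when $d=0$ and $t\ge\nu$. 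Here $e^{-t\Delta_g}(\cdot,y)\in L^2(M)$ with $\norm{e^{-t\Delta_g}(\cdot,y)}_{L^2}^2=e^{-2t\Delta_g}(y,y)<\infty$, so $\Phi(t,y)\le e^{-2t\Delta_g}(y,y)$. The case $d=0$ is then immediate: for $t\ge\nu$, writing $e^{-t\Delta_g}(\cdot,y)=e^{-(t-\nu)\Delta_g}\big(e^{-\nu\Delta_g}(\cdot,y)\big)$ and using that $e^{-s\Delta_g}$ is an $L^2$-contraction gives $\Phi(t,y)\le e^{-2t\Delta_g}(y,y)\le e^{-2\nu\Delta_g}(y,y)$, and $C_{G,\nu}:=\sup_{z\in\overline G}e^{-2\nu\Delta_g}(z,z)<\infty$ by continuity of the heat kernel on $M\setminus\pa M$ and compactness of $\overline G$.

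The substance is the case $d>0$. The key device is the identity (valid for $y\in\overline G$ by self-adjointness and symmetry of the heat kernel)
$$\Phi(t,y)=\norm{\psi}_{L^2}^2=\big(e^{-t\Delta_g}\psi\big)(y),\qquad \psi:=\chi_{\cU}\,e^{-t\Delta_g}(\cdot,y)\in L^2(M),$$
which rewrites $\Phi(t,y)$ as the value, at the interior point $y$, of the heat flow at time $t$ of a source supported in $\cU$, hence far from $y$. I would then fix open sets $\overline G\subset N$, $\overline{N}\subset N'$, with $\overline{N'}$ compact in $M\setminus\pa M$ and $d_g(N',\cU)>d/2$ (possible since $d_g(\overline G,\cU)=d$). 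On the fixed compact region $\overline{N'}$ the metric is smooth, so interior elliptic regularity for $\Delta_g^k$ and the Sobolev embedding $H^{2k}\hookrightarrow C^0$ (with $2k>\tfrac12\dim M=1$, e.g. $k=1$) give, with constants independent of $y$ and $t$,
$$\Phi(t,y)=\big|(e^{-t\Delta_g}\psi)(y)\big|\ \le\ C\,\Big(\norm{\Delta_g^k e^{-t\Delta_g}\psi}_{L^2(N')}+\norm{e^{-t\Delta_g}\psi}_{L^2(N')}\Big).$$
Since $(M,g)$ is complete away from $\pa M$ (both the funnel ends and the cusps are at infinite distance), $\cos(s\sqrt{\Delta_g})$ has propagation speed $\le 1$, and the Cheeger--Gromov--Taylor finite-speed estimates give, for every $\ell\ge0$ and all measurable $A,B$, a bound $\norm{\chi_A\,\Delta_g^{\ell}e^{-t\Delta_g}\,\chi_B}_{L^2\to L^2}\le Q_\ell(t)\,e^{-d_g(A,B)^2/(4t)}$ with $Q_\ell$ polynomial in $t^{-1}$. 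Applying this with $A=N'$, $B=\cU$ (so $\chi_B\psi=\psi$) and $\ell=0,k$ bounds the right-hand side above by $C\,Q(t)\,e^{-d_g(N',\cU)^2/(4t)}\norm{\psi}_{L^2}$; dividing, $\norm{\psi}_{L^2}\le C\,Q(t)\,e^{-d_g(N',\cU)^2/(4t)}$, hence
$$\Phi(t,y)=\norm{\psi}_{L^2}^2\ \le\ C^2Q(t)^2\,e^{-d_g(N',\cU)^2/(2t)}.$$
Since $d_g(N',\cU)^2/2>d^2/8$, write this as $C^2Q(t)^2e^{-c/t}\cdot e^{-d^2/(8t)}$ with $c>0$; as $t^{-m}e^{-c/t}$ is bounded on $(0,\infty)$ for every $m$, this yields $\Phi(t,y)\le C_G\,e^{-d^2/(8t)}$ for all $t>0$ and all $y\in\overline G$, finishing the proof.

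The main obstacle is that $e^{-t\Delta_g}(\cdot,y)$ is not compactly supported, so finite propagation speed cannot be applied to it directly; the identity $\Phi(t,y)=(e^{-t\Delta_g}\psi)(y)$ is exactly what recasts the estimate as one about the heat evolution of a far-away source evaluated at the interior point $y$, where Davies--Gaffney-type decay is available. The passage from the $L^2\!\to\!L^2$ bounds of Cheeger--Gromov--Taylor to the pointwise estimate is Donnelly's observation, and it uses only Sobolev embedding on a neighborhood of the compact set $\overline G$, where the metric is smooth; the global Sobolev inequality fails on $(M,g)$ because of the cusps (compare Lemma~\ref{del.1}), but it is not needed.
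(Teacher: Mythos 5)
Your proof is correct, and it takes a genuinely different route from the paper's, although both rest on the same two pillars: the Cheeger--Gromov--Taylor finite-propagation-speed $L^2\to L^2$ off-diagonal estimates, and a Sobolev embedding used only on a fixed compact neighbourhood of $\overline{G}$ (where the metric is smooth, so the failure of global Sobolev inequalities on a $\Fcb$-surface is irrelevant -- you are right that this is the crucial point). The paper works directly with the two-variable kernel $W(y,y')=\int_{\cU}e^{-t\Delta_g}(x,y)\chi(y)\,e^{-t\Delta_g}(x,y')\chi(y')\,dx$ on an auxiliary relatively compact domain $\tG\supset\overline{G}$: it pairs $\Delta_{\tG,y}^k\Delta_{\tG,y'}^\ell W$ against the Dirichlet eigenbasis of $\tG$, uses Weyl's law to sum the resulting CGT bounds, concludes that $\Delta_{\tG,y}^k\Delta_{\tG,y'}^\ell W\in L^2(\tG\times\tG)$ with the exponential factor, and finishes with Sobolev embedding on $\tG\times\tG$. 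You instead remove the off-diagonality in $(y,y')$ at the outset by Cauchy--Schwarz in $x$, reducing to the one-variable quantity $\Phi(t,y)=\|\psi\|_{L^2}^2$ with $\psi=\chi_{\cU}e^{-t\Delta_g}(\cdot,y)$; the semigroup identity $\Phi(t,y)=(e^{-t\Delta_g}\psi)(y)$ together with interior elliptic regularity yields the self-improving inequality $\|\psi\|^2\le A(t)\|\psi\|$ with $A(t)=O(e^{-c/t})$, and dividing doubles the exponential gain. This avoids the eigenfunction expansion and Weyl's law entirely and is arguably cleaner; what the paper's version buys in exchange is the smoothness of $W$ on $G\times G$ and the explicit ``tested against $\Delta^k u\,\Delta^l v$'' formulation, both of which are reused verbatim as the hypotheses of Lemma~\ref{del.1}, where the growth of $C_G$ as $G$ escapes to infinity must be tracked. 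For the statement as given, your constants are equally uniform in $y,y'$ and $t$, and your treatment of the $d=0$ case by the on-diagonal monotonicity $e^{-2t\Delta_g}(y,y)\le e^{-2\nu\Delta_g}(y,y)$ is a further simplification over the paper's.
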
  
\begin{proof}
Suppose first that $d>0$.  Let $\tG$ be an open set relatively compact in $M$ such that $\overline{G}\subset \tG$  with $\pa\tG$ smooth and $\td=d_g(\cU,\tG)> \frac{d}{\sqrt{2}}$.  Let $\chi\in \CI_c(\tG)$ be a nonnegative cut-off function with $\chi\equiv 1$ in a neighborhood of $G$.  On $\tG\times \tG$, consider the distribution
\begin{equation}
   W(y,y') := \int_\cU e^{-t\Delta_g}(x,y)\chi(y) e^{-t\Delta_g}(x,y') \chi(y') dx.
\label{don.2}\end{equation}  
The reason for inserting the cut-off function $\chi$ in the definition of $W$ is to be able to integrate by parts later on.  

To prove the lemma   we will first show that the distributions $\Delta^k_{\tG,y} \Delta^\ell_{\tG,y'}W(y,y')$ are in $L^2,$ where $k, \ell\in\bbN$ and $\Delta_{\tG}$ is the Laplacian of the metric $g$ on $\tG$ with Dirichlet boundary conditions.  Let $\{u_i\}$, $i\in \bbN$, be an orthonormal basis of $L^2(\tG,g)$ given by eigenfunctions of $\Delta_{\tG}$ and let $0<\lambda_1\le \lambda_2\le \cdots$ be the corresponding eigenvalues.  Then $\{u_i(y) u_j(y')\}$, $i,j\in\bbN$ is an orthonormal basis of $L^2(\tG\times \tG, g\times g)$.  On $\tG$, we know from the Weyl law that there exists a constant $c>0$ such that $\lambda_j \sim cj$ as $j$ tends to infinity, so that
$$
     K:= \sqrt{ \sum_{i,j} \frac{1}{\lambda_i^2 \lambda_j^2}} <\infty.  
$$   
On the other hand, by the $L^2$-estimate of \cite[Corollary~1.2]{CGT1982} applied to the heat kernel, we know that, for any $k, \ell \in \bbN,$ there exists a constant $C_{k,\ell}>0$ such that
\begin{equation}
 \left| \int_{\tG\times \tG} (\Delta_{\tG,y}^k \Delta_{\tG,y'}^\ell W(y,y')) u_i(y)u_j(y') dy dy'    \right| \le C_{k,\ell} e^{-\frac{\td^2}{4t}}< C_{k,\ell} e^{-\frac{d^2}{8t}}, 
\label{don.3}\end{equation}
for all $i,j \in \bbN,$ and $t>0.$
Now, given $v\in L^2(\tG\times \tG, g\times g)$, it can be written as 
$$
   v(y,y')= \sum_{i,j} \mu_{ij} u_i(y)u_j(y'),  \quad \mbox{with} \; \|v\|^2_{L^2} = \sum_{i,j} |\mu_{ij}|^2.
$$
Using \eqref{don.3}, we can pair it with $\Delta^k_{\tG,y}\Delta_{\tG,y'}^{\ell}W(y,y')$, namely,
\begin{equation}
\begin{aligned}
&\left| \int_{\tG\times \tG}(\Delta_{\tG,y}^k \Delta_{\tG,y'}^\ell W(y,y')) v(y,y') dydy'  \right| = 
\left| \int_{\tG\times \tG}(\Delta_{\tG,y}^{k+1} \Delta_{\tG,y'}^{\ell+1} W(y,y'))\Delta_{\tG,y}^{-1} \Delta_{\tG,y'}^{-1} v(y,y') dydy'  \right| \\
 &\hspace{5cm}= \left| \sum_{i,j} \frac{\mu_{ij}}{\lambda_i \lambda_j} \int_{\tG\times \tG}  (\Delta_{\tG,y}^{k+1} \Delta_{\tG,y'}^{\ell+1} W(y,y')) u_i(y)u_j(y') dy dy'  \right| \\
 &\hspace{5cm}\le \left( \sum \frac{1}{\lambda_i^2\lambda_j^2}\right)^{\frac12} \left( \sum_{i,j} |\mu_{ij} |^2\right)^{\frac12} C_{k+1,\ell+1} e^{-\frac{d^2}{8t}} \\
 &\hspace{5cm}= K \| v\|_{L^2} C_{k+1,\ell+1} e^{-\frac{d^2}{8t}}, \quad \forall \; t>0.
\end{aligned}
\label{don.4}\end{equation}
Since $v$ is arbitrary, this means by Riesz theorem that $\Delta^k_{\tG,y}\Delta^\ell_{\tG,y'} W$ is in $L^2$ with 
$$
      \| \Delta^k_{\tG,y} \Delta^\ell_{\tG,y'} W\|_{L^2}\le K C_{k+1,\ell+1} e^{-\frac{d^2}{8t}}.
$$
We can thus apply the Sobolev embedding theorem to bound the $C^0$-norm of $W$ on $G\times G$, giving the desired estimate.  Notice also that we have shown $W$ is smooth on $G\times G$. 
 
If instead $d=0$, we can follow the same argument, except that instead of \eqref{don.3}, there is a constant $C_{k,\ell,\nu}$ depending on $\nu>0$ such that 
$$
\left| \int_{\tG\times \tG} (\Delta_{\tG,y}^k \Delta_{\tG,y'}^\ell W(y,y')) u_i(y)u_j(y') dy dy'    \right| \le C_{k,\ell,\nu}, \quad \forall \; i,j\in \bbN, \quad \forall\; t\ge\nu.
$$

\end{proof}

For some of the applications of this estimate, we will allow the open set $G$ to move towards infinity and it will be essential to understand how the constant $C_G$ grows under such a change.  From the proof Lemma~\ref{don.1}, this constant depends in a subtle way on the eigenvalues of the Laplacian $\Delta_{\tG}$.  To control the growth of $C_G$, we will instead derive the estimate, using the fact (established in the proof of Lemma~\ref{don.1}) that $W$ is smooth.  This requires some preparation.

Consider the hyperbolic metric
$$
     g_{\bbH^2}=  \frac{dx^2+dy^2}{y^2}
$$
on the upper-half plane $\bbH^2= \{ z=x+iy\in \bbC \; ; \; y>0\}$.  The hyperbolic cusp metric (also called the horn metric) is obtained from this metric by taking the quotient of $\bbH^2$ by the isometric action of $\bbZ$ generated by $z\mapsto z+1$.  By making the change of coordinates $u=\log y$, we can write the hyperbolic cusp metric as
$$
      g_{\horn} = du^2 + e^{-2u}dx^2 \;\; \mbox{on}  \; H= \bbR\times (\bbR/\bbZ).
$$
With respect to the change of variable $x=\theta$ and $\rho=e^{-u}$, we see this simply corresponds to the metric $\frac{d\rho^2}{\rho^2}+ \rho^2d\theta^2$ considered in the conformal surgery at a point.  However, for the study of the constant $C_G$, it will be more convenient to work with the coordinates $(u,x)$.  For $a>0$, consider the open set
$$
\cU_{a}= \{ (u,x)\in \bbR\times (\bbR/\bbZ)\; : \; u<a\}.
$$

\begin{lemma}
Let $W\in\CI(H\times H)$ be such that there exist constants $C_{a,k,l}$ depending on $a, k$ and $l$ such that
$$
    \left|  \int_{\cU_a\times \cU_a} W(y,y') \Delta^k_{g_{\hc}}u(y) \Delta^l_{g_{\hc}}v(y') dydy' \right| \le C_{a,k,l} \|u\|_{L^2(\cU_a,g_{\hc})}  \|v\|_{L^2(\cU_a,g_{\hc})}
$$
for all $u,v\in L^2(\cU_a, g_{\hc})$ with supports compactly included in $\cU_a$.  Then there exists a constant $C$ independent of $a$, $k$, and $l$ such that
$$
  \sup_{\cU_a\times \cU_a} |W| \le C (e^{4a}+1) \left( \sum_{k,l=0}^3 C_{a+1,k,l}\right)
$$
\label{del.1}\end{lemma}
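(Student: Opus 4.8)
\emph{Approach.} The idea is to represent $W$ pointwise through Dirac masses and to feed the hypotheses into that representation, exploiting two features of the model metric $g_{\hc}$: the hyperbolic cusp has spectrum $[\tfrac14,\infty)$, so $\Delta_{g_{\hc}}^{-3}$ is bounded on $L^2(H)$; and it has constant curvature, so interior elliptic estimates hold with absolute constants even though $\inj\to0$ at the cusp. For $y_0\in\cU_a$ set $h_{y_0}:=\Delta_{g_{\hc}}^{-3}\delta_{y_0}$; since $\dim H=2<12$ this is an $L^2(H)$ function, smooth away from $y_0$, with $\Delta_{g_{\hc}}^3 h_{y_0}=\delta_{y_0}$. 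Fix a smooth cutoff $\chi=\chi(u)$, $0\le\chi\le1$, with $\chi\equiv1$ on a neighbourhood of $\overline{\cU_a}$ and $\supp\chi\subset\cU_{a+1}$; since $|du|_{g_{\hc}}\equiv1$, the function $\chi$ and all of its $g_{\hc}$-covariant derivatives — and hence the coefficients of the differential operator $[\Delta_{g_{\hc}}^3,\chi]$, which has order $\le 5$ — are bounded by absolute constants. Distributionally, $\Delta_{g_{\hc}}^3(\chi h_{y_0})=\delta_{y_0}+\psi_{y_0}$, where $\psi_{y_0}:=[\Delta_{g_{\hc}}^3,\chi]h_{y_0}$ is smooth and compactly supported in $\cU_{a+1}$ (its support lies in $\supp\nabla\chi$, which is at positive distance from $\cU_a\ni y_0$, where $h_{y_0}$ is smooth).

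\emph{Reduction.} For $y_0,y_0'\in\cU_a$ write $W(y_0,y_0')=\langle W,\delta_{y_0}\otimes\delta_{y_0'}\rangle$ (legitimate since $W$ is smooth), substitute $\delta_{y_0}=\Delta_{g_{\hc}}^3(\chi h_{y_0})-\psi_{y_0}$ and the analogous identity for $y_0'$, and expand the product into four pairings of $W$ against $\Delta_{g_{\hc}}^k(\,\cdot\,)\otimes\Delta_{g_{\hc}}^l(\,\cdot\,)$ with $k,l\in\{0,3\}$ and the two slots occupied by functions among $\chi h_{y_0},\chi h_{y_0'},\psi_{y_0},\psi_{y_0'}$, all in $L^2_c(\cU_{a+1})$. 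The hypothesis applies to each term, giving
\[
|W(y_0,y_0')|\ \le\ \Bigl(\sum_{k,l=0}^{3}C_{a+1,k,l}\Bigr)\bigl(\|h_{y_0}\|_{L^2(H)}+\|\psi_{y_0}\|_{L^2}\bigr)\bigl(\|h_{y_0'}\|_{L^2(H)}+\|\psi_{y_0'}\|_{L^2}\bigr).
\]
So it suffices to bound $\|h_{y_0}\|_{L^2(H)}$ and $\|\psi_{y_0}\|_{L^2}$ uniformly for $y_0\in\cU_a$. For the second: $\Delta_{g_{\hc}}^3 h_{y_0}=0$ on a slab $\{|u-u_\ast|<c\}$ containing $\supp\nabla\chi$ (as $y_0\notin\supp\nabla\chi$), so the constant-curvature interior elliptic estimate — with absolute constant, obtained by a Bochner argument using a cutoff in the $u$-variable only — gives $\|h_{y_0}\|_{H^5(\supp\nabla\chi,\,g_{\hc})}\le C\|h_{y_0}\|_{L^2(H)}$, whence $\|\psi_{y_0}\|_{L^2}\le C\|h_{y_0}\|_{L^2(H)}$.

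\emph{The key estimate (main obstacle).} Everything thus reduces to the uniform bound $\|h_{y_0}\|_{L^2(H)}\le Ce^{a/2}$ for $y_0\in\cU_a$ — this is the one place where the degeneration of the geometry near the cusp forces a power of $e^{a}$. One has $\|h_{y_0}\|_{L^2(H)}^2=\langle\Delta_{g_{\hc}}^{-6}\delta_{y_0},\delta_{y_0}\rangle=\tfrac{1}{5!}\int_0^\infty t^5\,e^{-t\Delta_{g_{\hc}}}(y_0,y_0)\,dt$, the integral converging absolutely because of the spectral gap. Computing the diagonal heat kernel of the cusp from that of $\bbH^2$ by the method of images and estimating the resulting series, one obtains $\int_0^\infty t^5\,e^{-t\Delta_{g_{\hc}}}(y_0,y_0)\,dt\le C\bigl(1+e^{u_0}\bigr)$, where $u_0\le a$ is the $u$-coordinate of $y_0$; the $e^{u_0}$ term reflects the heat that wraps around the cross-circle of length $e^{-u_0}$, equivalently the fact that $\Vol_{g_{\hc}}\bigl(B_1(y_0)\bigr)\asymp e^{-u_0}$. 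Hence $\|h_{y_0}\|_{L^2(H)}\le C(1+e^{u_0})^{1/2}\le Ce^{a/2}$, and plugging back, $|W(y_0,y_0')|\le Ce^{a}\sum_{k,l=0}^3 C_{a+1,k,l}$ for all $y_0,y_0'\in\cU_a$. Since $e^{a}\le e^{4a}+1$ for $a\ge0$, the lemma follows. (The argument in fact gives $e^{a}$ rather than $e^{4a}+1$; the stated form is more than enough for the applications, where $\cU_a$ advances toward the cusp as $a\to\infty$ and only an explicit growth rate is needed.) The ``$+1$'' in $\cU_{a+1}$ serves only to provide a unit collar in which $\chi$ can be turned off inside the region covered by the hypothesis; and the argument is genuinely special to $g_{\hc}$ — constant curvature, the spectral gap, and the explicit cylindrical structure at the cusp are all used — which is why the lemma is isolated here for $H$ with the model metric rather than for a general $\Fcb$-surface.
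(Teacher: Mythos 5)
Your argument is correct, but it proves the lemma by a genuinely different mechanism than the paper. The paper never inverts the Laplacian on the quotient: it lifts $\delta_{y_0}$ to the universal cover $\bbH^2$, solves $\Delta^3_{\bbH^2}u_{\ty_0}=\delta_{\ty_0}$ there with a bound $\|u_{\ty_0}\|_{L^2_1}\le K_1$ that is uniform by homogeneity of $\bbH^2$, and localizes with a cutoff $\psi_{a,b}$ depending on \emph{both} coordinates (needed so that the identity descends to the quotient); since the hyperbolic gradient and Laplacian of such a cutoff at height $u\approx a$ grow like $e^a$ and $e^{2a}$, the factor $(e^{2a}+1)$ appears once per variable, producing $e^{4a}+1$. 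You instead stay on the quotient $H$, use the spectral gap $\spec(\Delta_{g_{\hc}})\subset[\tfrac14,\infty)$ to define $h_{y_0}=\Delta^{-3}\delta_{y_0}$ globally, and localize with a cutoff in $u$ only, whose covariant derivatives are uniformly bounded; the $a$-dependence then migrates entirely into $\|h_{y_0}\|_{L^2}$, which you compute via $\int_0^\infty t^5 e^{-t\Delta}(y_0,y_0)\,dt\lesssim 1+e^{u_0}$ (the collapse of $\Vol(B_1(y_0))$), yielding the sharper growth $e^{a}$ in place of $e^{4a}$. What the paper's route buys is that it avoids any global spectral input and any on-diagonal heat-kernel estimate for the cusp; what yours buys is a better constant and a cleaner separation of where the degeneration enters. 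The one step you should spell out more carefully is the claim that $\|h_{y_0}\|_{H^5(\supp\nabla\chi)}\le C\|h_{y_0}\|_{L^2(H)}$ with an absolute constant: a naive lift to $\bbH^2$ costs a covering-multiplicity factor of order $e^{a/2}$ (a unit ball wraps $\sim e^{a}$ times around the collapsed circle), which would still land inside $e^{4a}+1$ but not inside your claimed $e^{a}$. The clean fix is the one you hint at: since $\Delta^j h_{y_0}=\Delta^{j-3}\delta_{y_0}\in L^2(H)$ for $j\le 2$ with norms controlled by the same heat-trace computation, and since $\Delta^3 h_{y_0}=0$ on the slab, a single Caccioppoli identity $\int\eta^2|\nabla v|^2\le 2\int\eta^2|v||\Delta v|+4\int|\nabla\eta|^2v^2$ applied with $v=\Delta^j h_{y_0}$ and $\eta=\eta(u)$ bounds every term of the order-$5$ commutator $[\Delta^3,\chi]h_{y_0}$ by $C\sum_{j\le2}\|\Delta^jh_{y_0}\|_{L^2(H)}\le Ce^{a/2}$, with no Sobolev embedding and hence no loss from the vanishing injectivity radius.
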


\begin{proof}
Notice first that for $y_0,y_0'$ in $\cU_a$, we have that 
\begin{equation}
   W(y_0, y_0') = \int_{\cU_a\times \cU_a} W(y,y') \delta_{y_0}(y) \delta_{y_0'}(y')dydy',
\label{del.2}\end{equation}
where $\delta_{y_0}$ is the Dirac delta function centered at $y_0$.  Let $\psi \in C(\bbH^2)$ be a cut-off function taking values between $0$ and $1$ such that 
$$
         \psi(u,x)= \left\{ \begin{array}{ll}
                             1, &  0\le x \le \frac14 \;\mbox{and} \;  u\le 0 , \\
                             0, &  |x|\ge \frac12 \; \mbox{or} \; u \ge 1.
           \end{array} \right.
$$
Then define the translated  function $\psi_{a,b}(u,x)= \psi(u-a,x-b)$.  Clearly, there is a constant $K$ independent of $a$ and $b$ such that
\begin{equation}   
  \sup |\Delta_{\bbH^2} \psi_{a,b}| \le K (e^{2a}+1),  \quad \sup  |\nabla \psi_{a,b} |_{g_{\bbH^2}} \le K (e^a+1),  \quad  \sup |\psi|=1.
\label{del.4}\end{equation}
Let $q: \bbH^2\to \bbH^2/\bbZ$ be the quotient map.  Given $y_0\in \cU_a$, choose $\ty_0\in \bbH^2$ such that $q(\ty_0)=y_0$ and let $u_{\ty_0}$ be the unique solution in the Sobolev space $L^2_1(\bbH^2, g_{\bbH^2})$ to
$$
        \delta_{\ty_0}= \Delta^3_{\bbH^2} u_{\ty_0}.
$$ 
By symmetry, there is a constant $K_1$ independent of $\ty_0\in \bbH^2$ such that
\begin{equation}
        \| u_{\ty_0} \|_{L^2_1(\bbH^2,g_{\bbH^2})} \le K_1.
\label{del.5}\end{equation}
Take $b:=x(\ty_0)$ to be the $x$ coordinate of $\ty_0$  so that $\psi_{a,b}\equiv 1$ in a neighborhood of $\ty_0$.  Using this cut-off function, this means the equality
\begin{equation}
\delta_{\ty_0} = \sum_{j=0}^3 \Delta^j_{\bbH^2} \alpha_j, 
\label{del.3}\end{equation}
descends to an equality on $\cU_{a+1}$, where
\begin{equation}
\begin{aligned}
\alpha_0 &= -(\Delta_{\bbH^2}\psi_{a,b}) \Delta^2_{\bbH^2} u_{\ty_0} + 2\nabla\psi_{a,b}\cdot \nabla(\Delta_{\bbH^2}u_{\ty_0}) \\
 \alpha_1 &= - (\Delta_{\bbH^2} \psi_{a,b}) \Delta_{\bbH^2} u_{\ty_0} - \psi_{a,b} \Delta^2_{\bbH^2}u_{\ty_0}+ 2\nabla \psi\cdot \nabla (\Delta_{\bbH^2} u_{\ty_0}) \\
 \alpha_2 &= -(\Delta_{\bbH^2}\psi_{a,b}) u_{\ty_0} + 2\nabla\psi_{a,b}\cdot \nabla u_{\ty_0} \\
 \alpha_3 &= \psi_{a,b} u_{\ty_0}.
\end{aligned}
\label{del.3b}\end{equation}
By \eqref{del.5},  $\|\alpha_3\|_{L^2(\bbH^2,g_{\bbH^2})}\le K_1$.  On the other hand, since $\nabla\psi_{a,b}$ is supported away of $\ty_0$, notice by elliptic regularity and using \eqref{del.4} and \eqref{del.5} that there is a positive constant $K_2$ independent of $a$ and $\ty_0$ such that 
\begin{equation}
        \|\alpha_j\|_{L^2(\bbH^2,g_{\bbH^2})}\le K_2(e^{2a}+1), \quad j=0,1,2.           
\label{del.3c}\end{equation}
Thus, plugging  \eqref{del.3} in \eqref{del.2} and using the hypothesis of the lemma, we obtain
$$
     \sup_{\cU_a\times \cU_a} |W| \le C(e^{4a}+1)(\sum_{k,l=0}^3 C_{a+1,k,l})
$$
for a constant $C$ depending only on $K$, $K_1$ and $K_2$.     

\end{proof}

\section{Long time behavior of the relative trace}

As mentioned in the introduction, the crux of the proof of compactness of relatively isospectral metrics is showing that the relative determinant of the Laplacian and the relative heat invariants are unchanged by conformal surgery. The Laplacian is conformally covariant in dimension two, so this will follow from knowing that the variation of these invariants is continuous, so ultimately from showing that the relative trace of the heat kernel is uniformly continuous along a conformal surgery. This section provides one of the ingredients for such a result, namely, a good uniform control of the relative trace for large time.  \\

Fix a family of functions $\psi_{\epsilon}$ as in \eqref{fpsa.1} or \eqref{fpsa.1b} with support in an open set $\cV$ containing the point or the boundary component at which the conformal surgery is performed.  Let $g$ and $h$ be two $\Fcb$-metrics on M such that  $h=g$ on $\cU\cup \cV$, where $\cU= M\setminus K$ and $K$ is a compact set.  For these metrics, we can consider the corresponding conformal surgeries  $g_\eps= \psi_\eps g$ and   $h_\eps= \psi_\eps h$.  Clearly, for all $\eps>0$, we have that $h_\eps =g_\eps$ on $\cU\cup \cV$, while for $\eps=0$, we have that $h_0=g_0$ on $(\cU\cup \cV)\setminus \{p\}$ for a conformal surgery at a point $p$,  and $h_0=g_0$ on $(\cU\cup \cV)\setminus \pa_i M$ for a conformal surgery at a boundary component $\pa_i M$.
By the results of Bunke \cite{Bunke1992} and Carron \cite{Carron:DetRel}, we know the difference of heat kernels
\begin{equation}
      e^{-t\Delta_{g_\eps}} - e^{-t\Delta_{h_\eps}}
\label{fpsa.2}\end{equation}
is trace-class for all $\eps\ge 0$ and all $t>0$.  We will in fact need a more precise statement about the trace norm of this difference of heat kernels.
\begin{proposition}
Given $T>\nu>0$, there exists a positive constant $C$ such that
$$
\| e^{-t\Delta_{g_\eps}} - e^{-t\Delta_{h_\eps}}\|_{\Tr}\le C\quad \forall \epsilon\in [0,1], \quad \forall \ t\in [\nu,T].
$$
\label{utn.1}\end{proposition}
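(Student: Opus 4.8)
Here is a proof strategy.

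\medskip

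The plan is to bound the trace norm directly from Duhamel's formula. Note first that since $h=g$ on $\cU\cup\cV$ and $\psi_\eps\equiv 1$ on $M\setminus\cV$, both $g_\eps=g$ and $h_\eps=h$ hold on a fixed open neighbourhood of the compact set $K':=\supp(g-h)$, which is contained in $M\setminus(\cU\cup\cV)$ and is therefore disjoint from $\pa M$ and from the surgery site. Hence $P:=\Delta_{g_\eps}-\Delta_{h_\eps}=\Delta_g-\Delta_h$ is \emph{independent of $\eps$}: a second-order operator with smooth coefficients supported in $K'$. Pick $\chi\in\CIc(M\setminus\pa M)$ equal to $1$ on a neighbourhood of $K'$, with $\supp\chi$ a compact subset of the region where $g_\eps=g$ and $h_\eps=h$; then $P=\chi P\chi$. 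By self-adjointness of the Dirichlet Laplacians and the smoothing of the heat semigroups, Duhamel's formula gives
\[
   e^{-t\Delta_{g_\eps}}-e^{-t\Delta_{h_\eps}}=-\int_0^t e^{-(t-r)\Delta_{g_\eps}}\,P\,e^{-r\Delta_{h_\eps}}\,dr ,
\]
so it is enough to bound $\int_0^t\|e^{-(t-r)\Delta_{g_\eps}}Pe^{-r\Delta_{h_\eps}}\|_{\Tr}\,dr$ uniformly in $\eps\in[0,1]$ and $t\in[\nu,T]$.

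For $r\in(0,t)$ at least one of $r,t-r$ is $\ge t/2\ge\nu/2$. Writing $P=\chi P\chi$ and using $\|AB\|_{\Tr}\le\|A\|_{\HS}\|B\|_{\HS}$, I would estimate $\|e^{-(t-r)\Delta_{g_\eps}}Pe^{-r\Delta_{h_\eps}}\|_{\Tr}$ by $\|e^{-(t-r)\Delta_{g_\eps}}\chi P\|_{\HS}\,\|\chi e^{-r\Delta_{h_\eps}}\|_{\HS}$ when $t-r\ge t/2$, and symmetrically by $\|e^{-(t-r)\Delta_{g_\eps}}\chi\|_{\HS}\,\|P\chi e^{-r\Delta_{h_\eps}}\|_{\HS}$ when $r\ge t/2$ --- always keeping the unbounded factor $P$ on the side carrying the larger time.

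Both kinds of Hilbert--Schmidt factor are to be controlled uniformly in $\eps$ via Lemma~\ref{don.1}. For the factor without $P$: $\|\chi e^{-\tau\Delta_{h_\eps}}\|_{\HS}^2=\int_{\supp\chi}K_{h_\eps}(2\tau,x,x)\,dx$; since $h_\eps=h$ near $\supp\chi$, the fixed local geometry (plus a finite-propagation-speed control of the influence of the far region) gives $K_{h_\eps}(\sigma,x,x)\le C/\sigma$ for small $\sigma$, while Lemma~\ref{don.1} with $\cU=M$ and $y=y'$ gives $K_{h_\eps}(\sigma,x,x)\le C$ for $\sigma$ bounded below; hence $\|\chi e^{-\tau\Delta_{h_\eps}}\|_{\HS}\le C/\sqrt{\min(\tau,1)}$ uniformly in $\eps$, and likewise for $\|e^{-\tau\Delta_{g_\eps}}\chi\|_{\HS}$. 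For the factor carrying $P$, when $\tau\ge\nu/4$ one has $\|e^{-\tau\Delta_{g_\eps}}\chi P\|_{\HS}\le\|e^{-(\nu/4)\Delta_{g_\eps}}\chi P\|_{\HS}$ by contractivity, and $\|e^{-(\nu/4)\Delta_{g_\eps}}\chi P\|_{\HS}^2$ unfolds as an integral over $\supp\chi$ of squared $L^2(M)$-norms of $y$-derivatives (of order $\le 2$) of $K_{g_\eps}(\nu/4,\cdot,y)=e^{-(\nu/8)\Delta_{g_\eps}}K_{g_\eps}(\nu/8,\cdot,y)$; applying interior parabolic estimates in the $y$-variable --- with $\eps$-independent constants because the metric equals the fixed $g$ near $\supp\chi$ --- reduces this, after the contraction $e^{-(\nu/8)\Delta_{g_\eps}}$, to $\sup K_{g_\eps}(s,y',y')$ over $y'$ in a fixed neighbourhood of $\supp\chi$ and $s\ge\nu/16$, again bounded uniformly in $\eps$ by Lemma~\ref{don.1}; the estimate of $\|P\chi e^{-\tau\Delta_{h_\eps}}\|_{\HS}$ is the same with $g$ replaced by $h$.

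The one delicate point --- and the reason Lemma~\ref{don.1} is set up with an auxiliary domain --- is the uniformity in $\eps$ of these heat-kernel bounds as $\eps\to0$, when the surgery degenerates the geometry near the marked point or boundary component: it works precisely because $P$, $\chi$, and all the auxiliary domains $\tilde G$ used in applying Lemma~\ref{don.1} can be kept inside the region where the metrics coincide with the fixed $g,h$, so the Dirichlet eigenvalues of $\Delta_{\tilde G}$ --- on which the constant of Lemma~\ref{don.1} depends --- do not depend on $\eps$. Granting this, the integrand above is $\le C/\sqrt{\min(r,\,t-r,\,1)}$ with $C$ depending only on $g,h$, the surgery data, $\nu$ and $T$, which is integrable on $(0,t)$ with a bound $\le C'$ uniform in $\eps$ and $t$; this gives the Proposition. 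Making Duhamel's formula rigorous and dealing with the mild $r^{-1/2}$ singularities at the endpoints involve no new idea.
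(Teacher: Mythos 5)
Your argument is correct in outline but follows a genuinely different route from the paper. The paper never forms a global Duhamel identity with the perturbation $P=\Delta_{g_\eps}-\Delta_{h_\eps}$; instead it covers $M$ by open sets $\cW_1$ (where $g_\eps=h_\eps$) and $\cW_2$ (compact, where $g_\eps=g_0$ and $h_\eps=h_0$), estimates the trace norm of $P_{\cW_i}(e^{-t\Delta_{g_\eps}}-e^{-t\Delta_{h_\eps}})P_{\cW_j}$ block by block using cut-off approximate heat kernels, Bunke's argument on $\cW_1\times\cW_1$, and a reduction to the fixed pair $(g_0,h_0)$ on $\cW_2\times\cW_2$. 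Your key observation --- that $P=\Delta_g-\Delta_h$ is independent of $\eps$ and supported in a fixed compact set on a neighborhood of which $g_\eps=g$ and $h_\eps=h$ for all $\eps$ --- is correct (the set where $g\ne h$ lies in $K\setminus\cV$, while $\psi_\eps\ne 1$ only on $\{r\le 1/2\}\subset\cV$, and these are separated), and it lets you localize the entire problem to an $\eps$-independent region untouched by the surgery; this is arguably cleaner than the four-block decomposition and bypasses the comparison with $\eps=0$. What the paper's route buys is that every Hilbert--Schmidt factor it uses either carries a time bounded below or a positive distance (hence a factor $e^{-d^2/8t}$), so Lemma~\ref{don.1} literally suffices; your route needs two further ingredients that you state but do not prove: (i) the small-time on-diagonal bound $K_{h_\eps}(\sigma,x,x)\le C/\sigma$ uniformly in $\eps$ for $x\in\supp\chi$, which is not in Lemma~\ref{don.1} but follows from the parabolic mean value inequality on a fixed coordinate ball where the metric is the $\eps$-independent $h$ (no global input is actually required, so your appeal to finite propagation speed can be dispensed with here), and (ii) uniform $L^2_x$ bounds on second $y$-derivatives of $K_{g_\eps}(\tau,\cdot,y)$ for $\tau\ge\nu/4$ and $y\in\supp\chi$, which is of exactly the type established in the proof of Lemma~\ref{don.1} (the bounds on $\Delta_y^k\Delta_{y'}^\ell W$ plus Sobolev embedding, with constants depending only on an $\eps$-independent auxiliary domain). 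Granting these, your endpoint singularity $\min(r,t-r)^{-1/2}$ is integrable and the bound is uniform for $t\in[\nu,T]$ and $\eps\in[0,1]$, as required; note that, like the paper's, your estimate is only claimed for $t$ bounded below, the small-$t$ regime being handled separately by Lemma~\ref{sta.1}.
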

\begin{proof}

For metrics that coincide outside a compact set, Bunke \cite{Bunke1992} established a bound on the trace norm of the relative heat kernel.  By partially following his proof, but also adding a new twist, we will show that this bound can be achieved uniformly in $\eps.$

Let $S$ be the point or the boundary component where the conformal surgery is taking place.
Let us choose two open sets in $M,$ $\cW_1$ and $\cW_2,$ such that
\begin{itemize}
\item[(i)] $M= \cW_1 \cup \cW_2$;
\item[(ii)] $\overline{\cW}_1\subset \cV\cup \cU$ and $\overline{\cW}_2$ is compact;
\item[(iii)] $\left.g_\epsilon\right|_{\cW_1}=\left. h_\epsilon \right|_{\cW_1}$ for all $\epsilon\in [0,1]$;
\item[(iv)] $\left.g_\epsilon\right|_{\cW_2}=\left. g_0 \right|_{\cW_2}$ and
  $\left.h_\epsilon\right|_{\cW_2}=\left. h_0 \right|_{\cW_2}$ for all $\epsilon\in [0,1]$.
\end{itemize}
We will also choose $\cW_1$ and $\cW_2$ so that they are contained in bigger open sets $\wt\cW_1$ and $\wt\cW_2$ satisfying the same properties and such that
$$
        g_\epsilon=h_\epsilon=g=h \quad \mbox{on}
        \; \wt\cW_1\setminus \cW_1 \; \mbox{and}\; \wt\cW_2\setminus \cW_2, \; \forall \ \epsilon\in [0,1].
$$
In \cite{Bunke1992}, the strategy is to estimate the trace-norm of the difference of the heat kernels by considering the restriction of the difference to $\cW_1\times \cW_1$, $\cW_2\times \cW_2$,
$\cW_1\times \cW_2$ and $\cW_2\times \cW_1,$
and then applying finite propagation speed estimates.

Let us consider first the region $\cW_2\times \cW_2.$
Let $\phi\in \CI_c(M)$ be a function with $\phi\equiv 1$ near $\cW_2$ and $\supp\phi \subset \wt\cW_2$.  Let $\chi\in \CI_c(M)$ be another function such that $\chi\equiv 1$ on $\supp \phi$ and $\supp \chi \subset \wt\cW_2$.  Finally, let $\gamma\in \CI(M)$ be a function with $\gamma\equiv 1$ on $\supp(1-\phi)$ and $\gamma\equiv 0$ on $\cW_2$.  Consider then the approximate heat kernel
$$
     H_{\epsilon}(t,x,y)= \gamma(x) e^{-t\Delta_{g_0}}(x,y)(1-\phi(y))+
     \chi(x)e^{-t\Delta_{g_\epsilon}}(x,y)\phi(y).
$$
Since $\lim_{t\to 0} H_{\epsilon}(t,x,y)= \delta_{x,y}$, we obtain via Duhamel's principle that
$$
e^{-t\Delta_{g_0}}(x,y) - H_\epsilon(t,x,y)=
  - \int_0^t \int_{M\setminus S}
  e^{-s\Delta_{g_{0}}}(x,z) \left(\frac{\pa}{\pa t} +\Delta_{g_{0}}\right) H_\epsilon(t-s,z,y) dzds.
$$
Thus, for $x$ and $y$ in $\cW_2$, we have that $H_\epsilon(t,x,y)= e^{-t\Delta_{g_\epsilon}}$, so that
$$
e^{-t\Delta_{g_0}}(x,y) - e^{-t\Delta_{g_\epsilon}}(x,y)=
  - \int_0^t \int_{G}
  e^{-s\Delta_{g_{0}}}(x,z)  E_\epsilon(t-s,z,y) dzds,
$$
where $G\subset \wt\cW_2\setminus \cW_2$ is the support of $d\chi$ and
$$
   E_\epsilon(t,z,y)= (\Delta_{g}\chi)(z)e^{-t\Delta_{g_{\epsilon}}}(z,y) -2
   \langle \nabla_z\chi(z), \nabla_z e^{-t\Delta_{g_\epsilon}}(z,y)\rangle_g
$$
If $P_G$ and $P_{\cW_2}$ are the projection operators obtained by multiplying by the characteristic functions of $G$ and $\cW_2$, this can be rewritten as
\begin{equation}
P_{\cW_2}( e^{-t\Delta_{g_0}} -e^{-t\Delta_{g_\epsilon}})P_{\cW_2}=
-\int_0^t (P_{\cW_2} e^{-s\Delta_{g_0}}P_G)(P_G E_\epsilon(t-s)P_{\cW_2})ds.
\label{utn.3}\end{equation}
If $d= \dist_{g}(G,\cW_2)$ is the distance between $G$ and $\cW_2$ with respect to the metric $g$, then using the $L^2$ estimates of \cite{CGT1982}, we know by Lemma~\ref{don.1} (\cf \cite[p.69]{Bunke1992}) that there exists a positive constant $C$ depending on $G$  such that
$$
 \| P_{\cW_2} e^{-s\Delta_{g_0}}P_G\|_{\HS}\le Ce^{-\frac{d^2}{8s}}, \quad
 \| P_{G} E_\epsilon(t-s)P_{\cW_2}\|_{\HS}\le Ce^{-\frac{d^2}{8(t-s)}},
$$
where $\|\cdot\|_{\HS}$ is the Hilbert-Schmidt norm.  Since we have $\|AB\|_{\Tr}\le \|A\|_{\HS}\|B\|_{\HS}$ for two Hilbert-Schmidt operators $A$ and $B$, we see from \eqref{utn.3} that
$$
     \| P_{\cW_2}( e^{-t\Delta_{g_0}} -e^{-t\Delta_{g_\epsilon}})P_{\cW_2}  \|_{\Tr}\le C
$$
for a positive constant $C$ independent of $t\in(0,T]$ and $\epsilon\in [0,1]$.  We obtain similarly that
$$
\| P_{\cW_2}( e^{-t\Delta_{h_0}} -e^{-t\Delta_{h_\epsilon}})P_{\cW_2}  \|_{\Tr}\le C.$$
Combining these two inequalities, this means that for $t\in [\nu,T]$,
$$
\| P_{\cW_2}( e^{-t\Delta_{g_\epsilon}} -e^{-t\Delta_{h_\epsilon}})P_{\cW_2}  \|_{\Tr}\le 2C+ \max_{t\in[\nu,T]} \| P_{\cW_2}( e^{-t\Delta_{g_0}} -e^{-t\Delta_{h_0}})P_{\cW_2}  \|_{\Tr},$$
giving the desired uniform bound in $\epsilon$ in that region.

For the region $\cW_1\times \cW_1$, Bunke writes the difference of heat kernels as a sum of products of Hilbert-Schmidt operators
whose norm is bounded by $Ce^{-\frac{d^2}{8t}}$ using a finite speed propagation argument, see \cite[Theorem~3.4]{Bunke1992}.
Here, the positive constants $C$ and $d$ depend on a choice (independent of $\epsilon$) of compact region $G$ in
$\wt\cW_1\setminus \cW_1$ and on the metric $h=h_\epsilon= g_\epsilon=g$ in that region.  In particular, the constants $C$ and $d$ can
be chosen to be the same for all $\epsilon\in [0,1]$.

For the regions $\cW_1\times \cW_2$, we cannot proceed as in \cite{Bunke1992}, since we do not have a uniform bound on the curvature in $\epsilon$ for a conformal surgery at a point.  Instead, we will use a finite speed propagation argument.  Since we already control the trace norm on $\cW_1\times \cW_1$ and $\cW_2\times \cW_2$,  it suffices to control the trace norm in a smaller open set $\hW_1\times \hW_2\subset \cW_1\times \cW_2$, where the open sets $\hW_1$ and $\hW_2$ can be chosen to be disjoint and such that 
$$
    M\setminus \cW_2 \subset \hW_1\subset \cW_1 \quad\mbox{and}\quad M\setminus \cW_1 \subset \hW_2\subset \cW_2,
$$
with $g_{\epsilon}=h_{\epsilon}=g_0=h_0$ on $\cW_1\setminus \hW_1$ and $\cW_2\setminus \hW_2$.
 
Let $\phi_1, \phi_2\in \CI(M)$ be nonnegative functions such that 
\begin{itemize}
\item[(i)]  $\phi_i\equiv 1$ in an open neighborhood of $\hW_i$ for $i=1,2$;  
\item[(ii)] $\supp \phi_1 \cap \supp \phi_2 = \emptyset$. 
\end{itemize}
Since the kernel 
$$
  E_{\epsilon}(t,x,y)= \phi_1(x) (e^{-t\Delta_{g_{\epsilon}}}(x,y) -e^{-t\Delta_{h_{\epsilon}}}(x,y))\phi_2(y)
$$ 
is supported away from the diagonal, its limit as $t\to 0$ is zero.   Thus, by Duhamel's principle, we have that 
$$
  E_{\epsilon}(t,x,y)= \int_0^t \int_{M\setminus S}  e^{-s\Delta_{g_{\epsilon}}}(x,z) \left(\frac{\pa}{\pa t} + \Delta_{g_{\epsilon}}\right) E_{\epsilon}(t-s,z,y) dzds.
$$
Using the fact $g_{\epsilon}=h_{\epsilon}$ on $\cW_1$ and $\supp \phi_1 \subset \cW_1$, we see that this can be rewritten as 
$$
 E_{\epsilon}(t,x,y)= \int_0^t\int_G e^{-s\Delta_{g_{\epsilon}}}(x,z) G_{\epsilon}(t-s,x,y) dz ds, 
$$
where $G\subset \cW_1\setminus \hW_1$ is the support of $d\phi_1$ and 
\begin{multline}
  G_{\epsilon}(t,z,y)= (\Delta_g \phi_1)(z) (e^{-t\Delta_{g_{\epsilon}}}(z,y)-e^{-t\Delta_{h_{\epsilon}}}(z,y)) \\ + 2\langle \nabla_z \phi_1(z), \nabla_z (e^{-t\Delta_{h_{\epsilon}}}(z,y)- e^{-t\Delta_{g_{\epsilon}}}(z,y))\rangle_g.
\end{multline}
If $P_{\hW_1}$, $P_{\hW_2}$ and $P_{G}$ are the projection operators obtained by multiplying by the characteristic functions of $\hW_1$,  $\hW_2$ and $G$, then this can be reformulated as follows,
$$
  P_{\hW_1}(e^{-t\Delta_{g_{\epsilon}}}- e^{-t\Delta_{h_{\epsilon}}})P_{\hW_2}= \int_0^t (P_{\hW_1}e^{-s\Delta_{g_{\epsilon}}} P_G) (P_G G_{\epsilon}(t-s) P_{\hW_2}) ds.
$$
Then, for $d:= \min_{i=1,2} \dist_g(G,\hW_i) >0$, we see from Lemma~\ref{don.1} that there is a  positive constant C depending on $G$ such that 
$$
   \| P_{\hW_1}e^{-s\Delta_{g_{\epsilon}}} P_G\|_{\HS}\le C e^{-\frac{d^2}{8s}}, \quad
   \|P_G G_{\epsilon}(t-s) P_{\hW_2} \|_{\HS} \le C e^{-\frac{d^2}{8(t-s)}}.
$$
We can thus conclude as before that there is a constant $C_1>0$ such that  
\begin{equation}
  \|P_{\hW_1}(e^{-t\Delta_{g_{\epsilon}}}- e^{-t\Delta_{h_{\epsilon}}})P_{\hW_2}\|_{\Tr}\le C_1, \quad \forall t\in [0,T], \; \forall \epsilon \in [0,1]. 
\label{lde.1}\end{equation}
 
Finally, for the region $\cW_2\times \cW_1$, we also only need to control the trace norm on
$\hW_2\times \hW_1$.   Since $P_{\hW_2}(e^{-t\Delta_{g_{\epsilon}}}- e^{-t\Delta_{h_{\epsilon}}})P_{\hW_1}$ is the adjoint\footnote{When acting on half-densities} of $ P_{\hW_1}(e^{-t\Delta_{g_{\epsilon}}}- e^{-t\Delta_{h_{\epsilon}}})P_{\hW_2}$, the desired estimate follows from \eqref{lde.1} in this case.   
\end{proof}

Using Theorem~\ref{csp.1} and Proposition~\ref{utn.1}, we obtain the following estimate for the behavior of the relative trace as $t$ tends to infinity.

\begin{corollary}
Let $\mu>0$ be a uniform lower bound for the positive spectrum of $\Delta_{g_{\epsilon}}$ and $\Delta_{h_{\epsilon}}$ for all $\epsilon\in [0,1]$.  Then there exist $T>0$ and $K>0$ independent of $\epsilon$ such that
\[
   \| e^{-t\Delta_{g_{\epsilon}}}-e^{-t\Delta_{h_{\epsilon}}}\|_{\Tr}  \le K e^{-\frac{\mu}2 t}, \quad \forall t\ge T.
\]
\label{ed.1}\end{corollary}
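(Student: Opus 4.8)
The plan is to deduce the decay from two facts already available: the uniform short-time bound of Proposition~\ref{utn.1}, which produces a constant $C_{0}$ with $\|e^{-t\Delta_{g_{\epsilon}}}-e^{-t\Delta_{h_{\epsilon}}}\|_{\Tr}\le C_{0}$ for all $\epsilon\in[0,1]$ and all $t$ in any fixed interval $[\nu,T_{0}]$ with $T_{0}>\nu$; and the uniform spectral gap, guaranteed by Theorem~\ref{csp.1} together with the explicit description of the continuous spectrum in terms of the asymptotic conformal factors (which is stable under the surgeries), so that one may choose $\mu>0$ with $\|e^{-s\Delta_{g_{\epsilon}}}\Pi_{g_{\epsilon}}^{\perp}\|\le e^{-\mu s}$ and $\|e^{-s\Delta_{h_{\epsilon}}}\Pi_{h_{\epsilon}}^{\perp}\|\le e^{-\mu s}$ in operator norm for all $s\ge 0$ and all $\epsilon$, where $\Pi^{\perp}$ denotes orthogonal projection off the (at most one-dimensional) kernel. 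All operators are regarded as acting on the common Hilbert space of $L^{2}$ half-densities, so their products and differences make sense and the relative heat operator is trace class by Bunke and Carron as recalled above. When $(M,g)$ has a funnel end or a nonempty boundary, $0\notin\Spec(\Delta_{g_{\epsilon}})$, $\Pi^{\perp}=\Id$, and the argument below proves the statement literally; when $0\in\Spec$ (no funnels, no boundary) one runs the identical argument for the reduced operators $\tilde E_{g_{\epsilon}}(t):=e^{-t\Delta_{g_{\epsilon}}}-\Pi_{g_{\epsilon}}$, which is anyway the object entering the relative determinant, and the difference of the two rank-$\le 1$ projections contributes only a harmless additive constant to $\|e^{-t\Delta_{g_{\epsilon}}}-e^{-t\Delta_{h_{\epsilon}}}\|_{\Tr}$.

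Set $A_{\epsilon}:=\tilde E_{g_{\epsilon}}(T_{0})$ and $B_{\epsilon}:=\tilde E_{h_{\epsilon}}(T_{0})$, so that $\|A_{\epsilon}^{k}\|\le e^{-\mu kT_{0}}$, $\|B_{\epsilon}^{k}\|\le e^{-\mu kT_{0}}$, and $\|A_{\epsilon}-B_{\epsilon}\|_{\Tr}\le C_{0}$ by Proposition~\ref{utn.1} (after absorbing the at most $2$ from the projections into $C_{0}$). The heart of the matter is the telescoping identity
\[
  A_{\epsilon}^{\,n}-B_{\epsilon}^{\,n}=\sum_{j=0}^{n-1}A_{\epsilon}^{\,n-1-j}\,(A_{\epsilon}-B_{\epsilon})\,B_{\epsilon}^{\,j},
\]
valid for bounded operators, in which every summand is trace class since $A_{\epsilon}-B_{\epsilon}$ is. By the ideal inequality $\|XYZ\|_{\Tr}\le\|X\|\,\|Y\|_{\Tr}\,\|Z\|$ each summand has trace norm at most $e^{-\mu(n-1-j)T_{0}}C_{0}e^{-\mu jT_{0}}=C_{0}e^{-\mu(n-1)T_{0}}$, so $\|A_{\epsilon}^{\,n}-B_{\epsilon}^{\,n}\|_{\Tr}\le nC_{0}e^{-\mu(n-1)T_{0}}$ for all $n\in\bbN$ and all $\epsilon\in[0,1]$. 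Finally, for $t\ge T_{0}+\nu$ write $t=nT_{0}+s$ with $n=\lfloor (t-\nu)/T_{0}\rfloor\ge 1$ and $s\in[\nu,T_{0}+\nu)$, and split, using $\tilde E_{g_{\epsilon}}(t)=\tilde E_{g_{\epsilon}}(s)A_{\epsilon}^{\,n}$,
\[
  \tilde E_{g_{\epsilon}}(t)-\tilde E_{h_{\epsilon}}(t)
  =\tilde E_{g_{\epsilon}}(s)\bigl(A_{\epsilon}^{\,n}-B_{\epsilon}^{\,n}\bigr)
   +\bigl(\tilde E_{g_{\epsilon}}(s)-\tilde E_{h_{\epsilon}}(s)\bigr)B_{\epsilon}^{\,n}.
\]
The first term has trace norm $\le\|A_{\epsilon}^{\,n}-B_{\epsilon}^{\,n}\|_{\Tr}\le nC_{0}e^{-\mu(n-1)T_{0}}$; for the second, Proposition~\ref{utn.1} applied on $[\nu,T_{0}+\nu]$ bounds $\|\tilde E_{g_{\epsilon}}(s)-\tilde E_{h_{\epsilon}}(s)\|_{\Tr}$ by a constant $C_{1}$ and $\|B_{\epsilon}^{\,n}\|\le e^{-\mu nT_{0}}$, so its trace norm is $\le C_{1}e^{-\mu nT_{0}}$. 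Since $nT_{0}\ge t-T_{0}-\nu$ and $n<t/T_{0}$, the sum is $\le(\alpha t+\beta)e^{-\mu t}$ with $\alpha,\beta$ independent of $\epsilon$; as $(\alpha t+\beta)e^{-\mu t/2}$ is bounded on $[0,\infty)$, this is $\le Ke^{-\mu t/2}$ for a suitable $K$, and one takes $T=T_{0}+\nu$.

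I do not anticipate a genuine obstacle here: the whole content is the interplay of the trace-ideal inequalities with a uniform seed bound and a uniform gap. The only points needing care are bookkeeping: checking that $C_{0},C_{1}$ (via Proposition~\ref{utn.1}) and $\mu$ (via Theorem~\ref{csp.1} and the stability of the continuous-spectrum thresholds under conformal surgery) are truly independent of $\epsilon$, and handling the zero eigenvalue in the no-funnel, no-boundary case by passing to $e^{-t\Delta}-\Pi$ as indicated (this is what is used afterwards for the relative determinant in any case). If one prefers to avoid the telescoping sum, the same conclusion follows by iterating the two-term Duhamel-type splitting above, which yields the recursion $b(t)\le C_{0}e^{-\mu(t-T_{0})}+e^{-\mu T_{0}}b(t-T_{0})$ for $b(t):=\|\tilde E_{g_{\epsilon}}(t)-\tilde E_{h_{\epsilon}}(t)\|_{\Tr}$, hence exponential decay by summing a geometric series.
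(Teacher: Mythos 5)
Your proof is correct and follows essentially the same strategy as the paper's: seed the estimate with the uniform trace-norm bound of Proposition~\ref{utn.1} on a fixed time interval, propagate it to large $t$ via the semigroup property combined with the trace-ideal inequality and the operator-norm decay $e^{-\mu s}$ coming from the spectral gap, and project off the constants when $0$ is an eigenvalue. The only cosmetic difference is that you telescope $A^n-B^n$ linearly in steps of size $T_0$, whereas the paper iterates the two-term dyadic splitting $A^2-B^2=A(A-B)+(A-B)B$ with $t\mapsto t/2$; both land on the same $e^{-\mu t/2}$ bound.
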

\begin{proof}
Recall that any $\Fcb$ metric has a punctured neighborhood of $0$ disjoint from its spectrum.
Assume first that $0$ is not in the spectrum.  Since there is a constant $C\ge 1$ such that $\frac{g_\epsilon}{C}\le h_\epsilon\le C g_\epsilon$ for all $\epsilon$, we know  by the spectral theorem that for $t_0= \frac{\log (2C^2)}{\mu}$, we have
$$
       \| e^{-t_0 \Delta_{g_{\epsilon}}}\| \le \frac12 , \quad
       \| e^{-t_0\Delta_{h_{\epsilon}}}\| \le \frac12,
$$
where $\| \cdot \|$ is the operator norm defined with respect to the norm of $L^2(M,g_\epsilon)$.  For $t\ge 2t_0$, notice that
\begin{equation}
\begin{aligned}
 \| e^{-t\Delta_{g_{\epsilon}}}-e^{-t\Delta_{h_{\epsilon}}}\|_{\Tr} &= \| e^{-\frac{t}{2}\Delta_{g_{\epsilon}}}(e^{-\frac{t}{2}\Delta_{g_{\epsilon}}}- e^{-\frac{t}{2}\Delta_{h_{\epsilon}}}) +
    (e^{-\frac{t}{2}\Delta_{g_{\epsilon}}}-  e^{-\frac{t}{2}\Delta_{h_{\epsilon}}})e^{-\frac{t}{2}\Delta_{h_{\epsilon}}}\|_{\Tr}  \\
    & \le (\| e^{-\frac{t}{2}\Delta_{g_{\epsilon}}} \|  +\| e^{-\frac{t}{2}\Delta_{h_{\epsilon}}} \|  )  \|  e^{-\frac{t}{2}\Delta_{g_{\epsilon}}} -e^{-\frac{t}{2}\Delta_{h_{\epsilon}}}  \|_{\Tr} \\
    &\le  \|  e^{-\frac{t}{2}\Delta_{g_{\epsilon}}} -e^{-\frac{t}{2}\Delta_{h_{\epsilon}}}  \|_{\Tr}.\end{aligned}
\label{ed.2}\end{equation}
Applying this inequality finitely many times, we see that for all $t\ge 2t_0$,
$$
\| e^{-t\Delta_{g_{\epsilon}}}-e^{-t\Delta_{h_{\epsilon}}}\|_{\Tr} \le \max_{\tau\in [t_0,2t_0]} \| e^{-\tau\Delta_{g_{\epsilon}}}-e^{-\tau\Delta_{h_{\epsilon}}}\|_{\Tr}.$$

By Proposition~\ref{utn.1}, this means the relative trace is uniformly bounded for $t\ge 2t_0$ and $\epsilon\in [0,1]$.  Now, using the fact that $\| e^{-t\Delta_{g_{\epsilon}}}\|  \le e^{-t\mu}$ and $\| e^{-t\Delta_{h_{\epsilon}}}\|  \le c e^{-t\mu}$ for some constant $c>0$ only depending on $g$ and $h$ and proceeding as in \eqref{ed.2}, we thus have for $t\ge 2t_0$,
\begin{equation}
\begin{aligned}
\| e^{-t\Delta_{g_{\epsilon}}}-e^{-t\Delta_{h_{\epsilon}}}\|_{\Tr} &\le
(\| e^{-\frac{t}{2}\Delta_{g_{\epsilon}}} \|  +\| e^{-\frac{t}{2}\Delta_{h_{\epsilon}}} \|  )  \|  e^{-\frac{t}{2}\Delta_{g_{\epsilon}}} -e^{-\frac{t}{2}\Delta_{h_{\epsilon}}}  \|_{\Tr} \\
&\le (1+c)e^{-\frac{\mu}{2}t } \max_{\tau\in [t_0,2t_0]} \| e^{-\tau\Delta_{g_{\epsilon}}}-e^{-\tau\Delta_{h_{\epsilon}}}\|_{\Tr}.
\end{aligned} \label{ed.3}\end{equation}
By Proposition~\ref{utn.1},  $ \max_{\tau\in [t_0,2t_0]} \| e^{-\tau\Delta_{g_{\epsilon}}}-e^{-\tau\Delta_{h_{\epsilon}}}\|_{\Tr}$ is bounded above by a positive constant independent of $\epsilon$.  This gives the desired result.  If zero is in the  spectrum, we can obtain the same result by first projecting off the constants.
\end{proof}

\section{Finite time behavior of the relative trace}

To introduce and study the relative determinant, we need to obtain some good control on the relative trace as $\epsilon$ tends to $0$. We will adapt the methods used in \cite{Donnelly1987} to show that  the relative heat trace is continuous in $\eps$ for small $t.$

\begin{theorem}
For $T>0$,  the functional
\[
    \eps\mapsto \Tr(e^{-t\Delta_{g_\eps}} - e^{-t\Delta_{h_\eps}})
\]
is continuous at $\eps =0$ uniformly with respect to $t\in (0,T]$.
\label{fpsa.3}\end{theorem}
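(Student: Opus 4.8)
The plan is to show $\sup_{t\in(0,T]}\bigl|\Tr(D_\eps(t))-\Tr(D_0(t))\bigr|\to 0$ as $\eps\to 0^+$, where $D_\eps(t)=e^{-t\Delta_{g_\eps}}-e^{-t\Delta_{h_\eps}}$. Since $D_\eps(t)$ is smoothing, $\Tr(D_\eps(t))=\int_M\bigl(e^{-t\Delta_{g_\eps}}(x,x)-e^{-t\Delta_{h_\eps}}(x,x)\bigr)\,dx$, and because $g_\eps=h_\eps$ off the fixed compact set $K$, a finite propagation argument (Lemma~\ref{don.1}) shows the integrand is $O\bigl(e^{-d_{g_\eps}(x,K)^2/(ct)}\bigr)$ away from $K$, and similarly for $D_0$. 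The idea is to fix $R>0$, split $M=\Omega_R\sqcup(M\setminus\Omega_R)$ with $\Omega_R=\{d_{g_0}(\cdot,K)\le R\}$ (compact), take $R$ large to make the tail uniformly small, and then exploit that $g_\eps\to g_0$ and $h_\eps\to h_0$ in $\CI$ on the \emph{fixed} compact set $\Omega_R$ as $\eps\to 0$.

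For the tail I would establish $\int_{M\setminus\Omega_R}\bigl(|D_\eps(t)(x,x)|+|D_0(t)(x,x)|\bigr)\,dx\le C_T e^{-cR^2/T}$, uniformly in $\eps$ small and $t\in(0,T]$. Two points are essential. First, $\psi_\eps$ degenerates near the surgery locus $S$, so there is no uniform control of curvature, injectivity radius, or Sobolev constant, and in the cusp end the finite propagation constant $C_G$ of Lemma~\ref{don.1} degrades as $G\to\infty$; this is handled by Lemma~\ref{del.1}, which replaces the Sobolev embedding by a quantitative pointwise bound on the relevant kernels whose growth in the cusp depth $a$ is only $e^{4a}$, dominated by the Gaussian $e^{-a^2/(ct)}$. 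Second, one verifies, uniformly for $\eps<e^{-R}$, the distance lower bound $d_{g_\eps}(x,K)\gtrsim\min\bigl(d_{g_0}(x,K),\,c_0|\log\eps|\bigr)$ — in $g_\eps$ the filled-in point, resp.\ the new funnel end, lies at distance $\sim|\log\eps|$ from $K$, so for $\eps$ small the `new' geometry is invisible in time $\le T$ — together with a uniform bound $\int_M e^{-d_{g_\eps}(\cdot,K)^2/(2ct)}\,dvol_{g_\eps}\le C_T$; in the latter the exponential volume growth of the funnels, and the possibly exponentially large volume $\sim 1/\eps$ of the cap in the boundary-component case, are all beaten by the super-exponential factor $e^{-c|\log\eps|^2/T}$ coming from the distance bound, while cusps and the point-surgery cap have bounded volume. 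Combining yields the tail bound ($D_0$ being the $\eps$-free special case).

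On the compact piece, for each fixed $R$ the set $\Omega_R\cap\cV$ is contained in $\{r\ge r_0(R)\}$ for some $r_0(R)>0$, where $\psi_\eps\to 1$ in every $\CI$-seminorm, so $g_\eps\to g_0$ and $h_\eps\to h_0$ in $\CI(\overline{\Omega_R})$. Enlarging $R$ to $R'>R$ and smoothing the domain, one more application of finite propagation replaces every global heat kernel appearing in the Bunke-type representation of $D_\eps(t)\rest{\Omega_R\times\Omega_R}$ by the Dirichlet heat kernel on the \emph{fixed} compact surface-with-boundary $\Omega'_{R'}$, at the cost of an error $O\bigl(e^{-c(R'-R)^2/t}\bigr)$. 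On $\Omega'_{R'}$ the problem becomes continuity of the relative heat trace on a fixed compact surface under a $\CI$-convergent perturbation of the metric, uniformly in $t\in(0,T]$. Here the relative heat invariants are unchanged: they are integrals over $K$ of universal local invariants of the \emph{$\eps$-independent} metrics $g$ and $h$, the contributions over $\cV$ cancelling since $g_\eps\equiv h_\eps$ there; so by an $\eps$-uniform remainder estimate in the heat expansion the difference $\Tr_{\Omega'_{R'}}\bigl(D_\eps(t)-D_0(t)\bigr)$ is uniformly $O(t^N)$ as $t\to 0$, while on each subinterval $[t_0,T]$ it tends to $0$ by the Duhamel-plus-local-parametrix argument of \cite{Donnelly1987}, which I would carry over to the present conformal perturbation. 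Hence $\int_{\Omega_R}\bigl(D_\eps(t)(x,x)-D_0(t)(x,x)\bigr)\,dx\to 0$ as $\eps\to 0$, uniformly in $t\in(0,T]$.

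Putting these together: given $\eta>0$, first choose $R$ with $C_Te^{-cR^2/T}<\eta/3$, then $\eps$ small (in particular $\eps<e^{-R}$) so that the compact-piece difference and the finite propagation errors are each $<\eta/3$; this gives $\sup_{t\in(0,T]}|\Tr(D_\eps(t))-\Tr(D_0(t))|<\eta$. The boundary-component surgery is handled by the identical scheme, the only change being that the filled-in cap is replaced by the new hyperbolic funnel and that the Dirichlet condition on $\pa_iM$ is used to control the cap contribution in the tail estimate. The main obstacle is the tail estimate in the cusp: producing heat-kernel decay bounds uniform in $\eps$ despite the complete loss of uniform geometric control near $S$ — which is precisely why Lemma~\ref{del.1} and the distance lower bound $d_{g_\eps}(\cdot,K)\gtrsim\min\bigl(d_{g_0}(\cdot,K),c_0|\log\eps|\bigr)$ are needed.
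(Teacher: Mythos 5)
Your proposal is correct in substance and rests on the same machinery as the paper's proof --- Duhamel's principle combined with the finite-propagation $L^2$ estimates of Lemma~\ref{don.1}, control of the resulting constants as regions recede into the ends (Lemma~\ref{del.1}), and cancellation of the $\eps$-independent local short-time expansions --- but it organizes the argument around a different decomposition. The paper splits by time and by proximity to the surgery locus $S$: a uniform-in-$\eps$ bound $Cte^{-d^2/8t}$ disposes of all small $t$ at once (Lemma~\ref{sta.1}); for $t\in[\nu,T]$ the integrand $e^{-t\Delta_{g_\eps}}(x,x)-e^{-t\Delta_{h_\eps}}(x,x)$ is shown to be small on a shrinking neighborhood $A$ of $S$ because $A$ is far from the defect set where $g\ne h$ (Lemma~\ref{fpsa.4}), and on the complement $N$ one compares $\eps$ with $0$ directly for each metric (Lemma~\ref{fpas.8}). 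You instead split space into a large compact core $\Omega_R$ around the defect set $K$ and its complement, prove a tail bound uniform in $\eps$ and $t$, and on the core reduce to Dirichlet heat kernels on a fixed compact surface where $g_\eps\to g_0$ in $\CI$. This is a legitimate alternative, and your tail estimate is in fact the easier half: there the collar $G$ carrying the Duhamel error stays fixed while only $\cU$ recedes, so the constant $C_G$ of Lemma~\ref{don.1} is unaffected and Lemma~\ref{del.1} is not needed. Where Lemma~\ref{del.1} genuinely reappears is in your core reduction: the Dirichlet-replacement error $O(e^{-c(R'-R)^2/t})$ does not vanish with $\eps$, so it can only be made small by pushing the cutoff to depth $\approx R'$ in the ends, and the constant of Lemma~\ref{don.1} at that depth must then be controlled --- in the cusp ends it grows like $e^{4R'}$, beaten by the Gaussian --- which is exactly the difficulty the paper faces in Lemma~\ref{fpas.8}. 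Two small corrections: for the surgery at a point the $g_\eps$-distance from the filled-in cap to a fixed collar grows like $\log\abs{\log\eps}$, not $c_0\abs{\log\eps}$ (the radial length element is $\sim dr/(r\abs{\log r})$ for $r\gg\eps$), though all the argument needs is that it tends to infinity; and your final quantifier order should be: choose $R$, then $R'$, then $\eps$, since the Dirichlet-reduction error is controlled by $R'$ rather than by $\eps$.
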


Intuitively, we expect Theorem~\ref{fpsa.3} to hold  from the fact the singular behavior of $g_\eps$ and $h_{\eps}$ should cancel out.  What allows us to turn these local considerations into a statement about the heat kernels is a finite propagation speed argument.  We will proceed in three steps.  We will use the notation $S$ to denote either $\{p\}$ or $\pa_i M$ depending on whether we are considering a conformal surgery at a point or at boundary component.

\begin{lemma}
There exist constants $d>0$ and $C>0$ such that
$$
   | \Tr( e^{-t\Delta_{g_{\epsilon}}} - e^{-t\Delta_{h_{\epsilon}}}) -\Tr( e^{-t\Delta_{g_{0}}} - e^{-t\Delta_{h_{0}}}) | < C t e^{-\frac{d^2}{8t}} $$
for all $t>0$ and $\epsilon\in [0,1]$.
\label{sta.1}\end{lemma}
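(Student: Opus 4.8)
The plan is to reduce, by Duhamel's principle, to a short-time off-diagonal estimate for the heat kernels, and then to invoke the finite-propagation bounds of Lemma~\ref{don.1} with constants uniform in $\epsilon$. Uniformity is possible because the conformal surgery alters the metrics only inside the region $\mathcal V'=\{\psi_\epsilon\neq1\}\subset\cV$, which is at a fixed positive distance from the compact set $\mathcal K:=\overline{K\setminus\cV}$ on which $g_\epsilon$ and $h_\epsilon$ actually differ; moreover on $\mathcal K$ one has $g_\epsilon=g_0$ and $h_\epsilon=h_0$, independently of $\epsilon$. After conjugating the $h$-Laplacians by the positive function $\rho_\epsilon=(\dvol_{h_\epsilon}/\dvol_{g_\epsilon})^{1/2}$ (which is $\equiv1$ off $\mathcal K$ and does not depend on $\epsilon$), so that all semigroups act on $L^2(M,\dvol_{g_\epsilon})$ and the relative trace is unchanged, the operator $D:=\Delta_{g_\epsilon}-\widetilde\Delta_{h_\epsilon}$ is second order, supported in $\mathcal K$, and is literally the same operator for every $\epsilon\in[0,1]$, including $\epsilon=0$. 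Duhamel's principle then gives
\[
   \Tr\bigl(e^{-t\Delta_{g_\epsilon}}-e^{-t\widetilde\Delta_{h_\epsilon}}\bigr)=-\int_0^t\Tr\bigl(e^{-s\widetilde\Delta_{h_\epsilon}}e^{-(t-s)\Delta_{g_\epsilon}}D\bigr)\,ds,
\]
so the quantity to estimate equals $-\int_0^t\Tr\bigl[(B_\epsilon(s,t)-B_0(s,t))D\bigr]\,ds$, where $B_\epsilon(s,t)=e^{-s\widetilde\Delta_{h_\epsilon}}e^{-(t-s)\Delta_{g_\epsilon}}$ and $D$ is fixed. (The second-order operator $D$ is harmless here: it sits between smoothing heat factors, and half of an adjacent factor may be moved across it.)

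The next step is to replace, inside $B_\epsilon$, each of the semigroups $e^{-\tau\widetilde\Delta_{h_\epsilon}}$ and $e^{-\tau\Delta_{g_\epsilon}}$ by its $\epsilon=0$ counterpart by means of a relative parametrix, in the style of the $\cW_2\times\cW_2$ analysis in the proof of Proposition~\ref{utn.1}. Thus I patch $e^{-\tau\widetilde\Delta_{h_\epsilon}}$ near $\mathcal V'$ together with $e^{-\tau\widetilde\Delta_{h_0}}$ away from it, using cut-off functions supported in the collar $\cV\setminus\mathcal V'$; the Duhamel error is a time-convolution of the true $\epsilon$-heat semigroup against the first-order commutator operators $[\widetilde\Delta_h,\chi]$, which have smooth, $\epsilon$-independent coefficients supported in $\cV\setminus\mathcal V'$. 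Using a parametrix rather than Duhamel-ing directly with $\Delta_{g_\epsilon}-\Delta_{g_0}$ is essential: the latter has coefficients blowing up at $S$ and there is no global Sobolev inequality --- exactly the obstacle flagged in the Introduction --- whereas the commutator errors are perfectly controlled. Carrying this out expresses $B_\epsilon-B_0$ as a bounded sum of iterated time-integrals, over bounded intervals, of compositions of genuine heat semigroups, each composition containing at least one factor $P_G\,e^{-\tau\Delta_\bullet}\,P_{G'}$ (or its adjoint) with $G$ a fixed neighbourhood of $\mathcal K$ and $G'$ a fixed neighbourhood of $\mathcal V'$.

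Inserting these expansions into $\Tr[(B_\epsilon-B_0)D]$ and using that $D$ is supported in $\mathcal K$, only the kernels near $\mathcal K$ enter, and each term carries a factor joining a neighbourhood of $\mathcal K$ to $\cV\setminus\mathcal V'$. Now $d:=\dist_{g_\epsilon}(\mathcal K,\mathcal V')$ is bounded below uniformly in $\epsilon$: any path from $\mathcal V'$ to $\mathcal K$ must cross the collar $\cV\setminus\mathcal V'$ where all four metrics equal $g$, and the surgery only stretches $\mathcal V'$ as $\epsilon\to0$. Lemma~\ref{don.1}, applied with $G$ this fixed neighbourhood of $\mathcal K$ --- on which the metric is $\epsilon$-independent, so that its constant $C_G$ is too --- then bounds each such factor in Hilbert--Schmidt norm by $C e^{-d^2/8\tau}$. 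Combining the factors via $\|AB\|_{\Tr}\le\|A\|_{\HS}\|B\|_{\HS}$ and $\|e^{-\tau\Delta_\bullet}\|\le1$, and observing that for the inner Duhamel variables $\tfrac1{s-\tau}+\tfrac1{\tau}\ge\tfrac4s\ge\tfrac4t$, all the time-integrations except the outermost $ds$ are absorbed at the cost of a slightly smaller constant in the exponent, while the remaining $ds$-integral over $[0,t]$ contributes the factor $t$. This produces a bound $C'\,t\,e^{-d_0^2/ct}$ for some $d_0,c>0$, and shrinking $d_0$ to a suitable $d$ and enlarging the constant yields the stated estimate $Cte^{-d^2/8t}$ for all $t>0$ and $\epsilon\in[0,1]$. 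The heart of the proof --- the point requiring the ``new twist'' of the Introduction --- is this uniformity in $\epsilon$: it is achieved by never differentiating the singular family $g_\epsilon$ directly, and by confining all off-diagonal heat-kernel estimates to the fixed, $\epsilon$-independent region around $\mathcal K$.
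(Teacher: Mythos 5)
Your proof is correct in its essentials and reaches the stated estimate, but it organizes the argument quite differently from the paper. The paper's proof is a two--region decomposition at the level of on--diagonal kernel integrals: it picks $\cW_1$ (a neighbourhood of $\cU\cup\cV$ on which $g_\epsilon=h_\epsilon$ for all $\epsilon$) and $\cW_2$ (a compact neighbourhood of $K\setminus\cV$ on which $g_\epsilon=g_0$ and $h_\epsilon=h_0$), with $\cW_1\cup\cW_2=M$. On $\cW_1$ it shows $\int_{\cW_1\setminus S}\lvert e^{-t\Delta_{g_\epsilon}}(x,x)-e^{-t\Delta_{h_\epsilon}}(x,x)\rvert\,dx<Cte^{-d^2/8t}$ by a one--layer Duhamel parametrix patching $e^{-t\Delta_{g_\epsilon}}$ and $e^{-t\Delta_{h_\epsilon}}$ (valid since the two metrics coincide near $\cW_1$), and the same bound holds at $\epsilon=0$, so the $\cW_1$ contribution to your difference is controlled without any comparison across $\epsilon$. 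On $\cW_2$ it bounds $\int_{\cW_2}\lvert e^{-t\Delta_{g_\epsilon}}(x,x)-e^{-t\Delta_{g_0}}(x,x)\rvert\,dx$ and the analogous $h$--difference by the same one--layer scheme. Your route instead conjugates the $h$--Laplacians by the $\epsilon$--independent density ratio to obtain a fixed, compactly supported second--order operator $D$, applies Duhamel once at the operator level to write the relative trace as $-\int_0^t\Tr[B_\epsilon(s,t)D]\,ds$, and then compares $B_\epsilon$ to $B_0$ by a second Duhamel parametrix. The conjugation trick is a nice idea the paper does not use, and it makes the $\epsilon$--independence of what ultimately must be estimated manifest. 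The price is a double layer of time--convolutions: when you expand $B_\epsilon-B_0$ you get iterated $ds\,d\sigma$ integrals, and the step where you claim that ``all the time--integrations except the outermost $ds$ are absorbed at the cost of a slightly smaller constant in the exponent'' needs to be done with care so that the extra integration does not produce a spurious power of $t$ (the inequality $\tfrac1\tau+\tfrac1{s-\tau}\ge\tfrac4s$ leaves an $s$ from the inner integral, so one must carry that through and absorb the resulting $t^2e^{-d^2/ct}$ into $Cte^{-d^2/8t}$ after shrinking $d$, which does work for all $t>0$). The paper avoids this bookkeeping altogether because its $\cW_1$ estimate is a stand--alone smallness bound rather than an $\epsilon$--comparison, and each of its Duhamel expansions has only one time--convolution. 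A small imprecision in your writeup: the error terms from the parametrix patching are supported on the cut--off annulus in the collar $\cV\setminus\cV'$ (where all four metrics agree), not on a neighbourhood of $\cV'$, and your $B_\epsilon$ should be $e^{-(t-s)\Delta_{g_\epsilon}}e^{-s\widetilde\Delta_{h_\epsilon}}$ after the cyclicity move --- both cosmetic. Both approaches hinge on the same mechanism: all the finite--propagation--speed bounds of Lemma~\ref{don.1} are invoked only on $\epsilon$--independent metric regions, so the constants can be taken uniform, and the naive Duhamel against $\Delta_{g_\epsilon}-\Delta_{g_0}$ (whose coefficients degenerate at $S$) is carefully avoided.
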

\begin{proof}
Note that we expect such a rapid decay as $t$ tends to zero from the fact that the short time asymptotics of these heat kernels cancel out.  The precise estimate will be obtained via a finite speed propagation argument.

Let $\cW_1$, $\wt\cW_1$, $\cW_2$ and $\wt\cW_2$ be open sets as in the proof of Proposition~\ref{utn.1}.
To estimate the difference of relative traces on $\cW_1$, let $\phi\in \CI(M)$ be a function with $\phi\equiv 1$ near $\cW_1$ and $\supp \phi \subset \wt\cW_1$.  Let $\chi\in \CI(M)$ be another function with $\chi\equiv 1$ on $\supp \phi$ and $\supp\chi\subset \wt\cW_1$.  Let also $\gamma\in \CI_c(M)$ be a function with $\gamma\equiv 1$ on $\supp(1-\phi)$.  Consider then the approximate heat kernel
$$
  E(t,x,y)= \gamma(x) e^{-t\Delta_{g_{\epsilon}}}(x,y) (1-\phi(y))+
  \chi(x) e^{-t\Delta_{h_{\epsilon}}}(x,y)\phi(y).
$$

By Duhamel's principle, we have that
\begin{equation}
  e^{-t\Delta_{g_{\epsilon}}}(x,y) - E(t,x,y)= \\
  - \int_0^t \int_{M\setminus S}
  e^{-s\Delta_{g_{\epsilon}}}(x,z) \left(\frac{\pa}{\pa t} +\Delta_{g_{\epsilon}}\right) E(t-s,z,y) dzds.
\label{sta.2}\end{equation}
If we assume now that $x$ and $y$ are equal and lie in $\cW_1$, then $E_{\epsilon}(t,x,x)= e^{-t\Delta_{h_{\epsilon}}}(x,x)$, so proceeding as in \cite[Proposition~5.1]{Donnelly1987}, we have that
\begin{multline}
\int_{\cW_1\setminus S} | e^{-t\Delta_{g_{\eps}}}(x,x)- e^{-t\Delta_{h_{\eps}}}(x,x)| dx \le \\
C \int_0^t \int_G \left( \int_{\cW_1\setminus S} | e^{-s\Delta_{g_{\eps}}}(x,z)|^2 dx\right)^{\frac12} \\ 
\left( \left( \int_{\cW_1\setminus S} | \nabla_ze^{-(t-s)\Delta_{h_{\eps}}}(z,x)|^2 dx\right)^{\frac12} +  
\left( \int_{\cW_1\setminus S} |e^{-(t-s)\Delta_{h_{\eps}}}(z,x)|^{2} dx\right)^{\frac12} \right)dz ds,
\label{sta.3}\end{multline} where $C>0$ is a constant depending on the norm of $d\chi$ and $\Delta_h\chi$ and $G\subset \wt\cW_1\setminus \cW_1$ is a compact set containing the support of $d\chi$.  By Lemma~\ref{don.1}, we have
\begin{equation}
\begin{gathered}
\int_{\cW_1\setminus S} | e^{-s\Delta_{g_{\eps}}}(x,z)|^2 dx \le C_1 e^{-\frac{d^2}{8s}}, \quad  \int_{\cW_1\setminus S} 
|e^{-(t-s)\Delta_{h_{\eps}}}(z,x)|^2 dx \le C_1 e^{-\frac{d^2}{8(t-s)}},  \\
\int_{\cW_1\setminus S} | \nabla_ze^{-(t-s)\Delta_{h_{\eps}}}(z,x)|^2 dx \le C_1 e^{-\frac{d^2}{8(t-s)}},
\end{gathered}
\label{sta.4}\end{equation}
where $C_1$ is a constant depending on $G$, so is independent of $\epsilon$, while $d>0$ is chosen to be smaller than the distance between $\cW_1$ and $\supp d\chi$ with respect to the metric $h$ (recall that $h=h_{\epsilon}=g_{\epsilon}=g$ on $\wt\cW_1\setminus \cW_1$). Combining \eqref{sta.3} and \eqref{sta.4}, we obtain 
\begin{equation}
\int_{\cW_1\setminus S} | e^{-t\Delta_{g_\eps}}(x,x) - e^{-t\Delta_{h_\eps}}(x,x)|dx < {2}\Vol(G)C C_1 t e^{-\frac{d^2}{8t}} \quad \forall \ \epsilon\in [0,1], \forall \ t>0.
\label{sta.5}\end{equation}

Using the fact that $g_\epsilon=g_0$ and $h_{\epsilon}=h_0$ on $\wt\cW_2$ and that $g_\epsilon=h_\epsilon=g=h$ on $\wt\cW_2\setminus \cW_2$, we can proceed in a similar way to obtain that
\begin{equation}
\begin{gathered}
\int_{\cW_2} | e^{-t\Delta_{g_\eps}}(x,x) - e^{-t\Delta_{g_0}}(x,x)|dx < C t e^{-\frac{d^2}{8t}}, \\
\int_{\cW_2} | e^{-t\Delta_{h_\eps}}(x,x) - e^{-t\Delta_{h_0}}(x,x)|dx < C t e^{-\frac{d^2}{8t}},
\end{gathered}
\label{sta.6}\end{equation}
for all $\epsilon\in [0,1]$  and  $ t>0$, for potentially different constants $C>0$ and $d>0.$
Finally, combining \eqref{sta.5} and \eqref{sta.6} gives the result.

\end{proof}
An easy consequence of Lemma~\ref{sta.1} is the following.
\begin{corollary}
The relative heat invariants in the asymptotic expansion
\begin{equation}\label{RelHeatInv}
    \Tr( e^{-t\Delta_{g_{\epsilon}}} - e^{-t\Delta_{h_{\epsilon}}} )
    \sim
    t^{-1} \sum_{k\geq 0} a_k(g_{\epsilon},h_{\epsilon}) t^k.
\end{equation}
are preserved under conformal surgery, i.e. $a_{k}(g_\eps,h_\eps)=a_{k}(g_0,h_0)$ for all $k$ and all $\epsilon$.
\label{relhin.1}\end{corollary}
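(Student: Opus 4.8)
The plan is to read the coefficients straight off Lemma~\ref{sta.1}, so the only real work is to make sure the expansion \eqref{RelHeatInv} is an honest asymptotic expansion. First I would note why \eqref{RelHeatInv} holds at all: since $g_\eps = h_\eps$ on $\cU\cup\cV$ and $g_\eps = g$, $h_\eps = h$ on $M\setminus\cV$, the Laplacians $\Delta_{g_\eps}$ and $\Delta_{h_\eps}$ agree off the fixed compact set $K\setminus\cV$, so the relative heat trace localizes there. The standard interior heat parametrix, together with a boundary parametrix near $\pa_b\overline{M}$, then yields a genuine asymptotic expansion of $\Tr(e^{-t\Delta_{g_\eps}}-e^{-t\Delta_{h_\eps}})$ in integer powers of $t$ starting at $t^{-1}$ (dimension two), with remainder $o(t^N)$ for every $N$; this is the content of the trace-class results of Bunke~\cite{Bunke1992} and Carron~\cite{Carron:DetRel} invoked above, and the same discussion applies to $(g_0,h_0)$ once one observes that these are again $\Fcb$-type metrics, now with an extra cusp, resp.\ funnel, end produced by the surgery.

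Granting this, the corollary is immediate. By Lemma~\ref{sta.1},
\[
    \Tr\lrpar{e^{-t\Delta_{g_\eps}}-e^{-t\Delta_{h_\eps}}}-\Tr\lrpar{e^{-t\Delta_{g_0}}-e^{-t\Delta_{h_0}}}=O\lrpar{t\,e^{-d^2/8t}}\quad\text{as }t\to 0^+,
\]
and the right-hand side vanishes faster than every power of $t$. Subtracting the two expansions \eqref{RelHeatInv}, one for $(g_\eps,h_\eps)$ and one for $(g_0,h_0)$, therefore gives
\[
    \sum_{k\ge 0}\lrpar{a_k(g_\eps,h_\eps)-a_k(g_0,h_0)}\,t^{k-1}=o(t^N)\quad\text{for every }N,
\]
and uniqueness of asymptotic expansions (the usual induction on $k$, peeling off the lowest-order term at each stage) forces each coefficient to vanish. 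Hence $a_k(g_\eps,h_\eps)=a_k(g_0,h_0)$ for all $k$ and all $\eps\in[0,1]$, as claimed.

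I do not expect a genuine obstacle here: all the analytic substance is in Lemma~\ref{sta.1}. The one point deserving care is precisely the claim that the error in \eqref{RelHeatInv} is $o(t^N)$ for \emph{all} $N$ rather than merely up to some finite order, since the ``vanishes to infinite order'' step depends on it; that is why I would spend a sentence on the locality of the relative trace and on the applicability of the classical heat-kernel asymptotics to the metrics at hand. One could even bypass Lemma~\ref{sta.1} entirely, since the $a_k$ are integrals of differences of \emph{local} heat coefficients over $K\setminus\cV$, where $g_\eps = g$ and $h_\eps = h$ do not depend on $\eps$, so $a_k(g_\eps,h_\eps)$ is manifestly $\eps$-independent; but the route through Lemma~\ref{sta.1} is the one the phrasing points to and is the more robust one.
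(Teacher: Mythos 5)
Your argument is correct and is exactly the one the paper intends: the corollary is stated as ``an easy consequence of Lemma~\ref{sta.1},'' namely that the difference of the two relative traces is $O(t\,e^{-d^2/8t})$, hence vanishes to infinite order as $t\to 0^+$, so the asymptotic expansions must agree coefficient by coefficient. Your added remarks on why \eqref{RelHeatInv} is a genuine expansion, and the alternative direct argument via locality of the heat coefficients on $K\setminus\cV$, are sound but go beyond what the paper records.
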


\begin{lemma}
Given $\delta>0$ and $T>0$, there exist an open set $A \subset \cV$ containing $S$ and $\eps_0>0$ such that for all $t<T$ and all $\eps\in [0,\eps_0)$,
\[
     \int_{A\setminus S} | e^{-t\Delta_{g_\eps}}(x,x) - e^{-t\Delta_{h_\eps}}(x,x)|dx <\delta.
\]
\label{fpsa.4}\end{lemma}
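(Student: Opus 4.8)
The plan is to use that $g_\eps$ and $h_\eps$ coincide on \emph{all} of $\cV$, so that near $S$ the relative diagonal heat kernel $w_\eps(t,x):=e^{-t\Delta_{g_\eps}}(x,x)-e^{-t\Delta_{h_\eps}}(x,x)$ can only be appreciable if heat has propagated out of $\cV$ and back to $x$; by finite propagation speed this costs a Gaussian factor $e^{-d_{g_\eps}(x,\,M\setminus\cV)^2/(8t)}$. One then estimates $\int_{A\setminus S}|w_\eps|\,dx$ by playing this decay against the volume growth of $A\setminus S$, which is genuinely different for a surgery at a point and at a boundary component.

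First I would fix a cut-off $\chi\in\CI(M)$ with $\chi\equiv1$ near $\overline A$ and with $\supp d\chi$ a \emph{fixed} compact set $G\subset\cV$ on which $g_\eps=h_\eps=g$ is independent of $\eps$ (e.g.\ inside $\{r>1/2\}$ in the notation of \eqref{fpsa.1}, resp.\ \eqref{fpsa.1b}), and run Duhamel's principle with these cut-offs exactly as in the proof of Lemma~\ref{sta.1}, obtaining for $x\in A\setminus S$ a representation $w_\eps(t,x)=-\int_0^t\!\int_G e^{-s\Delta_{g_\eps}}(x,z)\,E_\eps(t-s,z,x)\,dz\,ds$, with $E_\eps$ built from $\Delta_g\chi$ and $\nabla\chi\cdot\nabla e^{-\tau\Delta_{h_\eps}}$. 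Lemma~\ref{don.1}, applied with this fixed $G$, gives the $L^2$-in-$x$ bounds $\int_{A\setminus S}|e^{-s\Delta_{g_\eps}}(x,z)|^2\,dx\le C_G\,e^{-d^2/(8s)}$, and likewise for the $\nabla e^{-\tau\Delta_{h_\eps}}$ factor, with $C_G$ and $d=d_{g_\eps}(A\setminus S,G)\ge d_0>0$ uniform in $\eps\in[0,1]$. The crucial step is to upgrade these $L^2$ bounds to a \emph{pointwise} bound on $w_\eps(t,\cdot)$, since $\Vol_{g_\eps}(A\setminus S)$ need not be uniformly small. For a surgery at a boundary component the metrics $g_\eps$ have uniformly bounded geometry on $\cV$ (Gauss curvature $O(\eps^2/(\eps^2+r^2))$ up to bounded terms, injectivity radius bounded below), so the Sobolev embedding $L^2_3\hookrightarrow C^0$ holds with an $\eps$-independent constant and yields $|w_\eps(t,x)|\le C\,e^{-d_{g_\eps}(x,\,M\setminus\cV)^2/(8t)}$ with $C$ uniform in $\eps$ and $t\le T$. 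For a surgery at a point there is no uniform Sobolev inequality near the cusp; here the plan is to transplant the cut-off relative heat kernel to the model cusp $H$ via the conformal identification with $g_{\hc}$, check the $L^2$-pairing hypotheses of Lemma~\ref{del.1} with constants $\sum_{k,l=0}^3 C_{a+1,k,l}\lesssim e^{-c(a+1)^2/t}$ produced by the Duhamel representation and Lemma~\ref{don.1}, and invoke Lemma~\ref{del.1} to get $\sup_{\cU_a\times\cU_a}|\widetilde w_\eps|\le C(e^{4a}+1)\,e^{-c(a+1)^2/t}$; here the uniform conformal bound $g_\eps\le C g_0$ (equivalently $\dvol_{g_\eps}\le C\dvol_{g_0}$, from the proof of Theorem~\ref{csp.1}) keeps the distances, volumes and hence all constants uniform in $\eps$. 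Since $a\mapsto(e^{4a}+1)e^{-ca^2/T}$ is bounded, this gives $|w_\eps(t,x)|\le C_T$ on $A\setminus S$ for all $\eps$ and all $t\le T$.

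To finish I would integrate. For a surgery at a point, $\int_{A\setminus S}|w_\eps(t,x)|\,dx\le C_T\,\Vol_{g_\eps}(A\setminus S)$, and a direct computation with the explicit factor \eqref{fpsa.1} shows $\Vol_{g_\eps}(\{r<r_A\})=O(|\log r_A|^{-1})$ uniformly in $\eps\in[0,1]$; shrinking $A$ makes this $<\delta$, and then any $\eps_0>0$ works. For a surgery at a boundary component $\Vol_{g_\eps}(A\setminus S)\sim\eps^{-1}$ is \emph{not} uniformly small, so I would split $A\setminus S=\{r<\eps\}\cup\{\eps\le r<r_A\}$. On $\{\eps\le r<r_A\}$ one has $d_{g_\eps}(x,M\setminus\cV)\gtrsim\log(1/r(x))$ and $\dvol_{g_\eps}\lesssim r^{-2}\,dr\,d\theta$, so for $t\le T$ the integral is $\lesssim\int_{\log(1/r_A)}^{\infty}e^{\,L-cL^2/T}\,dL\lesssim r_A$, small as $A$ shrinks and uniform in $\eps$. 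On $\{r<\eps\}$ one has $d_{g_\eps}(x,M\setminus\cV)\gtrsim\log(1/\eps)$ and $\Vol_{g_\eps}(\{r<\eps\})\lesssim\eps^{-1}$, so the integral is $\lesssim e^{\,\log(1/\eps)-c\log^2(1/\eps)/T}\to0$ as $\eps\to0$, hence $<\delta$ once $\eps<\eps_0$. The case $\eps=0$, included in $[0,\eps_0)$, is handled the same way (the $\{r<\eps\}$ piece is then empty, the infinite funnel volume being tamed by the same Gaussian).

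The main obstacle is the pointwise bound for a surgery at a point: with no uniform Sobolev inequality near the cusp one must track the precise ($e^{4a}$) rate at which the $L^2\to C^0$ conversion degrades with the cusp depth --- which is exactly what Lemma~\ref{del.1} provides --- then verify that the Gaussian factor $e^{-ca^2/t}$ for $t\le T$ still dominates it, all while keeping every constant uniform in $\eps$ by comparing throughout to the fixed model metric $g_0$.
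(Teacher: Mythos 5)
Your Duhamel set-up (fixed cut-offs with $\supp d\chi$ in a fixed compact $G\subset\cV$ on which all the metrics agree, plus Lemma~\ref{don.1}) is exactly the right starting point, but the step you call ``crucial'' --- upgrading the $L^2$-in-$x$ bounds to a pointwise bound on the relative diagonal and then integrating against $\Vol_{g_\eps}(A\setminus S)$ --- is not needed, and it is where your argument develops a genuine gap. The estimate closes directly from the $L^2$ bounds: writing (in your notation)
\[
\int_{A\setminus S}|w_\eps(t,x)|\,dx\le \int_0^t\!\!\int_G\Bigl(\int_{A\setminus S}|e^{-s\Delta_{g_\eps}}(x,z)|^2dx\Bigr)^{1/2}\Bigl(\int_{A\setminus S}|E_\eps(t-s,z,x)|^2dx\Bigr)^{1/2}dz\,ds,
\]
one applies Cauchy--Schwarz in $x$ \emph{over} $A\setminus S$, so each factor is bounded by $C_G^{1/2}e^{-d^2/(16s)}$, resp.\ $C_G^{1/2}e^{-d^2/(16(t-s))}$, by Lemma~\ref{don.1} with the fixed $G$; the remaining integrations run over the fixed finite-volume set $G$ and over $s\in[0,t]$ with $t\le T$, giving a bound of the form $C\,\Vol(G)\,T\,e^{-d^2/(8T)}$, which is made $<\delta$ by shrinking $A$ and $\eps_0$ so that $d=d_{g_\eps}(A,G)$ is as large as desired for all $\eps\in[0,\eps_0]$. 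The volume of $A\setminus S$ never enters, so neither a sup-bound on $w_\eps$ nor the region-splitting at $r=\eps$ is required.

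The detour itself, as written, does not go through for the surgery at a point. You justify the uniformity in $\eps$ of the input to Lemma~\ref{del.1} by the conformal bound $g_\eps\le Cg_0$, but this inequality is one-sided and points the wrong way: it gives $d_{g_\eps}\le\sqrt{C}\,d_{g_0}$, whereas you need a \emph{lower} bound on $g_\eps$-distances from deep-cusp points to $G$ to produce the Gaussian $e^{-c(a+1)^2/t}$. In fact $\psi_\eps(r)\to0$ as $r\to0$ for fixed $\eps>0$, the metric $g_\eps$ is smooth at $p$, and $d_{g_\eps}(x,G)$ saturates (at roughly $\log(1/\eps)$) rather than growing with the $g_{\hc}$-depth $a$; so the claimed decay of $C_{a+1,k,l}$ in $a$, uniformly in $\eps$, is unjustified. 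Moreover Lemma~\ref{del.1} pairs $W$ against powers of the model Laplacian $\Delta_{g_{\hc}}$, while Duhamel and \cite{CGT1982} control pairings against powers of $\Delta_{g_\eps}$; converting one into the other involves derivatives of the degenerating conformal factor, which you do not control. These are precisely the difficulties the direct Cauchy--Schwarz argument avoids here; Lemma~\ref{del.1} is only needed later (Lemma~\ref{fpas.8}), where the set $G$ itself must be pushed toward infinity.
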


\begin{proof}
This is a finite propagation speed argument as in \cite[Proposition~5.1]{Donnelly1987}, namely we consider the approximate heat kernel
\begin{equation}
  E(t,x,y)= \gamma(x) e^{-t\Delta_{g_{\eps}}}(x,y)(1-\phi(y))+ \chi(x) e^{-t\Delta_{h_{\eps}}}(x,y)\phi(y),
\label{fpsa.5}\end{equation}
where $\phi,\chi$ and $\gamma$ are smooth functions on $M$ with $\phi\equiv 1$ near $p$ and $\phi\equiv 0$ when $r>\frac{1}{2}$ on $\cV$ (and more generally outside of $\cV$), $\chi \equiv 1$ when $r\le\frac12$  and $\chi\equiv 0$ when $r>\frac34$, and $\gamma\equiv 1$ on $\supp (1-\phi)$ and $\supp \gamma$ is disjoint from $p$.  Let $G$ be a compact set containing the support of $d\chi$ and choose $A\subset \cV$ such that $\phi\equiv 1$ on $A$.  By choosing $A$ and $\eps_0>0$ sufficiently small, we can insure
$d_{g_{\eps}}(A,G)>d$ when $\eps\in [0,\eps_0]$,  where $d$ is a large positive number to be chosen later.  Using Duhamel's principle as in \cite[Proposition~5.1]{Donnelly1987}, we have
\begin{multline}
\int_{A\setminus S} | e^{-t\Delta_{g_{\eps}}}(x,x)- e^{-t\Delta_{h_{\eps}}}(x,x)| dx \le \\
C \int_0^t \int_G \left( \int_{A\setminus S} | e^{-s\Delta_{g_{\eps}}}(x,z)|^2 dx\right)^{\frac12}\\ 
\left( \left( \int_{A\setminus S} | \nabla_ze^{-(t-s)\Delta_{h_{\eps}}}(z,x)|^2 dx \right)^{\frac12} +
\left( \int_{A\setminus S} |e^{-(t-s)\Delta_{h_{\eps}}}(z,x)|^2 dx\right)^{\frac12} \right) dz ds,
\label{fpas.6}\end{multline}
%\begin{multline}
%\int_{A\setminus S} | e^{-t\Delta_{g_{\eps}}}(x,x)- e^{-t\Delta_{h_{\eps}}}(x,x)| dx \le \\
%C \int_0^t \int_G \left( \int_{A\setminus S} | e^{-s\Delta_{g_{\eps}}}(x,z)|^2 dx\right)^{\frac12}\\ \left( \int_{A\setminus S} \left%( | \nabla_ze^{-(t-s)\Delta_{g_{\eps}}}(z,x)|^2 +|e^{-(t-s)\Delta_{g_{\eps}}}(z,x)|^2\right) dx\right)^{\frac12} dz ds,%
%\label{fpas.6}\end{multline}
where $C$ is a constant depending on $G$.  By Lemma~\ref{don.1} and using the fact $g_{\epsilon}=g$ and $h_{\epsilon}=h$ near $G$, we have
\begin{equation}
\begin{gathered}
\int_{A\setminus S} | e^{-t\Delta_{g_{\eps}}}(x,z)|^2 dx \le C_1 e^{-\frac{d^2}{8t}}, \quad \int_{A\setminus S} |e^{-t\Delta_{h_{\eps}}}(z,x)|^2 dx \le C_1 e^{-\frac{d^2}{8t}}, \\ \int_{A\setminus S} | \nabla_ze^{-t\Delta_{h_{\eps}}}(z,x)|^2 dx \le C_1 e^{-\frac{d^2}{8t}},
\end{gathered}
\label{fpas.7}\end{equation}
where the positive constant $C_1$ only depends on $G$ and is thus independent of $\eps$.  By taking $A$ and $\eps_0$ sufficiently small, we can make $d$ as large as we want.  From \eqref{fpas.6}, given $\delta>0$ and $T>0$, we can thus choose $A$ and $\eps_0$ so that
\[
     \int_{A\setminus S} | e^{-t\Delta_{g_\eps}}(x,x) - e^{-t\Delta_{h_\eps}}(x,x)|dx <\delta,
\]
for all $\eps\in [0,\eps_0]$ and $t\in (0,T]$.
\end{proof}
We need also to control the trace on the complement of $A$.

\begin{lemma}
Let $N\subset M$ be an open set with $N\cap S= \emptyset$.  Then given $\delta>0$ and $T>\nu>0$, there exists $\eps_0>0$ such that for all $\eps\in [0,\eps_0]$ and $t\in [\nu,T]$,
\[
   \int_N | e^{-t\Delta_{g_{\eps}}}(x,x) - e^{-t\Delta_{g_0}}(x,x)|dx <\delta,
   \;
   \int_N | e^{-t\Delta_{h_{\eps}}}(x,x) - e^{-t\Delta_{h_0}}(x,x)|dx <\delta.
\]
\label{fpas.8}\end{lemma}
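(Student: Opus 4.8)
The plan is to split $N$ into a relatively compact piece and a piece running out into the funnel and cusp ends of $M$, and to handle these two regions by different mechanisms; the two displayed estimates are proved by the same argument with $g$ replaced by $h$, so I will only discuss the first. I will use freely that $g_\eps$ and $g_0$ coincide off the fixed set $\cV_0\subset\cV$ on which $\psi_\eps\ne1$, that $\psi_\eps\le C_0$ uniformly in $\eps$ (so $g_\eps\le C_0g_0$, as in the proof of Theorem~\ref{csp.1}), and that on $\cV_0$ away from $S$ one has $\psi_\eps\to1$ in $\CI$ as $\eps\to0$. Since $\overline N$ (closure in $\overline M$) is disjoint from $S$ — this is what the applications provide, $N$ being the complement of a neighbourhood of $S$, and it is genuinely needed, as otherwise the left-hand integrals diverge for $\eps>0$ — I may fix a small neighbourhood $\cV_*\Subset\cV_0$ of $S$ with $d_g(\cV_*,N)>0$, and by shrinking $\cV_*$ I can make $d_g(\cV_*,\Omega)$ as large as I wish for any fixed compact $\Omega$; set $\eta(\eps):=\|\Delta_{g_\eps}-\Delta_{g_0}\|_{C^0(M\setminus\cV_*)}$, so that $\eta(\eps)\to0$.

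First I would dispose of the ends. Fix a relatively compact open $\Omega\subset M$. On $N\setminus\Omega$ all three metrics equal $g$, so for $x$ there finite propagation speed shows that both $e^{-t\Delta_{g_\eps}}(x,x)$ and $e^{-t\Delta_{g_0}}(x,x)$ agree, up to an error $\le C(x)e^{-d_g(x,\cV_0)^2/(32t)}$, with the Dirichlet heat kernel of $g$ on the geodesic ball $B_g\big(x,\tfrac12 d_g(x,\cV_0)\big)$ — which is the same kernel for both metrics, that ball avoiding $\cV_0$ — so their difference is $\le 2C(x)e^{-d_g(x,\cV_0)^2/(32t)}$. Here $C(x)$ is the constant of Lemma~\ref{don.1}: on a funnel end it is bounded uniformly (bounded geometry), and on a cusp end it is controlled by Lemma~\ref{del.1}, which gives $C(x)\le C(e^{4a(x)}+1)$ with $a(x)$ the cusp height of $x$. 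Since $d_g(x,\cV_0)$ grows linearly in $a(x)$ along a cusp end, where the volume decays, and along a funnel end the volume grows only exponentially in $d_g(x,\cV_0)$, the Gaussian factor dominates; therefore $\int_{N\setminus\Omega}|e^{-t\Delta_{g_\eps}}(x,x)-e^{-t\Delta_{g_0}}(x,x)|\,dx\to0$ as $\Omega\uparrow M$, uniformly for $\eps\in[0,1]$ and $t\in[\nu,T]$, and I fix $\Omega$ so that this integral is $<\delta/2$.

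On $\Omega$ I would run the Duhamel/approximate-kernel argument of Lemmas~\ref{sta.1} and~\ref{fpsa.4}, with cut-offs arranged so that the approximate heat kernel equals $e^{-t\Delta_{g_\eps}}(x,x)$ for $x\in\Omega$ and all commutator and potential terms are supported in $\cV_0$; after integrations by parts as in the proof of Lemma~\ref{don.1} the resulting error splits into a part supported in $\cV_0\setminus\cV_*$, where $\psi_\eps\to1$ uniformly and whose contribution is therefore $\le\eta(\eps)C_1$ with $C_1$ uniform in $\eps$ and $t\in[\nu,T]$ (Hilbert–Schmidt estimates of Lemma~\ref{don.1}), and a part supported in $\cV_*$, whose contribution is $\le C_2e^{-d_g(\cV_*,\Omega)^2/(8t)}$ by finite propagation speed — with $C_2$ uniform in $\eps$ since $g_\eps\le C_0g_0$ and $g_0$ is the fixed cusp (or funnel) model, so the relevant constants are bounded by Lemma~\ref{del.1} resp.\ by bounded geometry. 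Thus $|e^{-t\Delta_{g_\eps}}(x,x)-e^{-t\Delta_{g_0}}(x,x)|\le C_2e^{-d_g(\cV_*,\Omega)^2/(8T)}+\eta(\eps)C_1$ on $\Omega\times[\nu,T]$; shrinking $\cV_*$ to make the first term $<\delta/(4\Vol_g(\Omega))$ and then picking $\eps_0$ with $\eta(\eps)C_1<\delta/(4\Vol_g(\Omega))$ for $\eps<\eps_0$ gives $\int_\Omega|\cdots|\,dx<\delta/2$, and adding the two regions finishes the proof. The hard part is the tail on the cusp ends: there $e^{-t\Delta_{g_0}}(x,x)$ is itself unbounded and the naive finite-propagation constants degenerate as one approaches a cusp, so one really does need the quantitative estimate of Lemma~\ref{del.1}, together with the domination $g_\eps\le C_0g_0$, to win the competition between the exponential volume growth (on funnels) or the cusp blow-up and the heat kernel's Gaussian decay uniformly in $\eps$; the compact region, by contrast, is routine once one observes that finite propagation speed confines the part of the perturbation that fails to converge, living in $\cV_*$, to an exponentially small contribution.
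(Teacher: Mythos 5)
Your overall strategy (Duhamel with an approximate heat kernel, finite propagation speed via Lemma~\ref{don.1}, and Lemma~\ref{del.1} to track how the constants degenerate towards the cusp) is the right one and matches the paper's, but the step on which the whole lemma turns --- uniformity in $\eps$ of the constant attached to the part of the Duhamel error living near $S$ --- is asserted rather than proved, and the justification you give does not work. You claim the $\cV_*$-supported contribution is $\le C_2e^{-d^2/(8t)}$ with ``$C_2$ uniform in $\eps$ since $g_\eps\le C_0g_0$''. If that contribution really involves the kernel of $e^{-(t-s)\Delta_{g_\eps}}$ (or $\Delta_{g_\eps}e^{-(t-s)\Delta_{g_\eps}}$) at points $z\in\cV_*$ arbitrarily close to $S$, then you need the estimate of Lemma~\ref{don.1} for $g_\eps$ with $G$ a neighbourhood of $S$, and its constant is \emph{not} controlled by the quasi-isometry $g_\eps\le C_0g_0$: it comes from Dirichlet eigenvalues and a Sobolev embedding on a neighbourhood of $G$, and these degenerate as $\eps\to0$ because the injectivity radius of $g_\eps$ near $p$ collapses --- exactly the difficulty the paper flags in the introduction, and which Lemma~\ref{del.1} only resolves for the \emph{fixed} cusp model $g_0=g_{\hc}$, not for $g_\eps$. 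If instead you arrange the cutoffs as in the paper, so that the only terms near $S$ are commutators supported on $\supp d\chi$, a compact set at positive distance from $S$, then $C_2$ is genuinely \emph{not} uniform: by Lemma~\ref{del.1} it grows like $e^{4d}$ as the cutoff is pushed towards the cusp, and the argument closes only because $e^{4d}e^{-d^2/(8T)}\to0$; moreover the constant for $g_\eps$ on that fixed support is obtained from $C_{G,g_0}$ by the $\CI$-convergence $g_\eps\to g_0$ away from $S$ (fix $G$ first, then let $\eps\to 0$), not from the one-sided bound $g_\eps\le C_0g_0$. You invoke this $e^{4a}$-versus-Gaussian competition for the tails of the ends, but it is the near-$S$ commutator term where it is indispensable. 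A second, smaller error: for a surgery at a boundary component, $d_g(\cV_*,\Omega)$ stays \emph{bounded} as $\cV_*$ shrinks, since $g$ is incomplete at $\pa_iM$; distances must be measured in $g_0$ (the complete funnel) and in $g_\eps$, using $d_{g_\eps}\to d_{g_0}$ for a fixed cutoff, as in \eqref{fpas.13}.

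On organization: the paper does not split $N$ into a compact piece and a tail. Lemma~\ref{don.1} allows the set $\cU$ over which one integrates in $x$ to be all of $N$ (only $G$ need be compact), so a single Duhamel estimate of the form \eqref{fpas.12}--\eqref{fpas.13} covers the ends automatically; your separate tail argument, comparing $e^{-t\Delta_{g_\eps}}(x,x)$ and $e^{-t\Delta_{g_0}}(x,x)$ to a local Dirichlet kernel deep in each end, is not wrong in spirit but requires pointwise (not $L^2$-in-$x$) heat-kernel comparisons with constants depending on the cusp depth, uniformly in $\eps$, which is additional work the paper's formulation avoids. Your observations that $N$ must stay away from a whole neighbourhood of $S$ and that $t\ge\nu>0$ is needed for the $d=0$ case of Lemma~\ref{don.1} (the term carrying the factor $1-e^{\varphi_\eps}$ on a fixed compact set, made small by uniform convergence $\psi_\eps\to1$ there) are both correct and do match the paper.
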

\begin{proof}
We will prove the lemma for the metric $g_{\eps}$, the proof being the same for the metric $h_{\eps}$.  Without loss of generality, by taking $N$ bigger if needed, we can assume $M\setminus N$ is contained in the neighborhood $\cV$.  This time, we consider $\phi\in \CI(M)$ with $\phi\equiv 1$ on $N$ and $\phi\equiv 0$ near $S$, $\chi\in \CI(M)$ with the same properties and such that $\chi\equiv 1$ on the support of $\phi$, and we choose a function $\gamma\in \CI_c(\overline{\cV})$ with $\gamma\equiv 1$ on $\supp(1-\phi)$.  With these functions, we define the approximate heat kernel
\begin{equation}
E(t,x,y)= \gamma(x) e^{-t\Delta_{g_{0}}}(x,y)(1-\phi(y))+
  \chi(x)e^{-t\Delta_{g_{\eps}}}(x,y)\phi(y).
\label{fpas.9}\end{equation}
Using Duhamel's principle, we have
\begin{equation}
  e^{-t\Delta_{g_0}}(x,y) - E(t,x,y)=
  - \int_0^t \int_{M\setminus S}
  e^{-s\Delta_{g_0}}(x,z) \left(\frac{\pa}{\pa t} +\Delta_{g_0}\right) E(t-s,z,y) dzds.
\label{fpas.10}\end{equation}
Suppose now that $x$ and $y$ are equal and lie in $N$.  Then $E(t,x,x)= e^{-t\Delta_{g_{\eps}}}(x,x)$. Writing $g_{\eps}= e^{\varphi_\eps}g_0$, we also have,
\begin{equation}
\begin{aligned}
\left( \frac{\pa}{\pa t} + \Delta_{g_0} \right) e^{-t\Delta_{g_{\eps}}} &=
  \left( \frac{\pa}{\pa t} + \Delta_{g_\eps} + (e^{\varphi_{\eps}}-1)\Delta_{g_{\eps}} \right) e^{-t\Delta_{g_{\eps}}}\\
  &=
  (e^{\varphi_{\eps}}-1)\Delta_{g_{\eps}} e^{-t\Delta_{g_{\eps}}}.
\end{aligned}
\label{fpas.11}\end{equation}
Thus, from \eqref{fpas.10},  when $x$ and $y$ are equal and lie in $N$, we have
\begin{equation}
\begin{aligned}
e^{-t\Delta_{g_0}}(x,x) - e^{-t\Delta_{g_{\eps}}}(x,x) &=
2\int_{0}^t \int_G e^{-s\Delta_{g_0}}(x,z) \langle \nabla_z\chi, \nabla_z e^{-(t-s)\Delta_{g_{\eps}}}(z,x) \rangle dzds \\
&-\int_{0}^t \int_G e^{-s\Delta_{g_0}}(x,z) (\Delta_{g_0}\chi(z)) e^{-(t-s)\Delta_{g_{\eps}}}(z,x) dzds  \\
&+
\int_0^t \int_{G'} e^{-s\Delta_{g_0}}(x,z) (1-e^{\varphi_\eps}) \chi(z) \Delta_{g_{\eps}} e^{-(t-s)\Delta_{g_{\eps}}} (z,x) dzds,
\end{aligned}\label{fpas.12}\end{equation}
where $G = \supp d\chi \subset M\setminus S$ is a compact set %containing the support of $d\chi$
and $G'\subset \cV\setminus S$ is a compact set containing $\supp \chi \cap \supp (1-e^{\varphi_\eps})$.

When we integrate with respect to $x$ on $N$, the first two terms on the right hand side of \eqref{fpas.12} can be bounded as before by
\begin{multline*}
C\int_0^t \int_G \left( \int_N |e^{-s\Delta_{g_0}}(x,z)|^2 dx\right)^{\frac12}\cdot \\
  \left( \left( \int_N |e^{-(t-s)\Delta_{g_{\eps}}}(z,x)|^2 dx\right)^{\frac12} + \left( \int_N |\nabla_z e^{-(t-s)\Delta_{g_{\eps}}}(z,x)|^2 dx \right)^{\frac12} \right) dzds,
\end{multline*}
%\begin{equation}
%C\int_0^t \int_G \left( \int_N |e^{-s\Delta_{g_0}}(x,z)|^2 dx\right)^{\frac12}
%   \left( \int_N |e^{-(t-s)\Delta_{g_{\eps}}}(z,x)|^2 +|\nabla_z e^{-(t-s)\Delta_{g_{\eps}}}(z,x)|^2 dx \right)^{\frac12} dzds,
%\label{fpas.13}\end{equation}
where $C$ is a constant depending on the norm of $d\chi$ and $\Delta_{g_0}\chi$ on $G$ with respect to the norm of $g_{\eps}$.  By Lemma~\ref{don.1},
we can  bound this expression
by
\begin{equation}\widetilde{C}\sqrt{\Vol(G,g_0)\Vol(G,g_\epsilon)}\sqrt{C_{G,g_0}C_{G,g_{\epsilon}}} e^{-\frac{d_0^2+d_\eps^2}{16t}}
\label{fpas.13}\end{equation} where $d_{\eps}$ is the distance between $G$ and $N$ with respect to the metric $g_{\eps}$,
 $\widetilde{C}$ is a constant depending on $C$ and $\nu$, and $C_{G,g_{\epsilon}}$ is the optimal constant for the estimate of Lemma~\ref{don.1} for $g_{\epsilon}$ on $G$.

 In the case of a surgery at a boundary component, using the fact $g_0$ is quasi-isometric to a $\Fcb$-metric which is hyperbolic near infinity, we see using Lemma~\ref{del.1} with the constant $a$ fixed that  we can take $C_{G,g_0}$ to be independent of $G$.  On the other hand, for fixed $G$, we can take the constant $C_{G,g_\epsilon}$ to be as close as we want to $C_{G,g_0}$ by taking $\eps$ sufficiently small.    Now, at the cost
 of changing $\chi$ and taking $\eps$ sufficiently small, we can make $d_0$ and $d_\eps$ as large as we want.
Choosing $\chi$ to be independent of $\theta$ near $p$, this can be achieved while keeping the norm of $d\chi$ bounded by a fixed constant $K$ with respect to $g_0$, so that its norm with respect to $g_{\eps}$ will be bounded by $K+1$ if $\eps$ is small enough. During such a procedure, the volume of $G$ with respect to $g_0$  satisfies an estimate of the form $\Vol(G,g_0)\le C_0e^{d_0}$ for some fixed constant $C_0$, and again, for fixed $G$, by taking $\epsilon$ small enough, we can make $\Vol(G,g_{\epsilon})$ as close as we want to $\Vol(G,g_0)$.  
This means that  by choosing $\chi$ and $\eps$ suitably, we can make \eqref{fpas.13} as small as we want.

In the case of a conformal surgery at a point, we need to use the fact that $g_0$ is quasi-isometric to a hyperbolic cusp metric near the point $p$, so that by Lemma~\ref{del.1}, the Sobolev constant of $g_0$ on $G$ satisfies 
an estimate of the form
$$
        C_{G,g_0}\le K (e^{4d_0}+1)
$$
for some constant $K$.    
Thus, by choosing $G$ sufficiently far away from $N$, we can make the term $C_{G,g_0}e^{-\frac{d_0^2}{8t}}$
as small as we want.  Since a cusp end has finite area, this can be done in such a way   that the volume of $G$ with respect to $g_{\epsilon}$  is bounded above by a constant independent of $d_0$ and $\epsilon$.

Since on the other hand we can, for fixed $G$, make $C_{G,g_{\epsilon}}$ arbitrarily close to $C_{G,g_0}$ by taking $\epsilon$ 
sufficiently small, we see we can again make \eqref{fpas.13} as small as we want by taking $G$ sufficiently far away from $N$ and 
$\epsilon$ sufficiently small.

For the third term in \eqref{fpas.12}, we note that its integral with respect to $x$ on $N$ can be bounded by
\begin{equation}
C \int_0^t  \int_{G'} \left( \int_N | e^{-s\Delta_{g_0}}(x,z)|^2 dx\right)^{\frac12}
 \left( \int_N |\Delta_{g_{\eps}} e^{-(t-s)\Delta_{g_{\eps}}}(z,x)|^2 dx\right)^{\frac12} (1-e^{\varphi_{\eps}(z)}) dz ds.
\label{fpsa.14}\end{equation}
Since we are assuming $t\ge\nu>0$, each integral in $x$ can be uniformly bounded (for $G'$ fixed) using Lemma~\ref{don.1} with $d=0$.  Thanks to the term $(1-e^{\varphi_{\eps}})$, the overall expression can be made arbitrarily small by taking $\eps>0$ sufficiently small.
\end{proof}
These three lemmas can then be combined to give the proof of Theorem~\ref{fpsa.3}.
\begin{proof}[Proof of Theorem~\ref{fpsa.3}]
Given $T>0$ and $\delta>0$, we need to find $\eps_0>0$ such that when $\eps\in [0,\eps_0]$,
\[
\int_{M\setminus S} | (e^{-t\Delta_{g_\eps}}(x,x) - e^{-t\Delta_{h_\eps}}(x,x))-
(e^{-t\Delta_{g_0}}(x,x) - e^{-t\Delta_{h_0}}(x,x))|dx <\delta \quad \forall \ t\in (0,T].
\]
By Lemma~\ref{sta.1}, we can find $\nu>0$ such that this integral is smaller than $\frac{\delta}{3}$ for $t\le \nu$.
By Lemma~\ref{fpsa.4}, we can find $\eps_0>0$ and an open set $A\subset \cV$ containing $S$ such that the integral restricted to $A\setminus S$ is smaller than $\frac{\delta}{3}$ when $\eps\in [0,\eps_0]$ and $t\in (0,T]$.  On the other hand, choosing an open set $N\subset (M\setminus S)$ containing the complement of  $A$, we know by Lemma~\ref{fpas.8} that by taking $\eps_0$ smaller if needed, we can insure the integral restricted to $N$ is also bounded by $\frac{\delta}{3}$ for $\eps\in[0,\eps_0]$ and $t\in [\nu,T]$, from which the result follows.
\end{proof}

\section{The relative determinant}
By   Corollary~\ref{ed.1} and Corollary~\ref{relhin.1}, the relative zeta function given by
$$ \zeta(\Delta_{g_\epsilon}, \Delta_{h_{\epsilon}},s)= \frac{1}{\Gamma(s)} \int_0^{\infty}  t^{s-1}\Tr(e^{-t\Delta_{g_\eps}} - e^{-t\Delta_{h_\eps}}) dt
$$
 is well defined for $\Re s>1$. In fact, using the short-time asymptotic expansion \eqref{RelHeatInv}, the relative zeta function can be extended meromorphically to $s\in \bbC$ with at worst simple poles in $s$, but with $s=0$ a regular point.
Thus, a relative determinant can be defined by
\begin{equation}
    \det(\Delta_{g_{\epsilon}}, \Delta_{h_{\epsilon}})= \exp \left( - \zeta'(\Delta_{g_\epsilon}, \Delta_{h_{\epsilon}},0)  \right).
\label{RelDet}\end{equation}
\begin{lemma}
For $\epsilon>0$, we have
$$
  \frac{d}{d\epsilon}\det(\Delta_{g_{\epsilon}}, \Delta_{h_{\epsilon}})     =  0.
$$
\label{const.1}\end{lemma}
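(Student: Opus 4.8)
The plan is to differentiate the relative zeta function in $\epsilon$ under the integral sign and to exploit the fact that in dimension two the Laplacian is conformally covariant, so that the variation is governed by a conformal factor supported precisely where the two metrics agree. Fix $\epsilon_0>0$. For $\epsilon$ in a neighbourhood of $\epsilon_0$, the metrics $g_\epsilon=\psi_\epsilon g$ and $h_\epsilon=\psi_\epsilon h$ are genuine smooth $\Fcb$-metrics (the cusp or incomplete end at $S$ having been filled in), they still coincide off a compact set, and by Theorem~\ref{csp.1} they have a spectral gap bounded below uniformly in $\epsilon$; hence the relative zeta function and the relative determinant \eqref{RelDet} are defined and, by construction, depend smoothly on $(\epsilon,s)$ near $(\epsilon_0,0)$. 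It therefore suffices to prove that $\frac{d}{d\epsilon}\zeta'(\Delta_{g_\epsilon},\Delta_{h_\epsilon},0)=0$.

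\textbf{The variational identity.} Set $v_\epsilon=\log\psi_\epsilon$. Two-dimensional conformal covariance gives $\Delta_{g_\epsilon}=\psi_\epsilon^{-1}\Delta_{g}$ and $\Delta_{h_\epsilon}=\psi_\epsilon^{-1}\Delta_{h}$, so that $\frac{d}{d\epsilon}\Delta_{g_\epsilon}=-\dot v_\epsilon\Delta_{g_\epsilon}$ and $\frac{d}{d\epsilon}\Delta_{h_\epsilon}=-\dot v_\epsilon\Delta_{h_\epsilon}$ with the \emph{same} function $\dot v_\epsilon=\dot\psi_\epsilon/\psi_\epsilon$. The crucial point is that $\dot\psi_\epsilon$ vanishes outside $\cV$, so $\dot v_\epsilon\in\CI_c(M)$ with support in $\cV$, on which $g_\epsilon=h_\epsilon$; in particular $\dot v_\epsilon e^{-t\Delta_{g_\epsilon}}$ and $\dot v_\epsilon e^{-t\Delta_{h_\epsilon}}$ are separately trace class. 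Combining Duhamel's formula, cyclicity of the trace (legitimate thanks to the compact support of $\dot v_\epsilon$), and $\Delta e^{-t\Delta}=-\partial_t e^{-t\Delta}$ yields
\begin{equation*}
\frac{d}{d\epsilon}\Tr\!\left(e^{-t\Delta_{g_\epsilon}}-e^{-t\Delta_{h_\epsilon}}\right)=-t\,\frac{\partial}{\partial t}F_\epsilon(t),\qquad F_\epsilon(t):=\Tr\!\left(\dot v_\epsilon\bigl(e^{-t\Delta_{g_\epsilon}}-e^{-t\Delta_{h_\epsilon}}\bigr)\right).
\end{equation*}
Now $F_\epsilon(t)=\int_M\dot v_\epsilon(x)\bigl(e^{-t\Delta_{g_\epsilon}}(x,x)-e^{-t\Delta_{h_\epsilon}}(x,x)\bigr)\,d\mathrm{vol}$, and on $\supp\dot v_\epsilon$ the two smooth metrics agree, hence so do all the coefficients of their diagonal short-time heat expansions; therefore $F_\epsilon(t)=O(t^\infty)$ as $t\to 0^+$. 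By Theorem~\ref{csp.1} and Corollary~\ref{ed.1}, $F_\epsilon(t)$ tends to $0$ exponentially fast as $t\to\infty$. (In the exceptional case where $0$ lies in the spectrum — surfaces with cusps only — one argues with the Laplacians restricted to the orthogonal complement of the constants, exactly as in Corollary~\ref{ed.1}; the computation goes through, the only change being an extra term measuring the relative volume change, which is handled by the same normalization of the relative determinant used there and vanishes in the relatively isospectral setting of interest.)

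\textbf{The zeta computation.} Inserting the identity into the definition of the relative zeta function and integrating by parts in $t$ — the boundary terms vanishing by the decay of $F_\epsilon$ at $0$ and $\infty$ — gives, for $\Re s$ large and then by meromorphic continuation,
\begin{equation*}
\frac{d}{d\epsilon}\zeta(\Delta_{g_\epsilon},\Delta_{h_\epsilon},s)=\frac{1}{\Gamma(s)}\int_0^\infty t^{s-1}\Bigl(-t\,\partial_t F_\epsilon(t)\Bigr)\,dt=\frac{s}{\Gamma(s)}\int_0^\infty t^{s-1}F_\epsilon(t)\,dt .
\end{equation*}
Because $F_\epsilon$ vanishes to infinite order at $t=0$ and decays exponentially at $t=\infty$, its Mellin transform $I_\epsilon(s)=\int_0^\infty t^{s-1}F_\epsilon(t)\,dt$ is entire in $s$, so $\frac{d}{d\epsilon}\zeta(\Delta_{g_\epsilon},\Delta_{h_\epsilon},s)=\frac{s}{\Gamma(s)}I_\epsilon(s)$ vanishes to second order at $s=0$ (since $s/\Gamma(s)$ has a double zero there). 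Hence its $s$-derivative at $s=0$ vanishes, and interchanging the $s$- and $\epsilon$-derivatives we obtain $\frac{d}{d\epsilon}\zeta'(\Delta_{g_\epsilon},\Delta_{h_\epsilon},0)=0$, whence $\frac{d}{d\epsilon}\det(\Delta_{g_\epsilon},\Delta_{h_\epsilon})=-\det(\Delta_{g_\epsilon},\Delta_{h_\epsilon})\,\frac{d}{d\epsilon}\zeta'(\Delta_{g_\epsilon},\Delta_{h_\epsilon},0)=0$.

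\textbf{Main obstacle.} The steps that look routine but carry the real weight are the two interchanges of operations: differentiating under $\int_0^\infty dt$ for $\Re s$ large, and exchanging $\partial_s$ with $\partial_\epsilon$. Both rest on control of the relative heat trace and of its $\epsilon$-derivative that is uniform for $\epsilon$ near $\epsilon_0$: at large $t$ this is furnished by Theorem~\ref{csp.1}, Proposition~\ref{utn.1} and Corollary~\ref{ed.1}, while at small $t$ one needs the cancellation of local heat invariants on $\supp\dot v_\epsilon$ to hold uniformly in $\epsilon$, which it does since there the metrics $g_\epsilon,h_\epsilon$ all coincide pairwise and vary smoothly. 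This is also the reason the statement is restricted to $\epsilon>0$: at $\epsilon=0$ the marked point $p$ (or the boundary component $\pa_i M$) is pushed to infinity, $\dot v_\epsilon$ ceases to be a bounded compactly supported function, and the heat-invariant cancellation breaks down — continuity of the relative determinant down to $\epsilon=0$ is obtained separately from Theorem~\ref{fpsa.3} and the subsequent analysis of the relative zeta function.
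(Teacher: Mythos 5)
Your argument is correct, and it isolates exactly the cancellation mechanism the paper relies on---the conformal variation is driven by the single function $\dot v_\epsilon=\dot\psi_\epsilon/\psi_\epsilon$, supported in $\cV$ where $g_\epsilon=h_\epsilon$---but it gets there by a genuinely different route. The paper's proof is a one-line appeal to external machinery: it writes the relative determinant as a ratio of two regularized determinants in the sense of \cite{AAR} and invokes the Polyakov formula proved there for each factor, observing that the two anomaly contributions coincide and cancel. You never leave the relative framework: you derive the variational identity $\frac{d}{d\epsilon}\Tr(e^{-t\Delta_{g_\epsilon}}-e^{-t\Delta_{h_\epsilon}})=-t\,\partial_t F_\epsilon(t)$ from two-dimensional conformal covariance and Duhamel, and read off the vanishing of $\frac{d}{d\epsilon}\zeta'(\Delta_{g_\epsilon},\Delta_{h_\epsilon},0)$ from the double zero of $s/\Gamma(s)$ together with the regularity of the Mellin transform of $F_\epsilon$. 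What this buys is self-containedness: you avoid the renormalized trace/determinant formalism altogether and, in particular, the extension of the Polyakov formula of \cite{AAR} to surfaces with boundary that the paper must address in a separate remark. The price is that the trace-class and cyclicity manipulations and the two interchanges of derivative and integral must be justified by hand; you correctly identify these as the load-bearing steps, and the localization of $\dot v_\epsilon$ together with Proposition~\ref{utn.1} and Corollary~\ref{ed.1} does supply the needed uniform bounds. One point deserves more care than your parenthetical gives it: when $0$ is an eigenvalue (closed surfaces with only cusps), $F_\epsilon(t)\to\left(\Vol(M,g_\epsilon)^{-1}-\Vol(M,h_\epsilon)^{-1}\right)\int_M \dot v_\epsilon\,\dvol$ as $t\to\infty$, and carrying this through the Mellin transform yields $\frac{d}{d\epsilon}\log\det(\Delta_{g_\epsilon},\Delta_{h_\epsilon})=F_\epsilon(\infty)$, i.e.\ exactly the relative area term of the Polyakov formula. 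This vanishes only when $\Vol(M,g)=\Vol(M,h)$, equivalently $a_0(g,h)=0$---true in the relatively isospectral setting where the lemma is applied, but an honest hypothesis rather than something a normalization absorbs automatically. The paper's own terse proof is equally silent on this, so it is a shared caveat, not a defect specific to your argument.
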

\begin{proof}
Using the regularized trace as in \cite{AAR}, we can write the relative trace as a difference of two regularized traces.  Similarly, we can write the relative determinant as a quotient of two regularized determinants.  Applying the Polyakov formula of \cite{AAR} to this ratio of regularized determinants, we see that the contribution coming from one regularized determinant is canceled by the other, from which the result follows.
\end{proof}
\begin{remark}
In \cite{AAR} the surfaces considered have no boundary, but since the discussion about the regularized trace and the Polyakov formula is local near the cusps and the funnels,  the extension to the boundary case is automatic.  As in \cite{OPS1}, one simply needs to add an extra term in the Polyakov formula of \cite{AAR} expressed in terms of the geodesic curvature of the boundary.
\end{remark}

This gives immediately the following.
\begin{theorem}
For the families of metrics $g_{\epsilon}$ and $h_{\epsilon}$, the relative determinant $\det(\Delta_{g_{\epsilon}}, \Delta_{h_{\epsilon}})$ is independent of $\epsilon$.
\label{csp.3}\end{theorem}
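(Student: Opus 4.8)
The plan is to deduce the theorem from Lemma~\ref{const.1} together with continuity of the relative determinant at $\epsilon=0$. Since Lemma~\ref{const.1} shows that $\epsilon\mapsto\det(\Delta_{g_\epsilon},\Delta_{h_\epsilon})$ has vanishing derivative on the connected set $(0,1]$, it is constant there, and the theorem follows once we know
$$
\det(\Delta_{g_\epsilon},\Delta_{h_\epsilon})\longrightarrow \det(\Delta_{g_0},\Delta_{h_0})\quad\text{as }\epsilon\to 0^+,
$$
equivalently $\zeta'(\Delta_{g_\epsilon},\Delta_{h_\epsilon},0)\to\zeta'(\Delta_{g_0},\Delta_{h_0},0)$.

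To prove this, write $F_\epsilon(t)=\Tr(e^{-t\Delta_{g_\epsilon}}-e^{-t\Delta_{h_\epsilon}})$ and $R_\epsilon(t)=F_\epsilon(t)-F_0(t)$, so that for $\Re s>1$
$$
\zeta(\Delta_{g_\epsilon},\Delta_{h_\epsilon},s)-\zeta(\Delta_{g_0},\Delta_{h_0},s)=\frac{1}{\Gamma(s)}\int_0^\infty t^{s-1}R_\epsilon(t)\,dt.
$$
The key point is that the three preceding results control $R_\epsilon$ uniformly in $\epsilon\in[0,1]$: by Lemma~\ref{sta.1} there are constants $C,d>0$ with $|R_\epsilon(t)|\le Cte^{-d^2/8t}$ for all $t>0$ (in particular $R_\epsilon$ is rapidly decaying as $t\to 0^+$, uniformly, which is also why no polar terms survive, \cf Corollary~\ref{relhin.1}); by Corollary~\ref{ed.1} one has $|R_\epsilon(t)|\le Ke^{-\mu t/2}$ for $t\ge T$; and by Theorem~\ref{fpsa.3}, $\sup_{0<t\le T}|R_\epsilon(t)|\to 0$ as $\epsilon\to 0^+$. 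The first bound makes $\int_0^\infty t^{s-1}R_\epsilon(t)\,dt$ an entire function of $s$, so the displayed difference of zeta functions is entire; since $1/\Gamma$ has a simple zero at $s=0$ with $\big.\frac{d}{ds}\frac{1}{\Gamma(s)}\big|_{s=0}=1$, differentiating at $s=0$ gives
$$
\zeta'(\Delta_{g_\epsilon},\Delta_{h_\epsilon},0)-\zeta'(\Delta_{g_0},\Delta_{h_0},0)=\int_0^\infty t^{-1}R_\epsilon(t)\,dt.
$$

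It then suffices to show this integral tends to $0$ as $\epsilon\to 0^+$. Splitting $\int_0^\infty=\int_0^\nu+\int_\nu^T+\int_T^\infty$: on $(0,\nu]$ we bound $\int_0^\nu t^{-1}|R_\epsilon(t)|\,dt\le C\int_0^\nu e^{-d^2/8t}\,dt$, which is small for $\nu$ small, uniformly in $\epsilon$; on $[T,\infty)$ we bound $\int_T^\infty t^{-1}|R_\epsilon(t)|\,dt\le K\int_T^\infty t^{-1}e^{-\mu t/2}\,dt$, small for $T$ large, uniformly in $\epsilon$; and on the fixed interval $[\nu,T]$ we bound $\int_\nu^T t^{-1}|R_\epsilon(t)|\,dt\le \nu^{-1}(T-\nu)\sup_{0<t\le T}|R_\epsilon(t)|$, which tends to $0$ as $\epsilon\to 0^+$ by Theorem~\ref{fpsa.3}. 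Hence $\zeta'(\Delta_{g_\epsilon},\Delta_{h_\epsilon},0)\to\zeta'(\Delta_{g_0},\Delta_{h_0},0)$ and thus $\det(\Delta_{g_\epsilon},\Delta_{h_\epsilon})\to\det(\Delta_{g_0},\Delta_{h_0})$; combined with the constancy on $(0,1]$ from Lemma~\ref{const.1}, the relative determinant is independent of $\epsilon$ on $[0,1]$.

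The only delicate point is the passage from trace estimates to the convergence of $\zeta'$ at $s=0$: one needs the rapid decay of $R_\epsilon$ near $t=0$ to be uniform in $\epsilon$ (not merely valid for each fixed $\epsilon$), so that the $t\to 0^+$ contribution to $\int_0^\infty t^{-1}R_\epsilon(t)\,dt$ is uniformly small and can be estimated independently of the $\epsilon\to 0$ limit. But this is exactly the content of Lemma~\ref{sta.1}, and Corollary~\ref{ed.1} does the same for the $t\to\infty$ tail, so the argument is essentially an assembly of the three preceding results with the elementary expansion of $1/\Gamma$ near $s=0$.
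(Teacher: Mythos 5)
Your proposal is correct and follows essentially the same route as the paper: the paper's proof of Theorem~\ref{csp.3} likewise combines Lemma~\ref{const.1} (constancy for $\epsilon>0$) with continuity at $\epsilon=0$ deduced from Corollary~\ref{ed.1}, Lemma~\ref{sta.1} and Theorem~\ref{fpsa.3}. You have merely written out in full the uniform small-time, large-time and intermediate-time estimates and the elementary expansion of $1/\Gamma$ at $s=0$ that the paper leaves implicit, and these details check out.
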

\begin{proof}
By Corollary~\ref{ed.1}, Lemma~\ref{sta.1} and Theorem~\ref{fpsa.3}, the relative determinant is a continuous function of $\epsilon$.  On the other hand, by Lemma~\ref{const.1}, it is constant for $\epsilon>0$.  By continuity, it is therefore constant for $\epsilon\ge 0$.
\end{proof}

We can use this to obtain a similar result for conformal deformations near $\pa_F\overline{M}$.
\begin{corollary}
Let $\psi_F\in \CI(\overline{M})$ be a smooth function supported near $\pa_F\overline{M}$ such that $\left. \psi_F\right|_{\pa_F\overline{M}}$ is locally constant and consider the new $\Fcb$-metrics
$\widetilde{g}=e^{\psi_F}g$, $\widetilde{h}=e^{\psi_F}h$.  Then we have that
$$
        \det(\Delta_{\widetilde{g}},\Delta_{\widetilde{h}})= \det(\Delta_g,\Delta_h).
$$
\label{csp.4}\end{corollary}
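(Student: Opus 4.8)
The plan is to interpolate along the conformal factor and re-use the mechanism behind Lemma~\ref{const.1}. For $s\in[0,1]$ set $g_s=e^{s\psi_F}g$ and $h_s=e^{s\psi_F}h$. First I would verify that each $g_s$ and $h_s$ is again a $\Fcb$-metric. Since $\psi_F$ is supported near $\pa_F\overline{M}$ it vanishes near the marked points $p_i$ and near $\pa_b\overline{M}$, so Definition~\ref{Fcb.1}(ii) holds unchanged and the Dirichlet problem on $\pa M=\pa_b\overline{M}$ is untouched; near $\pa_F\overline{M}$ the conformal factor in Definition~\ref{Fcb.1}(i) becomes $\varphi_F+s\psi_F$, whose restriction to $\pa_F\overline{M}\times\{0\}$ is still locally constant because both $\varphi_F$ and $\psi_F$ are. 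Moreover $g=h$ near $\pa_F\overline{M}$ (the metrics being compared agree outside a compact set), so $g_s=h_s$ on all of $\supp\psi_F$, and in particular $g_s=h_s$ outside a compact set. Hence $e^{-t\Delta_{g_s}}-e^{-t\Delta_{h_s}}$ is trace class with the usual short-time expansion, and because every $\Fcb$-metric has a punctured neighbourhood of $0$ disjoint from its spectrum (as was used for Corollary~\ref{ed.1}), the relative zeta function and the relative determinant $\det(\Delta_{g_s},\Delta_{h_s})$ are defined for every $s$, exactly as in the construction of \eqref{RelDet}.

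Next I would show that $\frac{d}{ds}\det(\Delta_{g_s},\Delta_{h_s})=0$ for all $s\in[0,1]$, which is precisely the computation proving Lemma~\ref{const.1}. Following \cite{AAR}, write the relative determinant as a ratio of regularized determinants, $\det(\Delta_{g_s},\Delta_{h_s})=\Det_R(\Delta_{g_s})/\Det_R(\Delta_{h_s})$, and differentiate using the Polyakov formula of \cite{AAR}, augmented near $\pa_b\overline{M}$ by the geodesic-curvature boundary term of \cite{OPS1}. The quantity $\frac{d}{ds}\log\Det_R(\Delta_{g_s})$ is a sum of local contributions: a bulk integral pairing $\psi_F$ against a curvature density of $g_s$, together with terms localised near the cusps, near $\pa_F\overline{M}$, and near $\pa_b\overline{M}$. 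The bulk integral only involves $\supp\psi_F$, where $g_s=h_s$; the cusp and $\pa_b\overline{M}$ terms carry the factor $\psi_F\equiv 0$ there; and the $\pa_F\overline{M}$ term depends only on the germ at $\pa_F\overline{M}$ of the metric and of $\psi_F$, where once more $g_s=h_s$. Thus the $g_s$-contribution cancels the $h_s$-contribution term by term, so $\frac{d}{ds}\log\det(\Delta_{g_s},\Delta_{h_s})=0$.

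To finish: unlike the conformal-surgery families of the previous sections, where $\eps\to 0$ is a genuinely singular limit requiring the continuity results of Theorem~\ref{fpsa.3} and Corollary~\ref{ed.1}, the family $s\mapsto(g_s,h_s)$ is a smooth family of $\Fcb$-metrics with no degeneration, so $s\mapsto\det(\Delta_{g_s},\Delta_{h_s})$ is continuous on $[0,1]$ (the continuity inputs of the previous sections apply here \emph{a fortiori}, or one checks it directly by perturbation theory). A continuous function on $[0,1]$ whose derivative vanishes on $(0,1)$ is constant, so $\det(\Delta_{g_1},\Delta_{h_1})=\det(\Delta_{g_0},\Delta_{h_0})$, that is $\det(\Delta_{\widetilde g},\Delta_{\widetilde h})=\det(\Delta_g,\Delta_h)$. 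The main obstacle is the same one underlying Lemma~\ref{const.1}: one must know that the cusp- and funnel-end contributions in the Polyakov formula of \cite{AAR} are genuinely local in the metric and conformal factor near those ends, so that they cancel in the ratio once $g_s=h_s$ there. Granting this locality --- which is exactly what \cite{AAR} provides for the regularized trace and the Polyakov formula --- the remaining steps are routine.
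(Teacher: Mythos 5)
Your route is exactly the one the paper considers and then declines: the authors write that one ``could use the Polyakov formula of \cite{AAR}, but this would require some extra decay behavior in $\psi_F$.'' The gap sits at the central step of your argument, where you differentiate $\log\Det_R(\Delta_{g_s})$ in $s$ using the Polyakov formula for the regularized determinant. In Lemma~\ref{const.1} that formula is applied to the variations $\epsilon\mapsto\psi_\epsilon$, whose logarithms are (for $\epsilon>0$) supported in a region at finite distance, away from the complete funnel ends; there the variation formula is genuinely local and the cancellation between the $g$- and $h$-contributions is unproblematic. Here, by contrast, $\psi_F$ is supported \emph{at} the funnel ends and restricts to a (generally non-zero) locally constant function on $\pa_F\overline{M}$, so the conformal factor does not decay at funnel infinity. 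The regularized determinant of \cite{AAR} is built from a renormalized trace whose very definition depends on the asymptotic structure at infinity; a non-decaying conformal factor changes that structure (e.g.\ it shifts the bottom of the continuous spectrum from $e^{-c_Y}/4$), and the Polyakov variation formula is not available in that generality without additional hypotheses. So the assertion that the end contributions ``cancel term by term because $g_s=h_s$ there'' presupposes that each individual variation formula is valid, which is precisely what fails (or at least is not established) for this $\psi_F$.

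The paper's actual proof avoids this entirely: it applies Theorem~\ref{csp.3} twice, first \emph{undoing} a conformal surgery at each boundary component of $\pa_F\overline{M}$ (running the surgery family backwards from the funnel at $\epsilon=0$ to a finite cylinder at $\epsilon=1$) and then \emph{redoing} it with the weight adjusted by $\psi_F$, so as to land on $\widetilde g=e^{\psi_F}g$ and $\widetilde h=e^{\psi_F}h$. Along each surgery family the relative determinant is constant by Theorem~\ref{csp.3}, whose proof only ever invokes Polyakov for the compactly supported (finite-distance) part of the variation and handles the degenerate endpoint $\epsilon=0$ by the continuity results (Theorem~\ref{fpsa.3}, Corollary~\ref{ed.1}). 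Your interpolation $g_s=e^{s\psi_F}g$ could in principle be repaired, but only by supplying a Polyakov-type formula valid for conformal factors that are merely locally constant, rather than decaying, at $\pa_F\overline{M}$ --- which is exactly the extra input the paper's detour is designed to avoid.
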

\begin{proof}
We could use the Polyakov formula of \cite{AAR}, but this would require some extra decay behavior in $\psi_F$.  Instead, we simply apply the previous theorem twice by undoing and doing again a conformal surgery at each boundary component of $\pa_F\overline{M}$ to go from $g$ to $\widetilde{g}$ and from $h$ to $\widetilde{h}$.
\end{proof}

%%%%%%%%%%%%%%%%%%%%%%%%%%%%%
\section{Compactness of families of relatively isospectral surfaces} \label{sec:CsRelIso}
%%%%%%%%%%%%%%%%%%%%%%%%%%%%%

As mentioned in the introduction, Borthwick and Perry \cite{Borthwick-Perry} prove a compactness theorem for isoresonant metrics that coincide cocompactly and whose ends are hyperbolic funnels. Their proof of compactness, like the proof of compactness of Osgood-Phillips-Sarnak, uses the spectral assumption only through the equality of the relative heat invariants \eqref{RelHeatInv} and relative determinants \eqref{RelDet}. We restate their theorem with these assumptions.

\begin{theorem}[Borthwick-Perry \cite{Borthwick-Perry}] \label{ExtendedBP}
Let $(M_i,g_i)$ be a family of Riemannian surfaces that coincide cocompactly,
whose ends are hyperbolic funnels,
and assume that the relative heat invariants  and relative determinants  satisfy
\begin{equation*}
    a_k(g_i, g_j) = 0, \quad \det(\Delta_{g_i}, \Delta_{g_j}) = 1, \quad \Mforall i,j,k.
\end{equation*}
Then there is a Riemannian surface $(M, g_{\infty}),$ a subsequence $(M_{i_k},g_{i_k}),$ and a sequence of diffeomorphisms
\begin{equation*}
    \phi_k: M \lra M_{i_k},
    \Mwith \phi_{\ell}\circ \phi_{\ell'}^{-1}\rest{\cU_{\infty}} = \Id \text{ for any } \ell, \ell'
\end{equation*}
such that the metrics $\phi_k^*g_{i_k}$ converge to $g_{\infty}$ in $\CI.$
\end{theorem}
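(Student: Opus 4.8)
The plan is to observe that this is, up to the stated weakening of hypotheses, the main compactness theorem of Borthwick and Perry \cite{Borthwick-Perry}, and to check that the isoresonance hypothesis enters their proof only to the extent of the relative heat invariants and the relative determinant. Recall that for metrics whose ends are hyperbolic funnels the Poisson formula \cite[Theorem~2.1]{Borthwick-Perry} shows that the resonance set determines the renormalized trace of the wave kernel, hence the renormalized trace of the heat kernel; since the relative heat trace $\Tr(e^{-t\Delta_{g_i}}-e^{-t\Delta_{g_j}})$ is the difference of these renormalized traces, isoresonance forces it to vanish identically in $t$, and hence forces $a_k(g_i,g_j)=0$ for all $k$ together with $\det(\Delta_{g_i},\Delta_{g_j})=1$, the relative zeta function and the relative determinant being manufactured from the relative heat trace. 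One then checks that Borthwick and Perry's argument invokes isoresonance only through these last two facts, so that it applies verbatim under the hypotheses of the statement.

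Next I would recall the structure of that argument, which follows the pattern of Osgood--Phillips--Sarnak. One verifies the hypotheses of a Cheeger--Gromov compactness theorem on a fixed compact neighbourhood of the region where the metrics vary: a uniform lower bound on the injectivity radius and on the volume, together with uniform pointwise bounds on the curvature and all its covariant derivatives, produce a subsequence converging in $\CI$ after pullback by diffeomorphisms. The volume bound and uniform bounds on the Sobolev norms of the curvature follow from the vanishing of the relative heat invariants \eqref{RelHeatInv}: each $a_k(g_i,g_j)$ is the integral over the varying region of a universal polynomial in the curvature of $g_i$, its covariant derivatives and the metric, minus the corresponding quantity for $g_j$, so the fixed geometry on $\cU_\infty$ together with the vanishing of all $a_k$ pins down these quantities exactly as in \cite{OPS2}. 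The lower bound on the injectivity radius comes, on surfaces, from the lower bound on the relative determinant \eqref{RelDet}, by the determinant-of-Laplacian argument of \cite{Borthwick-Judge-Perry:DetLap, OPS2}; this is the analytic core and it uses only the determinant hypothesis. Sobolev embedding then upgrades the $L^2$ bounds on the curvature to the required pointwise bounds, and one arranges the diffeomorphisms $\phi_k$ to equal the identity on $\cU_\infty$, which is possible since all the $g_i$ already agree with a fixed metric there.

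The step I expect to be the main obstacle is not a new argument but the careful transcription of this scheme to the present relative, non-compact setting: one must ensure that the fixed hyperbolic-funnel ends supply enough ``background'' that bounds on the varying region alone feed the Cheeger--Gromov theorem, and that the diffeomorphisms it produces can be modified to be the identity near infinity without spoiling $\CI$ convergence. Both points are handled exactly as in \cite{Borthwick-Perry}; the only thing genuinely needed here is the observation, recorded above, that the spectral hypothesis enters their proof only through \eqref{RelHeatInv} and \eqref{RelDet}.
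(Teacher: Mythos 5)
Your proposal is correct and matches the paper's own treatment: the paper does not reprove the Borthwick--Perry theorem but simply observes, exactly as you do, that their argument uses isoresonance only through the vanishing of the relative heat invariants and the triviality of the relative determinant, and therefore goes through under the stated hypotheses. Your additional exposition of the Osgood--Phillips--Sarnak/Cheeger--Gromov structure of the underlying argument is accurate and consistent with the citations the paper relies on.
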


We can now finally give a proof of our main result.

\begin{proof}[Proof of Theorem~\ref{thm:RelIso}]
We need first to check that the various surfaces have the same topology.  This can be deduced from the relative heat invariants.  Namely, given two of the relatively isospectral surfaces $(M_i,g_i)$ and $(M_k,g_k)$, we can deform conformally $g_i$ and $g_k$ in $\cU_\infty$ in the same way to obtain metrics $\hat{g}_i$ and $\hat{g}_k$ also having vanishing relative heat invariants and such that,
\begin{itemize}
\item[(i)] the metrics $\hat{g}_i$ and $\hat{g}_k$ have no cusp, the cusps being removed by a conformal surgery at each marked point;
\item[(ii)] The metrics $\hat{g}_i$, $\hat{g}_k$ define incomplete metrics on $\overline{M}_i$ and $\overline{M}_k$ with boundary having no geodesic curvature.
\end{itemize}
Since these metrics $\hat{g}_i$, $\hat{g}_k$ have the same heat invariants, we know from \cite{McKean-Singer1967} that
$\overline{M}_i$ and $\overline{M}_k$ have the same Euler characteristic, so the same topology.  Thus, the isometry $M_i\setminus K_i\to M_k\setminus K_k$ can be extended to a diffeomorphism $M_i\to M_k$.  This means we can assume all the relatively isospectral metrics are defined on the same surface $M_\infty$ and agree on $\cU_\infty = M\setminus K$ where $K\subset M\setminus \pa M$ is a compact set.

Going back to the initial metrics $g_i$, we can, by doing a conformal surgery on $\cU_{\infty}$, remove all the cusps and transform each boundary into a funnel end hyperbolic near infinity.   We can also modify conformally the metrics in the remaining funnels to make them hyperbolic near infinity.  We thus get a new sequence of metrics $\widetilde{g}_i= e^{\psi} g_i$ on $\overline{M}$, where $\psi$ is a function supported on $\cU_{\infty}$ and $\overline{M}$ is obtained from $M$ by filling each cusp end with a point.      In doing so, the relative local heat invariants remains zero, and the relative determinant remains $1$ by Theorem~\ref{csp.3}.  We can thus apply Theorem~\ref{ExtendedBP} to find a Riemannian surface $(\overline{M}, \widetilde{g}_{\infty})$ and to extract a subsquence $\widetilde{g}_{i_k}$ and a sequence of diffeomorphisms $\phi_{i_k}: \overline{M} \to M_{i_k}$ such that $\phi_{i_k}^*g_{i_k}$ converges to $\widetilde{g}_{\infty}$ in $\CI(\overline{M}, \widetilde{g}_{\infty})$.  As explained in \cite{Borthwick-Perry}, we can choose the diffeomorphisms such that $\phi_{i_k}\circ\phi_{i_{k'}}^{-1}=\Id$ near each funnels of $(\overline{M},\widetilde{g}_{\infty})$.  The same argument can be carried out near each (filled) cusp, so that the diffeomorphisms can be chosen so that $\phi_{i_k}\circ\phi_{i_{k'}}^{-1}=\Id$ on $\cU_{\infty}$.  Undoing the conformal transformation on the metric $\widetilde{g}_{\infty}$ to obtain the metic $g_{\infty}= e^{-\psi}\widetilde{g}_{\infty}$ on $M$, we obtain the desired result with the subsequence $g_{i_k}$ and the diffeomorphisms $\phi_{i_k}$.
\end{proof}

\bibliography{Conformal_surgery}

\def\cprime{$'$} \def\cprime{$'$} \def\cdprime{$''$} \def\cprime{$'$}
  \def\cprime{$'$} \def\cprime{$'$} \def\cprime{$'$} \def\bud{$''$}
  \def\cprime{$'$} \def\cprime{$'$} \def\cprime{$'$} \def\cprime{$'$}
  \def\cprime{$'$} \def\cprime{$'$} \def\cprime{$'$} \def\cprime{$'$}
  \def\cprime{$'$} \def\cprime{$'$}
  \def\polhk#1{\setbox0=\hbox{#1}{\ooalign{\hidewidth
  \lower1.5ex\hbox{`}\hidewidth\crcr\unhbox0}}} \def\cprime{$'$}
  \def\cprime{$'$} \def\cprime{$'$} \def\cprime{$'$} \def\cprime{$'$}
  \def\cprime{$'$} \def\cprime{$'$} \def\cprime{$'$} \def\cprime{$'$}
  \def\cprime{$'$} \def\cprime{$'$} \def\cprime{$'$} \def\cprime{$'$}
  \def\cprime{$'$} \def\cprime{$'$} \def\cprime{$'$} \def\cprime{$'$}
  \def\cprime{$'$} \def\cprime{$'$} \def\cprime{$'$} \def\cprime{$'$}
  \def\cprime{$'$} \def\cprime{$'$} \def\cprime{$'$} \def\cprime{$'$}
  \def\cprime{$'$} \def\cprime{$'$}
\begin{thebibliography}{OPS88b}

\bibitem[AAR]{AAR}
Pierre Albin, Clara Aldana, and Fr\'ed\'eric Rochon.
\newblock Ricci flow and the determinant of the {L}aplacian on non-compact
  surfaces.
\newblock to appear in Comm. PDE.

\bibitem[Ald10]{Aldana:Isoresonant}
Clara Aldana.
\newblock Isoresonant conformal surfaces with cusps and boundedness of the
  relative determinant.
\newblock {\em Comm. Anal. Geom.}, 18(5):1009--1048, 2010.

\bibitem[And91]{Anderson}
Michael~T. Anderson.
\newblock Remarks on the compactness of isospectral sets in low dimensions.
\newblock {\em Duke Math. J.}, 63(3):699--711, 1991.

\bibitem[BG94]{Brooks-Glezen}
Robert Brooks and Paul Glezen.
\newblock An {$L^p$} spectral bootstrap theorem.
\newblock In {\em Geometry of the spectrum ({S}eattle, {WA}, 1993)}, volume 173
  of {\em Contemp. Math.}, pages 89--97. Amer. Math. Soc., Providence, RI,
  1994.

\bibitem[BJP03]{Borthwick-Judge-Perry:DetLap}
David Borthwick, Chris Judge, and Peter~A. Perry.
\newblock Determinants of {L}aplacians and isopolar metrics on surfaces of
  infinite area.
\newblock {\em Duke Math. J.}, 118(1):61--102, 2003.

\bibitem[BP11]{Borthwick-Perry}
David Borthwick and Peter~A. Perry.
\newblock Inverse scattering results for metrics hyperbolic near infinity.
\newblock 2011.

\bibitem[BPP92]{Brooks-Perry-Petersen}
Robert Brooks, Peter Perry, and Peter Petersen, V.
\newblock Compactness and finiteness theorems for isospectral manifolds.
\newblock {\em J. Reine Angew. Math.}, 426:67--89, 1992.

\bibitem[BPY89]{Brooks-Perry-Yang}
Robert Brooks, Peter Perry, and Paul Yang.
\newblock Isospectral sets of conformally equivalent metrics.
\newblock {\em Duke Math. J.}, 58(1):131--150, 1989.

\bibitem[Bun92]{Bunke1992}
U.~Bunke.
\newblock Relative {I}ndex {T}heory.
\newblock {\em J. Funct. Anal.}, 105:63--76, 1992.

\bibitem[Car02]{Carron:DetRel}
Gilles Carron.
\newblock D\'eterminant relatif et la fonction {X}i.
\newblock {\em Amer. J. Math.}, 124(2):307--352, 2002.

\bibitem[CGT82]{CGT1982}
J.~Cheeger, M.~Gromov, and M.~Taylor.
\newblock Finite propagation speed, kernel estimates for functions of the
  laplace operator, and the geometry of complete {R}iemannian manifolds.
\newblock {\em J. Differential Geom.}, 17:15--53, 1982.

\bibitem[CQ97]{Chang-Qing}
Sun-Yung~A. Chang and Jie Qing.
\newblock The zeta functional determinants on manifolds with boundary. {II}.
  {E}xtremal metrics and compactness of isospectral set.
\newblock {\em J. Funct. Anal.}, 147(2):363--399, 1997.

\bibitem[CX96]{Chen-Xu}
Roger Chen and Xingwang Xu.
\newblock Compactness of isospectral conformal metrics and isospectral
  potentials on a {$4$}-manifold.
\newblock {\em Duke Math. J.}, 84(1):131--154, 1996.

\bibitem[CY90]{Chang-Yang}
Sun-Yung~A. Chang and Paul C.-P. Yang.
\newblock Isospectral conformal metrics on {$3$}-manifolds.
\newblock {\em J. Amer. Math. Soc.}, 3(1):117--145, 1990.

\bibitem[Don87]{Donnelly1987}
Harold Donnelly.
\newblock Essential spectrum and heat kernel.
\newblock {\em J. Funct. Anal.}, 75:326--381, 1987.

\bibitem[Gil88]{Gilkey:Leading}
Peter~B. Gilkey.
\newblock Leading terms in the asymptotics of the heat equation.
\newblock In {\em Geometry of random motion ({I}thaca, {N}.{Y}., 1987)},
  volume~73 of {\em Contemp. Math.}, pages 79--85. Amer. Math. Soc.,
  Providence, RI, 1988.

\bibitem[GPS]{Gordon-Perry-Schueth}
Carolyn Gordon, Peter Perry, and Dorothee Schueth.
\newblock Isospectral and isoscattering manifolds: a survey of techniques and
  examples.
\newblock In {\em Geometry, spectral theory, groups, and dynamics}, volume 387
  of {\em Contemp. Math.}, pages 157--179.

\bibitem[Gui05]{Guillarmou:Mero}
Colin Guillarmou.
\newblock Meromorphic properties of the resolvent on asymptotically hyperbolic
  manifolds.
\newblock {\em Duke Math. J.}, 129(1):1--37, 2005.

\bibitem[GZ95]{Guillope-Zworski}
Laurent Guillop{\'e} and Maciej Zworski.
\newblock Upper bounds on the number of resonances for non-compact {R}iemann
  surfaces.
\newblock {\em J. Funct. Anal.}, 129(2):364--389, 1995.

\bibitem[HZ99]{Hassell-Zelditch}
Andrew Hassell and Steve Zelditch.
\newblock Determinants of {L}aplacians in exterior domains.
\newblock {\em Internat. Math. Res. Notices}, (18):971--1004, 1999.

\bibitem[Kim08]{Kim2008}
Young-Heon Kim.
\newblock Surfaces with boundary: their uniformizations, determinants of
  {L}aplacians, and isospectrality.
\newblock {\em Duke Math. J.}, 144(1):73--107, 2008.

\bibitem[Mel83]{Melrose:Drumheads}
Richard~B. Melrose.
\newblock Isospectral sets of drumheads are compact in
  {$\mathcal{C}^{\infty}$}.
\newblock Unpublished preprint from MSRI, 048-83 Available online at
  {http://math.mit.edu/$\sim$rbm/paper.html}, 1983.

\bibitem[Mel94]{Melrose:Scattering}
Richard~B. Melrose.
\newblock Spectral and scattering theory for the {L}aplacian on asymptotically
  {E}uclidian spaces.
\newblock In {\em Spectral and scattering theory ({S}anda, 1992)}, volume 161
  of {\em Lecture Notes in Pure and Appl. Math.}, pages 85--130. Dekker, New
  York, 1994.

\bibitem[MM87]{Mazzeo-Melrose:Zero}
Rafe Mazzeo and Richard~B. Melrose.
\newblock Meromorphic extension of the resolvent on complete spaces with with
  asymptotically negative curvature.
\newblock {\em J. Funct. Anal.}, pages 260--310, 1987.

\bibitem[MS67]{McKean-Singer1967}
H.P. McKean and I.M. Singer.
\newblock Curvature and the eigenvalues of the laplacian.
\newblock {\em J. Differential Geometry}, 1:43--69, 1967.

\bibitem[OPS88a]{OPS4}
Brian Osgood, Ralph Phillips, and Peter Sarnak.
\newblock Compact isospectral sets of plane domains.
\newblock {\em Proc. Nat. Acad. Sci. U.S.A.}, 85(15):5359--5361, 1988.

\bibitem[OPS88b]{OPS2}
Brian Osgood, Ralph Phillips, and Peter Sarnak.
\newblock Compact isospectral sets of surfaces.
\newblock {\em J. Funct. Anal.}, 80(1):212--234, 1988.

\bibitem[OPS88c]{OPS1}
Brian Osgood, Ralph Phillips, and Peter Sarnak.
\newblock Extremals of determinants of {L}aplacians.
\newblock {\em J. Funct. Anal.}, 80(1):148--211, 1988.

\bibitem[OPS89]{OPS3}
Brian Osgood, Ralph Phillips, and Peter Sarnak.
\newblock Moduli space, heights and isospectral sets of plane domains.
\newblock {\em Ann. of Math.}, 129:293--362, 1989.

\bibitem[Zho97]{Zhou}
Gengqiang Zhou.
\newblock Compactness of isospectral compact manifolds with bounded curvatures.
\newblock {\em Pacific J. Math.}, 181(1):187--200, 1997.

\end{thebibliography}
\bibliographystyle{alpha}

\end{document}